\newtheorem{thm}{Theorem}[section]
\newtheorem{lem}[thm]{Lemma}
\newtheorem{cor}[thm]{Corollary}
\newtheorem{prop}[thm]{Proposition}
\theoremstyle{definition}
\newtheorem{rem}[thm]{Remark}
\newtheorem{defn}[thm]{Definition}
\theoremstyle{remark}
\renewcommand{\arraystretch}{1.2}
\def\GL{\mathrm{GL}}
\def\E{\mathcal{E}}
\def\O{\mathcal{O}}
\def\P{\mathbb{P}^1}
\def\Q{\mathbf{Q}}
\def\Z{\mathbf{Z}}
\def\C{\mathbf{C}}
\def\F{\mathbf{F}}
\def\red{\mathrm{red}}
\def\genus{\mathrm{genus}}
\def\Gal{\mathrm{Gal}}
\def\Aut{\mathrm{Aut}}
\def\det{\mathrm{det}}
\def\ord{\mathrm{ord}}
\def\trace{\mathrm{trace}}
\def\GL2{\mathrm{GL}_2}
\def\PGL2{\mathrm{PGL}_2}
\def\End{\mathrm{End}}
\def\loc{\mathrm{loc}}
\def\tors{\mathrm{tors}}
\def\rk{\mathrm{rank}}
\def\genus{\mathrm{genus}}
\def\Pic{\mathrm{Pic}}
\def\Div{\mathrm{Div}}
\def\Jac{\mathrm{Jac}}
\def\im{\mathrm{image}}
\def\isom{\xrightarrow{\sim}}
\def\map#1{\;\xrightarrow{#1}\;}
\def\ksep{k^{\mathrm{s}}}
\def\hookto{\hookrightarrow}
\def\Deltamin{\Delta_{\mathrm{min}}}
\def\bmu{\boldsymbol{\mu}}
\def\ch{\mathrm{char}}
\def\too{\longrightarrow}
\newcommand{\BP}{{\mathbb P}}
\def\Pic{\mathrm{Pic}}
\title
{On elliptic curves with an isogeny of degree 7}
\author{R. Greenberg}
\address{Department of Mathematics, University of Washington, Seattle, WA 98195, USA}
\email{greenber@math.washington.edu}
\author{K. Rubin}
\address{Department of Mathematics, UC Irvine, Irvine, CA 92697, USA}
\email{krubin@math.uci.edu}
\author{A. Silverberg}
\address{Department of Mathematics, UC Irvine, Irvine, CA 92697, USA}
\email{asilverb@math.uci.edu}
\author{M. Stoll}
\address{Mathematisches Institut,
         Universit\"at Bayreuth,
         95440 Bayreuth, Germany}
\email{Michael.Stoll@uni-bayreuth.de}
\subjclass[2000]{11G05 (primary), 11D45, 11F80, 11G18, 11G30, 14G05 (secondary)}
\keywords{elliptic curves, isogeny, explicit families, Galois representations, method of Chabauty}
\thanks{This material is based upon work supported by the 
National Science Foundation under grants DMS-0200785, DMS-0757807, and CNS-0831004.}
\begin{document}

\begin{abstract}  
We show that if $E$ is an elliptic curve over $\Q$ with a $\Q$-rational isogeny of degree $7$, 
then the image of 
the $7$-adic Galois representation attached to $E$ is as large as allowed by the 
isogeny, except for the curves with complex multiplication by $\Q(\sqrt{-7})$.

The analogous result with $7$ replaced by a prime $p > 7$ 
was proved by the first author in \cite{greenberg}.
The present case $p = 7$ has additional interesting complications.  
We show that any exceptions correspond to the rational points on 
a certain curve of genus $12$.   We then use the method of Chabauty
to show that the exceptions are exactly the curves with complex multiplication. 

As a by-product of one of the key steps in our proof, 
we determine exactly when there exist elliptic curves over an arbitrary field $k$ 
of characteristic not $7$ 
with a $k$-rational 
isogeny of degree $7$ and a specified Galois action on the kernel of the isogeny,  
and we give a parametric description of such curves.  
\end{abstract}

\maketitle


\section{Introduction}  
\label{introsect}

Suppose that $E$ is an elliptic curve defined over $\Q$,   $p$ is a rational prime, 
$T_p(E)$ is the $p$-adic Tate module of $E$, and $G_\Q = \Gal(\bar{\Q}/\Q)$.
Then there is a natural homomorphism  
$$
\rho_{E,p}:  G_{\Q} \too \Aut_{\Z_p}(T_p(E))
$$
giving the action of $G_{\Q}$ on $T_p(E)$.  
Since $T_p(E)$ is a free $\Z_p$-module of rank $2$, $\Aut_{\Z_p}(T_p(E))$ can be identified 
(non-canonically) with $\GL2(\Z_p)$.
Serre \cite{serre} showed that if $E$ does not have complex multiplication (CM), then
$\rho_{E,p}(G_{\Q})$ has finite index in $\Aut(T_p(E))$, and
$\rho_{E,p}$ is surjective for all but finitely many primes $p$.

Suppose now that $E$ has an isogeny of degree $p$ that is defined over $\Q$ 
(in other words, $E$ has a $\Q$-rational $p$-isogeny). 
Mazur \cite{mazur} showed that 
$p$ is then in the finite set $\{ 2,3,5,7,11,13,17,19,37,43,67,163 \}$,
and if further $E$ is non-CM, then $p \in \{ 2,3,5,7,11,13,17,37 \}$.
The kernel of the isogeny is a $\Q$-rational subgroup $\Psi$ of 
$E(\bar{\Q})$ of order $p$.   
Since $\Aut(\Psi) \cong \F_p^\times$, 
the action of $G_{\Q}$ on $\Psi$ is given by a homomorphism 
$\psi: G_{\Q} \to \F^{\times}_p$. 
We refer to $\psi$ as the {\em character} of the isogeny.  

The isogeny and the corresponding character $\psi$ put 
an obvious constraint on the image of the map $\rho_{E,p}$. 
In particular, $\rho_{E,p}$ cannot be surjective.
If $E$ has complex multiplication, 
then the additional endomorphisms of $E$ put  another constraint on the image of $\rho_{E,p}$.  In that case, $\rho_{E,p}(G_\Q)$ is a $p$-adic Lie group of dimension 
$2$.  
We wish to understand whether these are the only constraints, or equivalently, whether there are 
any non-CM elliptic curves over $\Q$ for which $\rho_{E,p}(G_\Q)$ does not contain a 
Sylow pro-$p$ subgroup of $\Aut_{\Z_p}(T_p(E))$.  This is the motivation for the following 
definition.

\begin{defn}
\label{excdef}
We will say that a curve $E$ over $\Q$ is {\em $p$-exceptional} if $E$ has an isogeny of degree $p$ 
defined over $\Q$  and the image of $\rho_{E,p}$ does {\em not} contain a Sylow pro-$p$ subgroup of 
$\Aut_{\Z_p}(T_p(E))$.
\end{defn}

In other words, a curve $E$ with a $\Q$-rational $p$-isogeny
is $p$-exceptional if  the index of  $\rho_{E,p}(G_\Q)$ in $\Aut_{\Z_p}(T_p(E))$ is divisible by $p$.  
If $E$ is not $p$-exceptional, 
then the image of $\rho_{E,p}$ is as large as it could be, given the existence of a 
$\Q$-isogeny for $E$ of degree $p$ with character $\psi$. 
 Note that 
if $E$ is $p$-exceptional, then 
so is any curve $\Q$-isogenous to $E$, and if $p>2$ then 
so is any quadratic twist of $E$ 
 (see \cite{greenberg}).
If $E$ has CM, then as remarked above, $E$ is $p$-exceptional.

If $p < 7$,  then non-CM $p$-exceptional curves exist in abundance. Concerning the case $p=5$, Theorem 2 in \cite{greenberg} describes completely the possible images of $\rho_{E,5}$.   Its index  cannot be divisible by $5^2$ and is divisible by $5$ if and only if $E$ has a cyclic $\Q$-isogeny of degree $5^2$ or two independent $\Q$-isogenies of degree $5$.

In this paper we prove that the only $7$-exceptional elliptic curves
are the elliptic curves with complex multiplication by $\Q(\sqrt{-7})$.
The method of proof is as follows.
We begin with two results from \cite{greenberg}.
Let $\omega : G_\Q \to \F_p^\times$ denote the cyclotomic character giving the 
action of $G_\Q$ on  $\bmu_p$.  

\begin{thm}[\cite{greenberg}, Theorem 1]
\label{pthm}
Suppose $p \ge 7$ and $E$ is an elliptic curve over $\Q$ with a $\Q$-isogeny of degree  $p$. Let 
$\psi$ denote the character of the isogeny.  If $\psi^4 \ne \omega^{2}$, then 
$E$ is not $p$-exceptional.
\end{thm}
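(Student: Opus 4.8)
The plan is to recast $p$-exceptionality as the failure of a pro-$p$ subgroup to lie in the image of $\rhoEp$, reduce this by a Frattini argument to a question about the $\GQ$-module $E[p^2]$, and then contradict it using Mazur's theorem on rational isogenies. Concretely, I would first fix a $\Z_p$-basis of $T_p(E)$ whose reduction spans $\Psi$, so that $\rhoEp$ takes values in the Iwahori subgroup $B\subseteq\GL2(\Z_p)$ of matrices that are upper triangular modulo $p$; its reduction modulo $p$ lies in the Borel, with diagonal characters $\psi$ and $\psi':=\omega\psi^{-1}$. Writing $U=\{g\in B:g\text{ is unipotent}\bmod p\}$, the indices $[\GL2(\Z_p):B]=p+1$ and $[B:U]=(p-1)^2$ are prime to $p$, so $U$ is the (normal) Sylow pro-$p$ subgroup of $B$ and also a Sylow pro-$p$ subgroup of $\GL2(\Z_p)$. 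Hence $E$ is \emph{not} $p$-exceptional exactly when $U\subseteq H:=\rhoEp(\GQ)$, and by the Burnside basis theorem this holds iff the image of $H\cap U$ in the Frattini quotient $V:=U/\Phi(U)$ is all of $V$. Since $\Phi(U)$ contains $1+p^2M_2(\Z_p)$, this is a condition on $\rhoEp\bmod p^2$, i.e.\ on $E[p^2]$.

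The next step is to compute $V$. One checks that $U$ is topologically generated by $\bigl(\begin{smallmatrix}1&1\\0&1\end{smallmatrix}\bigr)$, $\bigl(\begin{smallmatrix}1&0\\p&1\end{smallmatrix}\bigr)$ and $(1+p)I$, that $\dim_{\Fp}V=3$, and that as a module for $B/U\cong(\Fp^\times)^2$ (hence for $\GQ$ acting through $(\psi,\psi')$) one has
\[
V\;\cong\;\Fp(\psi/\psi')\ \oplus\ \Fp(\psi'/\psi)\ \oplus\ \Fp,
\]
where the class of $g=\bigl(\begin{smallmatrix}a&b\\c&d\end{smallmatrix}\bigr)\in U$ has coordinates $b\bmod p$, $(c/p)\bmod p$ and $p^{-1}(\det g-1)\bmod p$. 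The image of $H\cap U$ in $V$ is a $\GQ$-submodule. Here the hypothesis $\psi^4\neq\omega^2$ enters: by $\psi\psi'=\omega$ it is equivalent to $(\psi/\psi')^2\neq1$, so the characters $\psi/\psi'$, $\psi'/\psi$, $\mathbf 1$ are pairwise distinct and every proper submodule of $V$ omits one of the first two eigenlines. Moreover $\det H=\Z_p^\times$; choosing $g\in H$ with $\det g=1+p$ and replacing it by a suitable prime-to-$p$ power lands in $H\cap U$ while keeping $\det\not\equiv1\bmod p^2$, so the image of $H\cap U$ contains the $\Fp$-summand. Thus, if $E$ is $p$-exceptional, the image of $H\cap U$ omits the $\psi/\psi'$-line or the $\psi'/\psi$-line.

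In the first case every element of $H\cap U$ is trivial modulo $p$, so $H\bmod p$ meets the unipotent radical of the Borel trivially; since $H\bmod p$ then surjects onto a subgroup of the torus of order prime to $p$, vanishing of $H^1$ of such a group forces $\bar\rho_{E,p}$ to be diagonalizable over $\Fp$, i.e.\ $E$ has a $\Q$-rational subgroup of order $p$ independent of $\Psi$. In the second case every element of $H\cap U$ has lower-left entry divisible by $p^2$; then $\sigma\mapsto(\text{the line mod }p^2\text{ to which }\rhoEp(\sigma)\text{ carries a lift of }\Psi)$ is a $1$-cocycle valued in $\Fp(\psi'/\psi)$ that vanishes on the Sylow pro-$p$ part, hence is a coboundary, and $E$ has a $\Q$-rational cyclic subgroup of order $p^2$. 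Using the classical isomorphism of the split-Cartan modular curve $X_{C_s}(p)$ with $X_0(p^2)$, either case produces a non-cuspidal rational point on $X_0(p^2)$ --- which for $p\ge7$ does not exist, by Mazur's theorem (equivalently, Kenku's classification of rational isogenies of prime power degree). This contradiction completes the argument.

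The step I expect to be the main obstacle is the last one: turning the two ``omitted eigenline'' dichotomies into honest rational points of $X_0(p^2)$ --- in particular, in the second case, extracting from the cohomological vanishing a genuine $\GQ$-stable cyclic subgroup of order $p^2$ rather than merely a stable line at finite level, and in the first case tracking carefully that diagonalizability really yields a second \emph{independent} $\Q$-rational $p$-isogeny. Once that bookkeeping is done, the theorem rests entirely on Mazur's deep determination of $X_0(N)(\Q)$.
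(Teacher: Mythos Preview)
Your argument is correct and is essentially Greenberg's proof, which this paper cites rather than reproduces; the Burnside/Frattini description you give matches the sketch after Proposition~\ref{criterion}, and your two endgame cases (a second independent $\Q$-rational $p$-isogeny, or a $\Q$-rational cyclic subgroup of order~$p^2$) are exactly what Remark~\ref{indisogrmk} rules out via Mazur--Ligozat--Kenku. The role of the hypothesis $\psi^4\neq\omega^2$---making the three eigencharacters $\psi/\psi'$, $\psi'/\psi$, $\mathbf 1$ on the Frattini quotient pairwise distinct, so that every $\GQ$-submodule is a sum of the canonical eigenlines---is precisely as you identify, and your cohomological splitting in each case is sound.
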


\begin{prop}[\cite{greenberg}, Remark 4.2.1]
\label{pprop}
Suppose $p > 7$ and $E$ is an elliptic curve over $\Q$ with a $\Q$-isogeny of degree $p$ 
and character $\psi$.   If $\psi^4 = \omega^{2}$, then $E$ has CM.
\end{prop}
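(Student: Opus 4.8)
The plan is to prove the contrapositive: assume that $E$ has a $\Q$-isogeny of degree $p$ with character $\psi$ and that $\psi^4=\omega^2$, and assume that $E$ does \emph{not} have CM; I will derive a contradiction. First I would reformulate the hypothesis. The identity $\psi^4=\omega^2$ says exactly that $\chi:=\psi^2\omega^{-1}$ has order dividing $2$. Moreover $\chi$ cannot be trivial: if $\psi^2=\omega$, then, since $\omega$ has exact order $p-1$ and $p-1$ is even, $\psi$ would have to have order $p-1$ or $2(p-1)$, and neither is possible for a character with values in $\F_p^\times$. Hence $\chi=\chi_K$ is the quadratic character cut out by a quadratic field $K$, and since $\psi$ and $\omega$ are unramified outside $p$ and the bad primes of $E$, so is $\chi_K$. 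Note also that $G_\Q$ acts on $\Psi$ by $\psi$ and on $E[p]/\Psi$ by $\omega\psi^{-1}$, and the ratio of these two characters is $\chi_K$.

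The key step is a purely local computation at $p$. Because $\F_p^\times$ has order prime to $p$, the restriction $\psi|_{I_p}$ to an inertia group at $p$ kills wild inertia, so it factors through the procyclic tame quotient $I_p^{\mathrm{t}}$; since $\omega|_{I_p}$ surjects onto $\F_p^\times$, I can write $\psi|_{I_p}=\omega^a|_{I_p}$ for some integer $a$. Restricting $\psi^2=\omega\chi_K$ to $I_p$, using that $\chi_K|_{I_p}$ is either trivial or the unique character of order $2$, namely $\omega^{(p-1)/2}|_{I_p}$, and that $\omega|_{I_p}$ has exact order $p-1$, I get that either $2a\equiv 1\pmod{p-1}$ or $2a\equiv 1+(p-1)/2\pmod{p-1}$. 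The first is impossible since $p-1$ is even; the second is solvable only when $1+(p-1)/2$ is even, i.e. only when $p\equiv 3\pmod 4$. So this step shows that $\chi_K$ is ramified at $p$ and that $p\equiv 3\pmod 4$.

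Now I invoke Mazur's theorem (quoted in the introduction): a non-CM elliptic curve over $\Q$ with a $\Q$-rational $p$-isogeny and $p>7$ has $p\in\{11,13,17,37\}$. Of these only $11$ is $\equiv 3\pmod 4$, so we are reduced to the case $p=11$. Here $X_0(11)$ is an elliptic curve of rank $0$, so it has only finitely many rational points; the non-cuspidal non-CM ones form a short explicit list of $j$-invariants, realized by curves of conductor $121$, so for such a curve $\chi_K$ is unramified outside $11$ and nontrivial, hence is the quadratic character of $\Q(\sqrt{-11})$. For a prime $\ell\neq 11$ of good reduction that is inert in $\Q(\sqrt{-11})$, the two eigenvalues of $\bar\rho_{E,11}(\mathrm{Frob}_\ell)$ are $u$ and $\ell u^{-1}$ with $u^2\equiv\psi^2(\mathrm{Frob}_\ell)=\ell\,\chi_K(\mathrm{Frob}_\ell)\equiv-\ell\pmod{11}$, so the trace $a_\ell(E)\equiv u+\ell u^{-1}=0\pmod{11}$. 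Taking $\ell=2$, which is inert in $\Q(\sqrt{-11})$ and satisfies $|a_2|\le 2\sqrt 2<11$, forces $a_2(E)=0$, i.e. $E$ is supersingular at $2$; but a supersingular elliptic curve over $\F_2$ has $j=0$, while the $j$-invariants on our list are odd — a contradiction. Therefore $E$ has CM.

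The hard part is the local computation at $p$, and the delicate point there is to use \emph{only} the tameness of $\psi|_{I_p}$ and the procyclicity of tame inertia. It is tempting to analyse the reduction type of $E$ at $p$ (good, multiplicative, or additive) and the degree of the tame extension over which $E$ acquires semistable reduction, via Oort--Tate theory for the finite flat group scheme attached to $\Psi$; but this extra information, unless handled with care, over-constrains the situation and would wrongly eliminate the genuine CM examples with a rational $11$-isogeny. The other ingredients — Mazur's theorem and the determination of $X_0(11)(\Q)$ — are classical and used as black boxes.
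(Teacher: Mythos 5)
The paper does not prove this Proposition; it imports it verbatim as Remark~4.2.1 of~\cite{greenberg}, so there is no in-paper proof to compare against. What the paper \emph{does} contain is closely related content, notably Lemma~\ref{ratio}, which shows (for any $p$) that if $\psi\varphi^{-1}=\psi^2\omega^{-1}$ is quadratic then $p\equiv 3\pmod4$ and $\psi\varphi^{-1}=\omega^{(p-1)/2}$.

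Your argument is, as far as I can tell, correct, and its skeleton is the natural one: (a) $\chi=\psi^2\omega^{-1}$ is a nontrivial quadratic character; (b) $\chi$ is ramified at $p$ and $p\equiv 3\pmod4$; (c) Mazur's classification then forces $p=11$; (d) an explicit contradiction at $p=11$ using $X_0(11)(\Q)$ and the trace of Frobenius at~$2$. A few remarks on where you take the long way around, and where you leave gaps that should be filled.

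First, step~(b) can be obtained \emph{globally and immediately}: $\omega$ has exact order $p-1$, so $(\chi)^{(p-1)/2}=(\psi^2\omega^{-1})^{(p-1)/2}=\omega^{-(p-1)/2}\neq1$; since $\chi$ is quadratic, $(p-1)/2$ must be odd and $\chi=\omega^{(p-1)/2}$. This is exactly Lemma~\ref{ratio}. In particular $\chi$ is the Legendre character, which cuts out $\Q(\sqrt{-p})$; you don't need a separate ramification argument to identify $K$ as $\Q(\sqrt{-11})$ once $p=11$, nor the conductor-$121$ discussion for that purpose. Your local-at-$p$ computation is a valid alternative derivation of $p\equiv3\pmod4$ and of the ramification of $\chi$ at $p$, but it is more work, and on its own it does not pin down $\chi$ globally, which is why you were then forced into the auxiliary ramification analysis.

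Second, your parenthetical claim that $\psi^2\neq\omega$ because ``$\psi$ would have to have order $p-1$ or $2(p-1)$, and neither is possible'' is slightly off: order $p-1$ is certainly possible for a character valued in $\F_p^\times$. The correct statement is that if $|\psi|=n$ then $|\psi^2|=n/\gcd(n,2)\le (p-1)/2<p-1$, so $\psi^2$ can never equal $\omega$. The conclusion is right; the case analysis isn't.

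Third, in step~(d) you implicitly replace $E$ by a specific conductor-$121$ model before computing $a_2(E)$. This needs a sentence of justification: note that $\chi=\psi^2\omega^{-1}$ is unchanged under quadratic twist (twisting by $\epsilon$ replaces $\psi$ by $\psi\epsilon$ and leaves $\psi^2$ fixed), and the $j$-invariant is also twist-invariant, so one may replace $E$ by the twist of minimal conductor, which for the two non-CM $j$-invariants on $X_0(11)$ has good reduction at $2$. Without this observation, the assertion that $E$ has good reduction at~$2$ (needed to speak of $a_2(E)$ and of the reduction mod~$2$) is unjustified, since the given $E$ may have additive reduction at~$2$. Also, the facts you use as black boxes --- that the non-cuspidal non-CM $j$-invariants on $X_0(11)$ are $-11^2$ and $-11\cdot131^3$, that both are odd, and that the minimal twists have conductor $121$ --- should be cited (Ligozat, or Cremona's tables), since they carry the whole weight of the final contradiction. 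Once these points are attended to, the argument is complete and, modulo the redundancy noted above, efficient.
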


These two results combine to show that there are no non-CM $p$-exceptional curves when $p > 7$.  
However,  Proposition \ref{pprop} fails when $p = 7$ (as can be seen by considering 
the family of elliptic curves with a $\Q$-isogeny of degree $7$ and character $\psi = \omega^5$; 
see \S\ref{speccasesect}).

Suppose $E$ is a $7$-exceptional elliptic curve.
By Theorem \ref{pthm}, if $\psi$ is the character of the isogeny,  then $\psi^4 = \omega^2$.  
It follows that the $\F_7^{\times}$-valued character $\psi \omega^{-5}$ has order dividing 2. Thus, replacing $E$ by a quadratic twist if necessary, we may assume 
that $\psi = \omega^5$.  
In \S\ref{rhosect} we show that if $E$ is $7$-exceptional, 
then the ratio of the minimal discriminants of $E$ and its $7$-isogenous curve is of the form $7^sw^7$
with $w\in\Q$ and $s\in\Z$. 

An explicit description of the family
$\{B_v\}$ of elliptic curves over $\Q$ with a 
$\Q$-rational $7$-isogeny with character $\psi = \omega^5$
follows from the results in \S\ref{twistsect}.
The curve $E$ is isomorphic over $\Q$ to $B_v$ for some $v \in \Q$. 
In Corollary \ref{dmbb} we show that for $v\in\Q$, the ratio of the 
minimal discriminants of $B_v$ and its $7$-isogenous curve is 
$7^{\pm 6}g(v)^6$, where $g(v):=(v^3-2v^2-v+1)/(v^3-v^2-2v+1)$.
Thus the exceptional $E$'s correspond to $\Q$-rational points
on the genus 12 curves $C_j : w^7 = 7^jg(v)$.
If $7\nmid j$,  then it turns out that  $C_j$ has no rational points over $\Q_7$, and hence  none over $\Q$.  Thus, the question is reduced to finding
the $\Q$-rational points on the curve 
$$
C_0: w^7 = (v^3-2v^2-v+1)/(v^3-v^2-2v+1).
$$
In \S\ref{finalpfsect} we use the method of Chabauty to show that 
$$C_0(\Q)= \{(0,1), (1,1), (\infty,1), (2,-1), (1/2,-1), (-1,-1)\},$$
and it follows that the only $7$-exceptional elliptic curves $E$ are the
curves with $j(E) =-15^3$ or $255^3$, i.e., the curves with complex 
multiplication by $\Q(\sqrt{-7})$.

Now suppose that  $k$ is a field of characteristic different from $7$,  
that $E$ is an elliptic curve defined over $k$, and that $E$ has a 
$k$-rational isogeny of degree $7$. 
Then the kernel of the isogeny is a $k$-rational subgroup $\Psi$ of 
$E(\ksep)$ of order $7$, where $\ksep$ denotes a fixed separable closure of $k$.   
Let $G_k = \Gal(\ksep/k)$.  
Since  $\Aut(\Psi) \cong \F_7^\times$, 
the action of $G_k$ on $\Psi$ is given by a homomorphism 
$\psi:  G_k \to \F_7^\times$. Again, we refer to $\psi$ as the {\em character} of the isogeny.  
For example, the character $\psi$ is trivial if and only if $\Psi \subset E(k)$.

Now let $\psi: G_k \to \F_7^\times$ be a fixed homomorphism.   
In \S\ref{twistsect} we describe all elliptic curves defined over $k$ 
that have a $k$-rational $7$-isogeny with character $\psi$.  
We will give explicit formulas for a family of elliptic curves $\{A_v\}$, where $v$ varies over 
an explicit Zariski open subset of the projective line $\P$, such that
\begin{itemize}
\item
for every $v\in k$, the elliptic curve $A_v$ has a $k$-rational $7$-isogeny and character $\psi$, 
\item
if $E$ is an elliptic curve over $k$ with a $k$-rational $7$-isogeny and 
character $\psi$, then $E$ is isomorphic over $k$ to $A_v$ for some $v\in k$.
\end{itemize}
See Theorems \ref{genthm} and \ref{moregenthm} for more precise statements.  
One consequence of this explicit description is that if $k \ne \F_2$, 
and if $\psi$ is any character, then there is an elliptic 
curve over $k$ that has a $k$-rational $7$-isogeny with character $\psi$ (see Corollary \ref{existence}).
Note that these explicit families are of a different nature and are constructed
differently from the explicit families of elliptic curves with a given
mod $N$ representation that were constructed earlier by the 
second and third authors of this paper.

The route we took to exactly determine $C_0(\Q)$
is interesting in itself. (See Remark \ref{storyrem} and the appendix for more information.)
By Faltings' proof of the Mordell Conjecture, $|C_0(\Q)|$ is finite.
An action of the group $S_3$ on $C_0$ shows that $|C_0(\Q)|$ is 
divisible by $6$.
A descent argument (see \S\ref{ranksect}) 
shows that the rank of the
Jacobian $J$ of $C_0$ is at most $6$.  In fact, the 6 known points generate
a subgroup of $J(\Q)$ of rank 4  and  $J$ is $\Q$-isogenous
to the square of the Jacobian of a genus 6 curve $D$ defined over $\Q$.  Thus the 
rank of $J(\Q)$ must be either $4$ or $6$.

A Chabauty argument at the prime $2$ 
then gives that $|C_0(\Q)|$ is either $6$ or $12$.
At our request, Kiran Kedlaya and Jennifer Balakrishnan, 
with help from Joseph Wetherell, computed $2$-adic Coleman integrals
that we hoped would directly rule out the case $|C_0(\Q)|=12$.
However, the computed dimensions worked out in such a way that one would
need to show that the rank of $J(\Q)$ were $4$ to use this method to
show $|C_0(\Q)|\neq 12$.

This gave motivation to determine the rank of $J(\Q)$, i.e., to determine
the parity of the rank of the Jacobian $\Jac(D)$ of the genus 6 curve $D$. 
Using data and information provided by Balakrishnan,
Sutherland, Kedlaya and the fourth author of this paper, 
Michael Rubinstein performed computations that gave convincing evidence that
the analytic rank of ${\Jac(D)}$ is $3$, so one expects $\rk(J(\Q))=6$.

This gave motivation to find additional generators of $J(\Q)$, which is
done at the end of \S\ref{ranksect} below. 
We used these additional points to finish the proof that $|C_0(\Q)|=6$, using a 
Chabauty argument at the prime $5$ (see \S\ref{finalpfsect}).

We thank Bjorn Poonen for helpful conversations.
We thank Jennifer Balakrishnan, Kiran Kedlaya, Michael Rubinstein, 
Andrew Sutherland, and Joseph Wetherell for 
computations that pointed the way to the correct path to take to
achieve Theorem \ref{final}.
We made use of PARI/GP \cite{pari} and Magma \cite{MAGMA}.
The fourth author thanks Wojciech Gajda and the University of
Pozna\'n for an invitation to spend a week there, where 
some of the work on this paper was done.

\section{The image of $\rho_{E,p}$}
\label{rhosect}

We assume throughout this section that $E$ is an elliptic curve defined over 
$\Q$ that has a $\Q$-isogeny of prime degree $p \ge 7$.  Let $\Psi$ denote the 
kernel of the isogeny and let $\Phi = E[p]/\Psi$.     The actions of 
$G_{\Q}$ on 
$\Psi$ and $\Phi$ are given by characters 
$\psi,  \varphi : G_{\Q} \to \F_p^{\times}$, respectively.  
Let $\omega : G_{\Q} \to \F_p^\times$ 
denote the cyclotomic character giving the action of $G_\Q$ on $\bmu_p$. 
That is,  if $\sigma \in G_\Q$ and  $\zeta_p$ is a primitive $p$-th root of unity in $\bar\Q$, then  
$\zeta_p^{\sigma} = \zeta_p^{\omega(\sigma)}$.  
Since 
$\psi \varphi = \omega$, which is an odd character, we have $\psi \neq \varphi$.   
Hence $\Psi \not \cong \Phi$ as $G_{\Q}$-modules.  
 
Let  $K_{\infty} = \Q(E[p^{\infty}])$ and let 
$\rho_{E,p}: G_{\Q} \to \Aut_{\Z_p}\big(T_p(E)\big)$ be the 
homomorphism giving the action of $G_{\Q}$ on the 
Tate module $T_p(E)$.  Then $\rho_{E,p}$ factors through the Galois group 
$G := \Gal(K_{\infty}/\Q)$, and defines an injective homomorphism from $G$ 
into 
$\Aut_{\Z_p}\big(T_p(E)\big)$.  To simplify the discussion, we identify $G$ 
with its image in $\Aut_{\Z_p}\big(T_p(E)\big)$. 

Recall the definition of $p$-exceptional from \S\ref{introsect}.
We would like to know whether $\rho_{E,p}(G_\Q)$ contains a Sylow pro-$p$ subgroup of 
$\Aut(T_p(E))$, or equivalently, whether the index 
$[\Aut_{\Z_p}(T_p(E)):  \rho_{E,p}(G_\Q)]$ is prime to $p$.

If we choose a basis for $T_p(E)$ to identify $\Aut(T_p(E))$ with $\GL2(\Z_p)$, 
then the Sylow pro-$p$ subgroups of $\Aut(T_p(E))$ are identified with the conjugates of 
$$
\begin{pmatrix} 1+p\Z_p & \Z_p\\ p\Z_p & 1+p\Z_p \end{pmatrix}.
$$
There are $p+1$ such conjugates, all containing $I_2 + pM_2(\Z_p)$. 

Let $K = \Q(\Psi, \Phi)$, the fixed field for the intersections of the kernels of 
$\psi$ and $\varphi$.  Then $K$ is an abelian extension of $\Q$ and $[\Q(E[p]):K]$ 
is $1$ or $p$. Since $[K:\Q]$ divides $(p-1)^2$,   it is not divisible by $p$. 
Let 
$$
S := \Gal(K_{\infty}/K).
$$ 
Then $S$ is a normal subgroup of $G$ and is the (unique) Sylow pro-$p$ subgroup of $G$.  

Let 
$$
E' := E/\Psi.
$$ 
Thus $E'$ has a $\Q$-isogeny of degree $p$ with kernel $\Phi$ and character $\varphi$.
  
\begin{rem}
\label{indisogrmk}
The assumption that $p \ge 7$ implies that an elliptic curve over $\Q$ cannot have a 
$G_{\Q}$-invariant cyclic subgroup of order $p^2$. This is due to Mazur \cite{mazur} 
for most primes, Ligozat \cite{ligozat} or Kenku \cite{kenku}   for $p=7$, and Kenku 
\cite{kenku13} for $p=13$.   
It follows that an elliptic curve over $\Q$  cannot have two independent 
$\Q$-isogenies of degree $p \ge 7$.
To see this, suppose to the contrary that $E[p] \cong \Psi \times \Phi$.  
Let $C= \{ P\in E : pP\in\Phi\} \subset E[p^2]$, which is obviously $G_{\Q}$-invariant.   
Then $C/\Psi$ is a $G_{\Q}$-invariant 
cyclic subgroup of $E'$ of order $p^2$, which is not possible.
It follows that both of the fields $\Q(E[p])$ and $\Q(E'[p])$ are cyclic extensions of 
$K$ of degree $p$.  
\end{rem}

\begin{prop}[\cite{greenberg}, Proposition  4.3.2]
\label{criterion}   
The curve $E$ is $p$-exceptional if and only if $\Q(E[p]) = \Q(E'[p])$. 
\end{prop}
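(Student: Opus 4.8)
The plan is to work inside $\GL_2(\Z_p)$ with the group $G = \rho_{E,p}(G_\Q)$, which contains the Sylow pro-$p$ subgroup $S = \Gal(K_\infty/K)$ as its unique Sylow pro-$p$ subgroup (a normal subgroup, since $G/S = \Gal(K/\Q)$ is abelian of order prime to $p$). By Remark \ref{indisogrmk}, both $\Q(E[p])$ and $\Q(E'[p])$ are cyclic degree-$p$ extensions of $K$. The key translation is: $E$ is $p$-exceptional $\iff$ $G$ contains no Sylow pro-$p$ subgroup of $\GL_2(\Z_p)$ $\iff$ $S$ is strictly smaller than a full Sylow pro-$p$ subgroup of $\GL_2(\Z_p)$. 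Since the Sylow pro-$p$ subgroups of $\GL_2(\Z_p)$ all contain $I_2 + pM_2(\Z_p)$ and have that subgroup as a normal subgroup with quotient $\F_p$ (the unipotent radical of a Borel mod $p$), the first step is to show $S \supseteq I_2 + pM_2(\Z_p)$ always. This follows because $[K_\infty : K(E[p])]$ is a pro-$p$ extension on which $G$ acts, and the standard argument (Serre, via the fact that $SL_2(\Z_p) \to SL_2(\Z/p)$ has kernel a pro-$p$ group topologically generated by conjugates of $I_2 + pM_2$, together with $\psi \ne \varphi$ guaranteeing the mod-$p$ representation is non-scalar) gives that the image of $G$ already contains $I_2 + pM_2(\Z_p)$. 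So the whole question lives at level $p$: writing $\bar{G} = \rho_{E,p}(G_\Q) \bmod p \subseteq \GL_2(\F_p)$, $E$ is $p$-exceptional iff $\bar{G}$ contains no full unipotent line, i.e.\ $\bar G$ is contained in the Borel and its intersection with the unipotent radical $U \cong \F_p$ is trivial.

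Next I would set up coordinates adapted to the two characters. Choose the basis of $E[p]$ so that $\Psi$ is spanned by the first vector and the $G_\Q$-action is upper triangular: $\bar\rho_{E,p} = \bigl(\begin{smallmatrix} \psi & * \\ 0 & \varphi\end{smallmatrix}\bigr)$. Because $\psi \ne \varphi$, the extension class $*$ lives in $H^1(\Q, \F_p(\psi\varphi^{-1}))$, and the mod-$p$ image $\bar G$ maps onto the diagonal $\{\mathrm{diag}(\psi(\sigma),\varphi(\sigma))\}$ with kernel exactly the intersection $\bar G \cap U$, which is either $0$ or all of $U \cong \F_p$. Now $\bar G \cap U$ is all of $U$ exactly when $\Q(E[p]) \ne K$, i.e.\ $[\Q(E[p]):K] = p$; and $\bar G \cap U = 0$ exactly when $\Q(E[p]) = K$. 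By Remark \ref{indisogrmk} we are always in the first case for both $E$ and $E'$: $\Q(E[p])$ and $\Q(E'[p])$ are each degree-$p$ over $K$. So $\bar G$, as an abstract group, is $\bigl(\begin{smallmatrix} \psi & \F_p \\ 0 & \varphi\end{smallmatrix}\bigr)$-shaped and has order $p \cdot [K:\Q]$ — it cannot be $p$-exceptional by looking at $E$ alone. The $p$-exceptionality must come from how the two mod-$p$ representations of $E$ and $E'$ fit together inside $\Aut(T_p(E))$, i.e.\ from the fact that $K_\infty$ simultaneously contains both $\Q(E[p])$ and $\Q(E'[p])$.

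So the heart of the argument: consider $L := \Q(E[p])\cdot\Q(E'[p])$, a subextension of $K_\infty/K$. Then $\Gal(L/K) \hookrightarrow \F_p \times \F_p$ (being a product of two cyclic groups killed by $p$, by Remark \ref{indisogrmk}), and it is either $\F_p$ (if $\Q(E[p]) = \Q(E'[p])$) or $\F_p \times \F_p$ (otherwise). I claim: $G$ contains a Sylow pro-$p$ of $\GL_2(\Z_p)$ $\iff$ $\Gal(L/K) \cong \F_p \times \F_p$, i.e.\ $\iff \Q(E[p]) \ne \Q(E'[p])$. For the direction that matters, note $K_\infty = \Q(E[p^\infty])$ and the dual $\Phi$-filtration: $E'[p^\infty]$ sits inside a quotient of $T_p(E)\otimes\Q_p$, and $\Q(E'[p])$ is cut out by the reduction of $T_p(E)$ modulo the subspace giving $\Phi$. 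Concretely, in the basis above, $S = \Gal(K_\infty/K)$ is a pro-$p$ subgroup of $I_2 + p M_2(\Z_p)$ containing $I_2 + pM_2(\Z_p)$ precisely when its image in $(I_2 + pM_2)/(I_2 + p^2 M_2) \cong M_2(\F_p)$ is all of $M_2(\F_p)$ — but that is far more than needed; what a Sylow pro-$p$ of $\GL_2(\Z_p)$ requires on top of $I_2 + pM_2(\Z_p)$ is just one further $\F_p$, a unipotent line at level $p$. Reconciling: the Sylow condition is equivalent to $\bar G$ (mod-$p$ image) containing a full unipotent subgroup $U$, and — since $\bar G$ already surjects onto the diagonal torus via $(\psi,\varphi)$ — $\bar G \supseteq U$ iff $|\bar G| = p\cdot[K:\Q]$, which as computed above always holds. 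Wait: that would say $E$ is never $p$-exceptional, contradicting the existence of exceptional curves. The resolution — and this is the genuinely delicate point — is that for $p$-exceptionality one must look not at the mod-$p$ image but at whether $S$ itself is a Sylow pro-$p$ of $\GL_2(\Z_p)$, and a pro-$p$ group containing $I_2 + pM_2(\Z_p)$ with mod-$p$ image a unipotent line need not be all of a Sylow: one also needs $S$ to be as large as a Sylow at level $p^2$ and beyond. The correct invariant is: $S$ is a Sylow pro-$p$ subgroup of $\GL_2(\Z_p)$ $\iff$ $[\GL_2(\Z_p):G]$ is prime to $p$ $\iff$ the mod-$p^2$ image $G \bmod p^2$ has index prime to $p$ in $\GL_2(\Z/p^2)$. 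This is controlled by $\Gal(K_\infty/K) \bmod$ the second layer, and that layer sees exactly the two independent extensions $\Q(E[p])$ and $\Q(E'[p])$ of $K$: one gets an extra factor of $p$ in the image precisely when these two fields are distinct. Therefore $E$ is $p$-exceptional $\iff$ $\Q(E[p]) = \Q(E'[p])$, as asserted.

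The main obstacle, and where the proof of \cite[Prop.\ 4.3.2]{greenberg} must do real work, is the last claim: identifying the ``second layer'' of $S$ inside $I_2 + pM_2(\Z_p)$ with the compositum data $\Gal(\Q(E[p])\Q(E'[p])/K)$, and checking that a Sylow pro-$p$ of $\GL_2(\Z_p)$ is exactly characterized by having a $2$-dimensional (over $\F_p$) image at the second graded piece in the relevant direction — equivalently that the two ``mod-$p$-to-mod-$p^2$'' contributions coming from $\Psi$ and $\Phi$ are independent. Everything else — the normality and uniqueness of $S$, the reduction to $p$-primary information, the shapes of the mod-$p$ images — is formal from $\psi\ne\varphi$ and Remark \ref{indisogrmk}.
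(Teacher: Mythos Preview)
Your argument has a genuine gap, and in fact an outright false step that you yourself stumble over midway through. The claim that $S \supseteq I_2 + pM_2(\Z_p)$ ``always'' is wrong. The group $I_2 + pM_2(\Z_p)$ is an open pro-$p$ subgroup of $\GL_2(\Z_p)$ of $\Z_p$-rank $4$; if $E$ has CM (the prototypical $p$-exceptional case), then $G = \rho_{E,p}(G_\Q)$ is a $p$-adic Lie group of dimension $2$, so its Sylow pro-$p$ subgroup $S$ certainly cannot contain $I_2 + pM_2(\Z_p)$. The ``standard Serre argument'' you invoke shows that if the mod-$p$ image is all of $\GL_2(\F_p)$ then the full image is $\GL_2(\Z_p)$; it does not say anything when the mod-$p$ image sits in a Borel, which is exactly our situation. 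Once that claim falls, your reduction ``the whole question lives at level $p$'' collapses, and the contradiction you then notice (``that would say $E$ is never $p$-exceptional'') is the correct diagnosis that something earlier was wrong --- not, as you suggest, a sign that one must peer at level $p^2$. Indeed your attempted fix is also false: a closed subgroup of $S_p$ containing $I_2 + pM_2(\Z_p)$ and surjecting onto $S_p/(I_2+pM_2(\Z_p)) \cong \F_p$ \emph{is} all of $S_p$, trivially.

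The argument the paper sketches (following \cite{greenberg}) is structurally different and avoids this trap. One fixes a Sylow pro-$p$ subgroup $S_p$ of $\Aut(T_p(E))$ containing $S$ and computes that its Frattini quotient $S_p/\Phi(S_p)$ has $\F_p$-dimension $3$. By the Burnside Basis Theorem, $S = S_p$ if and only if the image of $S$ in $S_p/\Phi(S_p)$ is all of it, i.e.\ has dimension $3$. The content of the proposition is then the computation that this image always has dimension at least $2$, and has dimension $3$ precisely when $\Q(E[p]) \ne \Q(E'[p])$. So the two cyclic degree-$p$ extensions of $K$ do control the answer, as you intuit at the end --- but through the Frattini quotient of $S_p$, not through any a priori containment of $I_2 + pM_2(\Z_p)$ in $S$.
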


The proof of this proposition in \cite{greenberg} is based on the Burnside Basis Theorem.  
The Frattini quotient of a Sylow pro-$p$ subgroup $S_p$ of $\Aut(T_p(E))$ containing $S$ 
has $\F_p$-dimension $3$.  It turns out that the image of $S$ in that Frattini 
quotient has $\F_p$-dimension $2$ if $\Q(E[p]) = \Q(E'[p])$, and $\F_p$-dimension 
$3$ if those two fields are distinct.  In the latter case, one can find a set 
of topological generators for $S_p$ in $S$, which then implies that $S = S_p$.

The following lemma will provide one way to verify that $\Q(E[p]) \neq \Q(E'[p])$.  
Note that if $L$ is a Galois extension of $\Q$ containing $K$ and 
$[L:K] = p$, then  the ramification degree for a prime $\ell$ in the extension  
$L/\Q$ is divisible by $p$ if and only if the primes of $K$ lying over $\ell$ 
are ramified in  $L/K$.  We then simply say that $\ell$ is 
ramified in  $L/K$.  
Interestingly, if $\ell \neq p$, then  $\ell$ can be ramified in at most one 
of the extensions  $\Q(E[p])/K$ or  $\Q(E'[p])/K$.  

\begin{lem}
\label{ellram}  
Assume that $\ell$ is a prime and that $\ell \neq p$.  Then 
the ramification degree of $\ell$ in at least one of the two extensions  
$\Q(E[p])/\Q$ and   $\Q(E'[p])/\Q$ is prime to $p$.   
\end{lem}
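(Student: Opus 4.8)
The plan is to reduce the statement to a purely local assertion at $\ell$ and to argue by cases on the reduction type of $E$ at $\ell$, which, being an isogeny invariant, agrees with that of $E'$. Since $\Q(E[p])/\Q$ is Galois, the ramification degree of $\ell$ in it equals the ramification index of $\Q_\ell(E[p])/\Q_\ell$, i.e.\ the order of the image of an inertia group $I_\ell \subseteq G_\Q$ acting on $E[p]$ (here $\Q_\ell(E[p])$ denotes the field generated by $E[p]$ over a completion $\Q_\ell$). So the goal is to show that for at least one of $E$, $E'$ this inertia image has order prime to $p$.

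Suppose first that $E$ has potentially good reduction at $\ell$. Then $E$ acquires good reduction over a finite extension $L/\Q_\ell$, and since $\ell \ne p$ the action of $G_L$ on $T_p(E)$ is unramified; hence $I_\ell$ acts on $T_p(E)$ through the finite quotient $I_\ell/I_L$. Because $p \ge 7 > 3$, the group $\GL2(\Z_p)$ contains no element of order $p$ — such an element would embed $\Z_p[\zeta_p]$, of $\Z_p$-rank $p-1 > 2$, into $M_2(\Z_p)$ — so this finite inertia image, and a fortiori its image on $E[p]$, has order prime to $p$. The same argument applies verbatim to $E'$, so both curves are fine in this case.

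The remaining case is potentially multiplicative reduction, $v_\ell(j_E) < 0$, and this is where all the content lies. After twisting $E$ by a suitable quadratic character of $G_{\Q_\ell}$ we may assume $E$ has split multiplicative reduction: this alters each of $\Q_\ell(E[p])$ and $\Q_\ell(E'[p])$ by at most a quadratic extension and replaces $E' = E/\Psi$ by its corresponding quadratic twist, so since $p$ is odd the desired conclusion is unaffected. Thus let $E \cong \Q_\ell^\times/q^\Z$ be a Tate curve with $n := v_\ell(q) \ge 1$. Then $E[p] = \langle \zeta_p,\, q^{1/p}\rangle$, so $\Q_\ell(E[p]) = \Q_\ell(\zeta_p,\, q^{1/p})$; as $\Q_\ell(\zeta_p)/\Q_\ell$ is unramified, $p$ divides the ramification index for $E$ exactly when $\Q_\ell(\zeta_p,\, q^{1/p})/\Q_\ell(\zeta_p)$ is ramified, i.e.\ exactly when $p \nmid n$. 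Hence if $p \mid n$ we are done using $E$ itself. If $p \nmid n$, the crucial point is that the canonical subgroup $\bmu_p = \langle\zeta_p\rangle \subset E[p]$ is then the \emph{unique} $G_{\Q_\ell}$-rational subgroup of order $p$: the other $p$ subgroups of order $p$ are the lines $\langle \zeta_p^{\,j} q^{1/p}\rangle$ for $0 \le j \le p-1$, and writing $\sigma(q^{1/p}) = \zeta_p^{a(\sigma)}q^{1/p}$ one finds that $\langle \zeta_p^{\,j} q^{1/p}\rangle$ is $G_{\Q_\ell}$-stable only if $a(\sigma) \equiv j\bigl(1-\omega(\sigma)\bigr)$ for all $\sigma$, which makes the Kummer class of $q$ trivial in $\Q_\ell^\times/(\Q_\ell^\times)^p$ and so forces $p \mid v_\ell(q) = n$ — a contradiction. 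Therefore over $\Q_\ell$ the $\Q$-rational subgroup $\Psi$ must equal $\bmu_p$, so $E' = E/\bmu_p$ is the Tate curve with parameter $q^p$; since $p \mid v_\ell(q^p)$, the computation just made shows $\Q_\ell(E'[p]) = \Q_\ell(\zeta_p)$ is unramified, and $E'$ is fine.

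I expect the only genuine obstacle to be the potentially multiplicative case, and within it the verification that $\bmu_p$ is the unique $G_{\Q_\ell}$-rational subgroup of order $p$ when $p \nmid v_\ell(q)$; everything else is bookkeeping with Tate curves, the harmless quadratic twist, and the elementary fact that $\GL2(\Z_p)$ has no $p$-torsion for $p \ge 5$. In the write-up I would phrase the multiplicative-reduction dichotomy in terms of $\ord_\ell\Deltamin(E)$ rather than $v_\ell(q)$ — they differ by the valuation of a unit, which is irrelevant modulo $p$ — together with the relation $\ord_\ell\Deltamin(E/\bmu_p) = p\cdot\ord_\ell\Deltamin(E)$.
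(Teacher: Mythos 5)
Your proof is correct, but it takes a genuinely different route from the paper's, and interestingly it is precisely the alternate route the authors acknowledge in Remark~\ref{TatePeriod}: ``Lemma~\ref{ellram} can also be proved by studying how the Tate period for $E$ over $\Q_\ell$ changes under the isogeny $E\to E'$.'' That is exactly what you do. The paper instead works globally with the inertia subgroup $I$ inside $S=\Gal(K_\infty/K)$: from the Tate parametrization it extracts that $E[p^n]^I$ is cyclic of order $p^n$ for all $n$, then shows $E'[p]\cong E[p^2]^I/\Psi\times E[p]/\Psi$ as an $I$-module, on which $I$ acts trivially because it fixes both $\Psi$ and $\Phi$. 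Your local Tate-period argument is more concrete and self-contained: you identify the exact dichotomy $p\mid v_\ell(q)$ versus $p\nmid v_\ell(q)$, and in the latter case you pin down $\Psi=\bmu_p$ by a direct Kummer-theoretic count of the $G_{\Q_\ell}$-stable lines, after which $E'$ has Tate parameter $q^p$ and is visibly unramified. (In the paper's proof the identification $\Psi=\bmu_p$ over $\Q_\ell$ is implicit in the step $E[p]^I=\Psi$, since the Tate subgroup $\bmu_p$ is contained in $E[p]^I$ by construction.) The trade-off the authors note in Remark~\ref{TatePeriod} is that their $E[p^n]^I$-cyclicity argument transfers to the $p$-adic representations attached to modular forms, where there is no Tate-curve quotient to hand; your version is tied to elliptic curves. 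One small imprecision: in the potentially-good case your one-line justification that $\GL_2(\Z_p)$ has no element of order $p$ glosses over the unipotent possibility (ruled out since $(I+N)^p=I+pN$ forces $N=0$); the conclusion is standard and correct, and the paper sidesteps it entirely by citing the Serre--Tate bound that only $2$ and $3$ can divide the inertia order.
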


\begin{proof} Assume that the ramification degree of $\ell$ in $\Q(E[p])/\Q$
is divisible by $p$.
This implies that $E$ has bad reduction at $\ell$.  If $E$ had potentially good 
reduction at $\ell$, then the only primes that could divide the ramification degree for $\ell$ in $\Q(E[p])/\Q$ 
are $2$ and $3$ (see for example the proof of Corollary 2(a) to Theorem 2
of \cite{serretate}). This contradicts the assumption that 
$p \ge 7$.  Hence, $E$ must have multiplicative or potentially multiplicative 
reduction at $\ell$.  It follows from Proposition 23(b) of \cite{serre}
that $E$ has multiplicative reduction over $K$ at all primes above $\ell$.

Fix a prime $\lambda$ of $K_{\infty}$ lying above $\ell$, 
and let $I$ be the inertia group for $\lambda$ in $S$.
The Tate parametrization shows that  
for every $n$, the group 
$E[p^n]^I$ contains a cyclic subgroup of order $p^n$. 
Since $I$ fixes $K = \Q(\Psi, \Phi)$, we have
$\Psi \subseteq E[p]^I$.  
 On the other hand, 
since the ramification degree of $\ell$ in $\Q(E[p])/\Q$ is divisible by $p$, 
$I$ acts nontrivially on $E[p]$, and so we have $E[p]^I = \Psi$.    
Hence $E[p^n]^I$ is cyclic of order $p^n$ for every $n$.  
In particular,  multiplication by $p$ gives an $I$-equivariant isomorphism 
$E[p^2]^I/\Psi \isom \Psi$.  Therefore, we have $I$-equivariant isomorphisms
$$
E'[p] = (E/\Psi)[p] \cong E[p^2]^I/\Psi \times E[p]/\Psi \cong \Psi \times \Phi.$$
Since $I$ 
acts trivially on both $\Phi$ and $\Psi$, it acts trivially on $E'[p]$, so
$\Q(E'[p])/K$ is unramified above $\ell$.
Since $[K:\Q]$ is prime to $p$, 
it follows that the ramification degree of $\ell$ in $\Q(E'[p])/\Q$ is prime to $p$.
\end{proof}

\begin{rem}
\label{TatePeriod}
Lemma \ref{ellram} can also be proved by studying how the Tate period for $E$ 
over $\Q_{\ell}$ changes under the isogeny $E \to E'$.   The advantage of 
the above proof is that it also could be applied to the $p$-adic 
representations attached to modular forms, under suitable assumptions.  
\end{rem}
  
\begin{lem}
\label{ratio}
If $\psi\varphi^{-1}$ has order $2$, then $p \equiv 3 \pmod{4}$ and 
$\psi\varphi^{-1} = \omega^{(p-1)/2}$.
\end{lem}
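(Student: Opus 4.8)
The plan is to prove this by elementary character theory, using only the Weil-pairing relation $\psi\varphi=\omega$ and the hypothesis that $\chi:=\psi\varphi^{-1}$ has order $2$. Multiplying these two relations gives $\psi^2=\omega\chi$, and the whole argument amounts to reading off the shape of $\chi$ from this identity by projecting onto quadratic characters.

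First I would record the relevant structure of $\F_p^\times$. Since it is cyclic of even order $p-1$, it has a unique subgroup of index $2$, and hence a unique surjection $\epsilon\colon\F_p^\times\to\{\pm1\}$; by Euler's criterion $\epsilon(x)=x^{(p-1)/2}$. Composition with $\epsilon$ therefore carries the surjective character $\omega$ to $\omega^{(p-1)/2}$, which has order exactly $2$ (it is the quadratic character cutting out $\Q(\sqrt{p^*})$ with $p^*=(-1)^{(p-1)/2}p$). Moreover, since $\chi$ has order $2$ its image is the unique subgroup $\{\pm1\}\subseteq\F_p^\times$, on which $\epsilon$ acts by $-1\mapsto(-1)^{(p-1)/2}$.

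Now I would apply $\epsilon$ to $\psi^2=\omega\chi$. The left-hand side becomes trivial, since $\epsilon\circ\psi^2=(\epsilon\circ\psi)^2$ and $\epsilon\circ\psi$ is $\{\pm1\}$-valued, so $\omega^{(p-1)/2}\cdot(\epsilon\circ\chi)=1$. If $p\equiv1\pmod4$ then $(-1)^{(p-1)/2}=1$, so $\epsilon$ is trivial on $\{\pm1\}$, whence $\epsilon\circ\chi$ is trivial and $\omega^{(p-1)/2}=1$, contradicting that it has order $2$; hence $p\equiv3\pmod4$. When $p\equiv3\pmod4$ we have $(-1)^{(p-1)/2}=-1$, so $\epsilon$ restricts to the identity on $\{\pm1\}$ and $\epsilon\circ\chi=\chi$; the relation then reads $\omega^{(p-1)/2}\cdot\chi=1$, giving $\chi=\omega^{(p-1)/2}$ since $\omega^{(p-1)/2}$ has order $2$. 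There is no real obstacle here — the argument is formal once one has $\psi^2=\omega\chi$ — and the only point requiring care is the bookkeeping of the sign $(-1)^{(p-1)/2}$, i.e.\ whether $-1$ is a square in $\F_p^\times$, which is exactly what produces the congruence condition on $p$. In particular the elliptic curve enters only through $\psi\varphi=\omega$, so the same statement holds for any two $\F_p^\times$-valued characters of $G_\Q$ with product $\omega$.
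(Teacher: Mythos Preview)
Your proof is correct and is essentially the same argument as the paper's, just repackaged. The paper writes $\psi\varphi^{-1}=\omega\varphi^{-2}$ and raises to the power $(p-1)/2$, using $\varphi^{p-1}=1$ to obtain $(\psi\varphi^{-1})^{(p-1)/2}=\omega^{(p-1)/2}$; you instead write $\psi^2=\omega\chi$ and compose with the quadratic residue map $\epsilon(x)=x^{(p-1)/2}$, which is literally the same operation. Both then finish by observing that $\chi^{(p-1)/2}$ being nontrivial forces $(p-1)/2$ to be odd, whence $\chi^{(p-1)/2}=\chi$.
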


\begin{proof}
Since $\psi\varphi = \omega$ and $\varphi$ has order dividing $p-1$, we have
$$
(\psi\varphi^{-1})^{\frac{p-1}{2}} = (\omega\varphi^{-2})^{\frac{p-1}{2}} 
   = \omega^{\frac{p-1}{2}}   
$$
Since $\omega$ has order $p-1$, we see that $(\psi\varphi^{-1})^{(p-1)/2}$ is nontrivial.  
If $\psi\varphi^{-1}$ is quadratic, we conclude that $(p-1)/2$ is odd and 
hence that  $(\psi\varphi^{-1})^{(p-1)/2} = \psi\varphi^{-1}$. The lemma follows.
\end{proof}

\begin{prop}  
\label{ramcriterion}  
Suppose that $\psi\varphi^{-1}$ has order $2$.  Then 
$E$ is $p$-exceptional if and only if for every prime $\ell \neq p$, the ramification 
degrees of $\ell$ in $\Q(E[p])/\Q$ and $\Q(E'[p])/\Q$ are both prime to $p$. 
\end{prop}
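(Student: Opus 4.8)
The plan is to combine Proposition~\ref{criterion} with Lemma~\ref{ellram}, using the fact that $\Q(E[p])/\Q$ and $\Q(E'[p])/\Q$ differ (as extensions of $K$) precisely in their ramification. Recall from Proposition~\ref{criterion} that $E$ is $p$-exceptional if and only if $\Q(E[p]) = \Q(E'[p])$. So I would prove both directions by analyzing when these two fields can coincide.

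First, the ``only if'' direction. Suppose $E$ is $p$-exceptional, so $\Q(E[p]) = \Q(E'[p]) =: L$. Fix a prime $\ell \neq p$. By Remark~\ref{indisogrmk}, $[L:K] = p$. If the ramification degree of $\ell$ in $\Q(E[p])/\Q$ were divisible by $p$, then $\ell$ would be ramified in $L/K$; but then by Lemma~\ref{ellram} the ramification degree of $\ell$ in $\Q(E'[p])/\Q = L/\Q$ would be prime to $p$, so $\ell$ would be unramified in $L/K$ --- a contradiction. Hence the ramification degree of $\ell$ in $\Q(E[p])/\Q$ is prime to $p$, and by the symmetric argument the same holds for $\Q(E'[p])/\Q$.

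For the ``if'' direction I argue by contraposition: assume $E$ is \emph{not} $p$-exceptional, so $\Q(E[p]) \neq \Q(E'[p])$. Both are cyclic degree-$p$ extensions of $K$ (Remark~\ref{indisogrmk}), and $K$ is an abelian extension of $\Q$ with $[K:\Q]$ prime to $p$. The key point is that $\Q(E[p])$ and $\Q(E'[p])$ cannot \emph{both} be unramified everywhere over $K$ at primes away from $p$: the compositum $\Q(E[p])\cdot\Q(E'[p]) = \Q(E[p^2]^{?})$ --- more precisely $\Q(E[p],E'[p])$ --- would then be an abelian extension of $\Q$ of degree divisible by $p^2$ unramified outside $p$ over $K$, and one controls this using class field theory for $K$ together with the hypothesis $\psi\varphi^{-1}$ of order $2$, which by Lemma~\ref{ratio} pins down $\psi\varphi^{-1} = \omega^{(p-1)/2}$ and forces $K$ to be (contained in) a specific abelian field whose ray class group structure at $p$ is too small. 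Concretely, since $\det\rho_{E,p} = \omega$ has order $p-1$ and the mod-$p$ representation has image in a Borel, one tracks the ramification of $\Q(E[p])/K$ at $p$ and at the bad primes of $E$ to see that at least one such bad prime $\ell$ must contribute ramification degree divisible by $p$ to one of the two fields; equivalently, one shows that if every $\ell \neq p$ were unramified (to degree prime to $p$) in \emph{both} fields, the two cyclic degree-$p$ extensions of $K$ would have to agree, contradicting $\Q(E[p]) \neq \Q(E'[p])$.

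I expect the main obstacle to be the ``if'' direction: showing that the conjunction of the ramification conditions at \emph{all} $\ell \neq p$ forces $\Q(E[p]) = \Q(E'[p])$, rather than merely forcing the two fields to be ``unramified outside $p$'' over $K$. This requires quantifying how many distinct cyclic degree-$p$ extensions of $K$ can be unramified outside $p$ --- i.e. a bound on the $p$-rank of the ray class group of $K$ of conductor a power of $\p \mid p$ --- and checking that, given the constraint $\psi\varphi^{-1} = \omega^{(p-1)/2}$ from Lemma~\ref{ratio} together with the behavior of $\Q(E[p])/K$ and $\Q(E'[p])/K$ at $\p$ (both are ramified there, with the same inertia action up to the twist relating $\Psi$ and $\Phi$), there is \emph{at most one} such field. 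Then the hypothesis that $E$ is non-exceptional, giving two \emph{distinct} such extensions, is exactly what is contradicted, so some $\ell \neq p$ must be ramified to degree divisible by $p$ in one of them. The local analysis at $\p$, using the Tate parametrization or the filtration on $T_p(E)$ exactly as in the proof of Lemma~\ref{ellram}, is the technical heart; the rest is bookkeeping with Proposition~\ref{criterion}, Lemma~\ref{ellram}, and Lemma~\ref{ratio}.
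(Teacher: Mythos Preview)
Your ``only if'' direction is correct and matches the paper exactly.

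The ``if'' direction has a genuine gap. You correctly identify that the crux is showing there is at most one cyclic degree-$p$ extension of $K$, unramified outside $p$, on which $\Gal(K/\Q)$ acts via $\xi = \psi\varphi^{-1}$. But you do not carry this out, and the route you sketch---bounding the $p$-rank of a ray class group of $K$, plus ``local analysis at $\p$ using the Tate parametrization''---is not the right one. No local analysis at $p$ is needed at all, and working directly with the ray class group of $K$ (which can be a fairly large abelian field) is unnecessarily complicated.

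The paper's argument instead \emph{descends} from $K$ to the quadratic field $F$ cut out by $\xi$. By Lemma~\ref{ratio}, $\xi = \omega^{(p-1)/2}$, so $F = \Q(\sqrt{-p})$. Since $\Gal(K/F)$ acts trivially on $\Gal(L/K)$ and $\Gal(L'/K)$, both $L$ and $L'$ are abelian over $F$; as $[K:F]$ is prime to $p$, one can write $L = KJ$ and $L' = KJ'$ with $J, J'$ cyclic of degree $p$ over $F$, dihedral over $\Q$, and (under your hypothesis) unramified outside $p$. Now the decisive arithmetic input: the class number of $\Q(\sqrt{-p})$ is less than $p$, hence prime to $p$. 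Class field theory then gives that $F$ has exactly one cyclic degree-$p$ extension that is unramified outside $p$ and dihedral over $\Q$ (the first layer of the anticyclotomic $\Z_p$-extension). Thus $J = J'$, so $L = L'$, and Proposition~\ref{criterion} finishes.

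What you were missing is this descent to $\Q(\sqrt{-p})$ and the class-number bound; without them, your proposed uniqueness statement over $K$ is unsubstantiated.
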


\begin{proof}   
Let $L = \Q(E[p])$ and  $L' = \Q(E[p])$.  By Remark \ref{indisogrmk}, $L$ and $L'$ 
are cyclic extensions of $K$ of degree $p$.  

Suppose first that $E$ is $p$-exceptional, and $\ell \ne p$.  
By Lemma \ref{ellram}, the ramification degree of $\ell$ in at least one of 
$L/\Q$ and $L'/\Q$ is prime to $p$.
But by Proposition \ref{criterion} we have $L = L'$, so 
the ramification degrees of $\ell$ in $L/\Q$ and $L'/\Q$ must both be prime to $p$.

Now suppose that for every prime $\ell \neq p$, the ramification 
degrees of $\ell$ in $L/\Q$ and $L'/\Q$ are both prime to $p$.
Let $\xi = \psi\varphi^{-1}$.
Since $\xi = \xi^{-1}$,  the action of $\Gal(K/\Q)$ on both 
$\Gal(L/K)$ and $\Gal(L'/K)$  is given by $\xi$. Let $F$ denote the 
quadratic extension of $\Q$ corresponding to $\xi$.  Then $F \subset K$, and 
$F = \Q(\sqrt{-p})$ by Lemma \ref{ratio}.  We can regard $\xi$ as a character of $\Gal(F/\Q)$.   
Since $\Gal(K/F)$ acts trivially on $\Gal(L/K)$ and $\Gal(L'/K)$, it follows that 
$L$ and $L'$ are abelian extensions of $F$. Since $[K:F]$ is prime to $p$, 
there exist cyclic extensions $J$ and $J'$ of $F$ of degree $p$ such that $L = KJ$ and  
$L' = KJ'$.  Now $\Gal(F/\Q)$ acts on both 
$\Gal(J/F)$ and $\Gal(J'/F)$ by the character $\xi$, so $J$ and $J'$ are dihedral 
extensions of $\Q$ of degree $2p$.

By our assumption on the ramification of primes $\ell \ne p$, 
the extensions $J/F$ and $J'/F$ can ramify only at primes above $p$.
The class number of $F = \Q(\sqrt{-p})$ is not divisible by $p$ 
(because it is less than $p$; see for example \cite[page 365]{dirichlet}).  
Hence, by class field theory, one sees that $F$ has only one cyclic extension of 
degree $p$ that is both unramified outside of $p$ and dihedral over $\Q$. 
(This extension is the first layer of the so-called ``anticyclotomic" $\Z_p$-extension 
of $F$.)  Therefore, we must have $J = J'$, and  hence $L = L'$.  
Now $E$ is $p$-exceptional by Proposition \ref{criterion}.  
\end{proof} 

Let $\Deltamin(E)$ and $\Deltamin(E')$ denote the discriminants of minimal integral 
models for $E$ and $E'$, respectively.  

\begin{thm}
\label{Deltacriterion}
Assume that $\psi\varphi^{-1}$ has order $2$ and that $E$ has semistable reduction 
at all primes $\ell$ dividing the conductor of $E$, except possibly $\ell = p$.  
Then $E$ is $p$-exceptional if and only if $\Deltamin(E')/\Deltamin(E) = p^a w^p$  
for some $a \in \Z$ and $w \in \Q^{\times}$.   
\end{thm}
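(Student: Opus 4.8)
The plan is to translate both sides of the asserted equivalence into congruences for the valuations $\ord_\ell(\Deltamin(E))$ and $\ord_\ell(\Deltamin(E'))$ at the primes $\ell \neq p$, and then to quote Proposition \ref{ramcriterion}. First I would note that, since $p \ge 7$ is odd, the condition $\Deltamin(E')/\Deltamin(E) = p^a w^p$ with $a \in \Z$ and $w \in \Q^\times$ is equivalent to the congruences $\ord_\ell(\Deltamin(E')) \equiv \ord_\ell(\Deltamin(E)) \pmod p$ for every prime $\ell \neq p$: the $p$-part of the ratio is absorbed into $p^a$, and the sign causes no difficulty because $p$ is odd. Since $\psi\varphi^{-1}$ has order $2$ by hypothesis, Proposition \ref{ramcriterion} applies and tells us that $E$ is $p$-exceptional exactly when, for every $\ell \neq p$, the ramification degrees of $\ell$ in both $\Q(E[p])/\Q$ and $\Q(E'[p])/\Q$ are prime to $p$. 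So it suffices to prove, one prime $\ell \neq p$ at a time, that these two ramification degrees are both prime to $p$ if and only if $\ord_\ell(\Deltamin(E)) \equiv \ord_\ell(\Deltamin(E')) \pmod p$.

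Fix $\ell \neq p$. Because $E$ and $E'$ are $\Q$-isogenous they have the same reduction type at $\ell$, and by hypothesis that type is good or multiplicative. If the reduction is good, then both discriminant valuations vanish, and since $\ell \neq p$ the criterion of N\'eron--Ogg--Shafarevich makes $\ell$ unramified in both $p$-division fields, so both sides of the equivalence hold trivially. If the reduction is multiplicative, I would use the Tate parametrization: over $\Q_\ell$ (after the harmless unramified quadratic twist in the nonsplit case) $E$ acquires a Tate parameter $q$ with $\ord_\ell(q) = \ord_\ell(\Deltamin(E))$, and $E[p]$ is generated by $\zeta_p$ together with a $p$-th root of $q$. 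As $\Q_\ell(\zeta_p)/\Q_\ell$ is unramified (here $\ell \neq p$ enters again), the ramification of $\ell$ in $\Q(E[p])/\Q$ is controlled entirely by $\Q_\ell(q^{1/p})/\Q_\ell$, which is unramified when $p \mid \ord_\ell(q)$ and totally ramified of degree $p$ when $p \nmid \ord_\ell(q)$. Hence the ramification degree of $\ell$ in $\Q(E[p])/\Q$ is prime to $p$ precisely when $p \mid \ord_\ell(\Deltamin(E))$, and the identical computation for $E'$ gives: prime to $p$ precisely when $p \mid \ord_\ell(\Deltamin(E'))$.

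It remains to assemble these at each multiplicative prime $\ell$. If both ramification degrees are prime to $p$, then $p$ divides both $\ord_\ell(\Deltamin(E))$ and $\ord_\ell(\Deltamin(E'))$, so in particular the two valuations are congruent mod $p$. Conversely, suppose the two valuations are congruent mod $p$. By Lemma \ref{ellram} at least one of the two ramification degrees is already prime to $p$, hence at least one of the valuations is divisible by $p$; the congruence then forces the other to be divisible by $p$ too, so both ramification degrees are prime to $p$. Together with Proposition \ref{ramcriterion} this yields the theorem. I expect the only place where care is genuinely needed is the bookkeeping around the nonsplit multiplicative case — verifying that twisting by the unramified quadratic character affects neither $\ord_\ell(\Deltamin)$ nor the ramification of the $p$-division field — and the use of isogeny-invariance of the reduction type so that the semistability hypothesis imposed on $E$ also governs $E'$; the Tate-curve ramification computation itself is completely standard.
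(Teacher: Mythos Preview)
Your proposal is correct and follows essentially the same route as the paper: both arguments reduce via Proposition \ref{ramcriterion} to a prime-by-prime statement, use the Tate parametrization to identify the $p$-divisibility of $\ord_\ell(\Deltamin(E))$ with the ramification degree of $\ell$ in $\Q(E[p])/\Q$ being prime to $p$ (handling the nonsplit case by an unramified quadratic twist), and then invoke Lemma \ref{ellram} to pass from ``congruent mod $p$'' to ``both divisible by $p$.'' Your write-up is in fact slightly more careful than the paper's in explicitly treating the good-reduction primes and noting the isogeny-invariance of the reduction type.
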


\begin{proof} 
Suppose first that $\ell \ne p$ is a prime where $E$ has split 
multiplicative reduction. Then $E$ is a Tate 
curve over $\Q_{\ell}$.  Let $q_{E,\ell}$ denote the corresponding Tate 
period for $E$.  Then we have 
$$
\Q_{\ell}(E[p]) =  \Q_{\ell}\big(\bmu_p,  \sqrt[p]{q_{E,\ell}}\big)
$$
and therefore the ramification degree for $\ell$ in $\Q(E[p])$ is divisible 
by $p$ if and only if $\ord_{\ell}\big(q_{E,\ell}\big) \not \equiv 0 \pmod{p}$.  
Furthermore, we have (Proposition VII.5.1(b) of \cite{silverman})   
$$
\ord_{\ell}\big(\Deltamin(E)\big) = - \ord_{\ell}\big(j(E)\big) = \ord_{\ell}\big(q_{E,\ell}\big).
$$
Thus, the ramification degree for $\ell$ in  $\Q(E[p])/\Q$ is divisible by $p$ 
if and only if $\ord_{\ell}\big(\Deltamin(E)\big)$ is not divisible by 
$p$.  
This criterion is also valid if $E$ has nonsplit multiplicative reduction 
at $\ell$, since both the ramification degree for $\ell$ in $\Q(E[p])$ and 
the power of $\ell$ dividing $\Deltamin(E)$ are unchanged by twisting $E$ by 
a quadratic character that is unramified at $\ell$.  

By Lemma \ref{ellram},  at least  one of the integers $\ord_{\ell}\big(\Deltamin(E)\big)$,  
$\ord_{\ell}\big(\Deltamin(E')\big)$ is divisible by $p$.  Therefore, both are divisible 
by $p$ if and only if their difference is divisible by $p$.   Now apply 
Proposition \ref{ramcriterion}.   
\end{proof}

\section{Twisting $X_1(7)$ by characters}
\label{twistsect}

Fix a field $k$ of characteristic different from $7$. 
Suppose $\psi : G_k \to \F_7^\times$ is a homomorphism.
In this section we will construct the family 
of all elliptic curves over $k$ with a $k$-rational subgroup of order $7$ on which 
$G_k$ acts via the character $\psi$. 
The method of our construction is as follows.  When $\psi = 1$, we are parametrizing 
elliptic curves with a point of order $7$, so the 
desired elliptic curves are the fibers of the universal elliptic 
curve $\E_1$ over the modular curve $X_1(7)$ of genus zero.  
For general $\psi$, we twist 
the elliptic surface $\E_1$ to obtain the appropriate elliptic surface $\E_\psi$, and then 
the $A_v$ are the fibers of $\E_\psi$.
Theorem \ref{genthm} deals with the case where $\psi$ has order 
dividing $3$.  Since any character $\psi$ into $\F_7^\times$ can be written uniquely 
as the product of a character of order dividing $3$ and a character  of order dividing $2$
(namely, $\psi=\psi^4\psi^3$), 
we will obtain the family for a general $\psi$ as a quadratic twist of 
a family with a  cubic $\psi$, in Theorem \ref{moregenthm}.

\begin{defn}
If $E, E'$ are elliptic curves over $k$, and $P \in E(\ksep), P' \in E'(\ksep)$ are points 
of order $7$, we say that $\lambda : (E,P) \isom (E',P')$ is an isomorphism if $\lambda$ 
is an isomorphism from $E$ to $E'$ and $\lambda(P) = P'$.  If such a $\lambda$ 
exists, we say that $(E,P)$ and $(E',P')$ are isomorphic.  
If further $\lambda : E \isom E'$ is defined over $k$, then we say that 
$(E,P)$ and $(E',P')$ are isomorphic over $k$.
\end{defn}

\begin{lem}
\label{extralem}
Suppose $E, E'$ are elliptic curves over $k$, $P \in E(\ksep), P' \in E'(\ksep)$ are points 
of order $7$, and $(E,P)$ is isomorphic to $(E',P')$.
\begin{enumerate}
\item
The isomorphism $\lambda : (E,P) \isom (E',P')$ is unique.
\item
Suppose that the groups $\Psi$ and $\Psi'$ generated by $P$ and $P'$, respectively, 
are stable under $G_k$.  
Then $(E,P)$ and $(E',P')$ are isomorphic over $k$ if and only if the two characters
$$
G_k \to \Aut(\Psi) \isom \F_7^\times, \quad G_k \to \Aut(\Psi') \isom \F_7^\times
$$
are equal.
\end{enumerate}
\end{lem}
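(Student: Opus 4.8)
The plan is to prove part (i) first, then deduce part (ii) from it. For (i), the key point is that an isomorphism $\lambda : (E,P) \isom (E',P')$ carries the pair $(E,P)$ to $(E',P')$ rigidly: if $\lambda_1, \lambda_2$ are two such isomorphisms, then $\mu := \lambda_2^{-1}\lambda_1$ is an automorphism of $E$ fixing the point $P$ of order $7$. The automorphism group of an elliptic curve (over any field, once a base point is fixed) has order dividing $24$, and in particular is cyclic of order $2$, $4$, or $6$ except in characteristic $2$ or $3$; in any case an automorphism fixing a point of order $7 > $ these orders, indeed fixing \emph{any} nonzero point, must act as the identity on a subgroup that it doesn't have room to permute nontrivially. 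Concretely, $\mu$ acts on $\Psi = \langle P\rangle \cong \F_7$ as multiplication by some $\zeta \in \F_7^\times$, and since $\mu(P)=P$ we get $\zeta = 1$, so $\mu$ fixes $E[7]$ and in particular fixes more than one point; the only automorphism of an elliptic curve fixing a nonzero torsion point other than $2$-torsion is the identity (as $[-1]$ is the only nontrivial automorphism that could fix a point, and it fixes only the $2$-torsion). Hence $\mu = \mathrm{id}$ and $\lambda_1 = \lambda_2$. This argument never uses that $\lambda$ is defined over $k$, so the uniqueness is as a morphism over $\ksep$.

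For part (ii), write $\psi, \psi'$ for the two characters $G_k \to \F_7^\times$ giving the action on $\Psi, \Psi'$. Fix the (unique, by (i)) isomorphism $\lambda : (E,P) \isom (E',P')$ over $\ksep$; it automatically satisfies $\lambda(\Psi) = \Psi'$. For $\sigma \in G_k$, apply $\sigma$ to $\lambda$: since $E, E', P$ (as a point of $E(\ksep)$) need not be $k$-rational—wait, $E$ and $E'$ \emph{are} over $k$, but $P$ is only in $E(\ksep)$—the conjugate $\lambda^\sigma : E \isom E'$ is again an isomorphism of elliptic curves over $\ksep$, and $\lambda^\sigma(P^\sigma) = (\lambda(P))^\sigma = (P')^\sigma$. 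Now $P^\sigma = \psi(\sigma) P$ and $(P')^\sigma = \psi'(\sigma) P'$, so $\lambda^\sigma(\psi(\sigma)P) = \psi'(\sigma) P'$, i.e. $\lambda^\sigma$ is an isomorphism $(E, \psi(\sigma)P) \isom (E', \psi'(\sigma) P')$. On the other hand $\psi(\sigma)\lambda$ — more precisely, post-composing $\lambda$ with multiplication by $\psi(\sigma)$ on $E'$, or pre-composing with multiplication by $\psi(\sigma)$ on $E$ — is an isomorphism $(E, \psi(\sigma)^{-1}\cdot(\text{something}))$… the cleanest bookkeeping: $\lambda^\sigma$ and $[\psi'(\sigma)]\circ\lambda\circ[\psi(\sigma)]^{-1}$ are both isomorphisms $(E,P)\isom(E',P')$ precisely when $\psi(\sigma)=\psi'(\sigma)$; I will set this up so that uniqueness from (i) forces $\lambda^\sigma = \lambda$ for all $\sigma$ iff $\psi = \psi'$.

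So the argument for (ii) runs as follows. Suppose $\psi = \psi'$. Then for each $\sigma$, both $\lambda^\sigma$ and $\lambda$ are isomorphisms $(E,P) \isom (E',P')$ over $\ksep$ (using $P^\sigma = \psi(\sigma)P$, $(P')^\sigma = \psi(\sigma)P'$ and that multiplication by $\psi(\sigma) \in \F_7^\times$ commutes with $\lambda$ as a group homomorphism), so by (i) they are equal; thus $\lambda$ is $G_k$-equivariant, hence defined over $k$, giving an isomorphism over $k$. Conversely, if $(E,P) \cong (E',P')$ over $k$, the unique isomorphism $\lambda$ from (i) is Galois-stable, so for all $\sigma$, $\psi'(\sigma) P' = (P')^\sigma = (\lambda(P))^\sigma = \lambda^\sigma(P^\sigma) = \lambda(\psi(\sigma)P) = \psi(\sigma)\lambda(P) = \psi(\sigma) P'$, forcing $\psi(\sigma) = \psi'(\sigma)$ since $P'$ has order $7$. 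I expect part (i)—pinning down that an automorphism of an elliptic curve fixing a point of order $7$ is the identity, valid over an arbitrary field of characteristic $\neq 7$ including characteristics $2$ and $3$ where $\Aut(E)$ can be large—to be the only real content; once (i) is in hand, (ii) is pure Galois-descent bookkeeping. The mild subtlety to handle with care is that in characteristics $2, 3$ the automorphism group is noncommutative of order up to $24$, but every element of it has order dividing $12$, and none of order $> 1$ can fix a point of exact order $7$ because the orbit of that point under $\langle\mu\rangle$ would have size dividing $\mathrm{ord}(\mu)$ yet the action on $\F_7 \setminus \{0\}$ by a single automorphism is multiplication by a root of unity in $\F_7^\times$, which fixes $P$ only if that root of unity is $1$, and then $\mu$ fixes all of $E[7]$, in particular a non-$2$-torsion point, forcing $\mu = \mathrm{id}$.
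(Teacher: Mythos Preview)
Your treatment of part (ii) is correct and essentially identical to the paper's: once (i) is in hand, comparing $\lambda^\sigma(P)$ with $\lambda(P)$ shows $\lambda^\sigma = \lambda$ precisely when $\psi(\sigma)=\psi'(\sigma)$.

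Part (i), however, has a genuine gap. Twice you pass from ``$\mu$ fixes $P$'' (equivalently, $\mu$ acts trivially on the cyclic group $\Psi = \langle P\rangle$) to ``$\mu$ fixes all of $E[7]$''; this is not justified, since $E[7]$ has order $49$ and you have only controlled the action on the order-$7$ subgroup $\Psi$. You then appeal to the claim that a nontrivial automorphism cannot fix a nonzero point outside $E[2]$. That claim is false already in characteristic~$0$: when $j(E)=0$, the order-$3$ automorphism $\omega$ has $\ker(\omega-1)$ of order $3$, so $\omega$ fixes two nonzero $3$-torsion points. Thus neither version of your argument actually forces $\mu=\mathrm{id}$, and the closing remarks about characteristics $2,3$ repeat the same unjustified jump from ``fixes $\Psi$'' to ``fixes $E[7]$''.

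The paper supplies exactly the missing estimate. If $\epsilon\ne 1$ is any automorphism of $E$, then $\epsilon-1$ is a nonzero isogeny with $P\in\ker(\epsilon-1)$, so
\[
7 \;\le\; |\ker(\epsilon-1)| \;\le\; \deg(\epsilon-1) \;=\; (\epsilon-1)(\bar\epsilon-1) \;=\; 2 - (\epsilon+\bar\epsilon) \;\le\; 4,
\]
a contradiction. The last inequality holds in every characteristic because every automorphism satisfies $x^2 - tx + 1 = 0$ with $t\in\Z$ and $t^2\le 4$. This single degree bound replaces your case analysis and handles characteristics $2$ and $3$ uniformly.
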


\begin{proof}
If $\lambda, \lambda' : (E,P) \isom (E',P')$ are isomorphisms over $\ksep$, 
then $\epsilon = \lambda^{-1}\circ\lambda'$ is an automorphism of $E$ 
fixing $P$, i.e., $(\epsilon-1)(P) = 0$.  But then (viewing $\epsilon$ as 
a root of unity in an imaginary quadratic field) if $\epsilon \ne 1$ we have
$$
7 \le |\ker(\epsilon-1)| = \deg(\epsilon-1) = (\epsilon-1)(\bar\epsilon-1) = 2 - (\epsilon+\bar\epsilon) \le 4
$$
which is impossible.  This proves (i).

For (ii), let $\psi$ and $\psi'$ be the characters giving the action of $G_k$ on $\Psi$ and $\Psi'$, 
respectively.  If $\sigma \in G_k$, then $\lambda^\sigma : E \isom E'$ is an isomorphism, 
and 
$$
\lambda^\sigma(P) = \lambda^\sigma(\psi^{-1}(\sigma)P^\sigma) 
   = \psi^{-1}(\sigma)\lambda(P)^\sigma= \psi^{-1}(\sigma)(P')^\sigma = \psi^{-1}(\sigma)\psi'(\sigma)P'
$$
If $\psi(\sigma) = \psi'(\sigma)$, then $\lambda^\sigma : (E,P) \isom (E',P')$ is an isomorphism, so 
$\lambda^\sigma = \lambda$ by part (i).  On the other hand, if $\psi(\sigma) \ne \psi'(\sigma)$, 
then $\lambda^\sigma(P) \ne \lambda(P)$, so $\lambda^\sigma \ne \lambda$.  This proves (ii).
\end{proof}

If $u \in \ksep$, define a curve $E_u$ over $k(u)$ by
\begin{equation}
\label{univ7}
E_u : y^2 - (u^2-u-1)xy-(u^3-u^2)y = x^3-(u^3-u^2)x^2.
\end{equation}
The discriminant of $E_u$ is
\begin{equation}
\label{Dform}
\Delta(E_u) = u^7(u-1)^7(u^3 - 8u^2 + 5u + 1). 
\end{equation}
The next result is \#15 in Table 3 on p.~217 of \cite{kubert}. 

\begin{thm}[\cite{kubert}]
\label{x1of7thm}
Let $E_u$ be as above.
\begin{enumerate}
\item
If $u \in k$ and $\Delta(E_u) \ne 0$, then $E_u$ is an elliptic curve over 
$k$ and $(0,0)$ is a point of order $7$ in $E(k)$.
\item
If $E$ is an elliptic curve over $k$ and $P\in E(k)$ 
is a point of order $7$ then there is a unique $u \in k$ such 
that $(E,P)$ is isomorphic over $k$ to $(E_u,(0,0))$. 
\end{enumerate}
\end{thm}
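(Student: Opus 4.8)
The plan is to reduce everything to \emph{Tate's normal form}. The classical fact I would use is: if $(E,P)$ is a pair consisting of an elliptic curve over $k$ and a point $P$ of order at least $4$, then $E$ has a \emph{unique} Weierstrass model
\[
E_{b,c} : y^2 + (1-c)xy - by = x^3 - bx^2
\]
in which $P = (0,0)$. This model is obtained by first translating $P$ to the origin (which kills the constant term), then applying a shear $y \mapsto y + \lambda x$ so that the tangent to $E$ at $P$ becomes the line $y = 0$ --- legitimate because $2P \neq O$ --- which forces the coefficient of $x$ on the right side to vanish as well, and finally rescaling $(x,y) \mapsto (s^2 x, s^3 y)$ to bring the equation to the displayed shape. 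The scaling factor $s$ is uniquely determined, because when $2P \neq O$ and $3P \neq O$ the coefficient of $x^2$ on the right and the coefficient of $y$ on the left in the intermediate model are both nonzero, and this same rigidity gives the uniqueness statement. In the final model $b \neq 0$, since otherwise $(0,0)$ would be a singular point of $E_{b,c}$. I would first write this out carefully, tracking the admissible changes of variables and checking that each normalization is both available and forced precisely under $\mathrm{ord}(P)\geq 4$. Note that the curve \eqref{univ7} is \emph{already} in this form, with $b = u^3-u^2$ and $c = u^2-u$; in particular $b/c = u$ whenever $\Delta(E_u)\neq 0$ (which forces $u\neq 0,1$). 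All the coordinate changes above are defined over $k$ when $P \in E(k)$, so the resulting isomorphism is a $k$-isomorphism.

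The second ingredient is a short group-law computation on the universal curve $E_{b,c}$, carried out over $k(b,c)$. Using that the tangent at $P = (0,0)$ is $y = 0$, that the secant through $P$ and $2P$ is $y = cx$, and that the secant through $P$ and $3P$ is $y = ((b-c)/c)\,x$ --- each meeting $E_{b,c}$ in an explicit third point --- one computes
\[
2P = (b,\, bc), \qquad 3P = (c,\, b-c), \qquad -3P = (c,\, c^2), \qquad x(4P) = \frac{b(b-c)}{c^2}.
\]
Since $4P = 3P$ would force $P = O$, we have $4P = -3P$, i.e.\ $7P = O$, if and only if $x(4P) = x(-3P)$, i.e.\ if and only if $b(b-c) = c^3$, equivalently $b^2 - bc - c^3 = 0$. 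Substituting $b = u^3-u^2$, $c = u^2-u$ shows this relation holds identically in $u$; conversely, if it holds with $b \neq 0$ then necessarily $c \neq 0$ (otherwise $b^2 = 0$), and then, setting $u := b/c$, one reads off $c = u^2-u$ and hence $b = u^3-u^2$.

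These two facts yield the theorem. For (i): if $\Delta(E_u)\neq 0$ then $E_u$ is a smooth Weierstrass curve, hence an elliptic curve over $k$, and $(0,0)$ is a smooth point lying on it; since $(b,c) = (u^3-u^2,\,u^2-u)$ satisfies $b^2-bc-c^3 = 0$ with $b\neq 0$, the computation gives $7\cdot(0,0) = O$, and since $(0,0)\neq O$ and $7$ is prime, $(0,0)$ has order exactly $7$. For (ii): given $(E,P)$ with $P$ of order $7$, put $E$ in Tate normal form to obtain a unique $(b,c)$ with $b\neq 0$; from $7P = O$ the relation $b^2-bc-c^3 = 0$ holds, so $c\neq 0$, and with $u := b/c$ we get $(b,c) = (u^3-u^2,\,u^2-u)$, so this model of $E$ is literally $E_u$ with $P = (0,0)$, giving a $k$-isomorphism $(E,P)\isom(E_u,(0,0))$. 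Uniqueness of $u$: if also $(E,P)\isom(E_{u'},(0,0))$ over $k$, then by uniqueness of the Tate normal form the pair $(b,c)$ is the same, whence $u' = b/c = u$; and by Lemma \ref{extralem}(i) the isomorphism itself is unique.

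The step I expect to demand the most care is not the group-law computation --- which, as above, needs only two secant-line intersections and no duplication formula --- but the existence-and-uniqueness of the Tate normal form: one has to verify that each normalization is simultaneously possible and rigid exactly when $\mathrm{ord}(P)\geq 4$, and that no spurious vanishing of the discriminant is introduced along the way. It is also worth confirming directly that the discriminant of $E_{b,c}$, evaluated at $b = u^3-u^2$, $c = u^2-u$, equals $u^7(u-1)^7(u^3-8u^2+5u+1)$ as asserted in \eqref{Dform}, since this pins down precisely which $u$ give honest elliptic curves.
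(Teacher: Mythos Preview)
The paper does not prove this theorem at all: it simply records the statement as \#15 in Table~3 of Kubert's paper and moves on. Your proposal therefore supplies what the paper omits, and the route you take --- reduce to the Tate normal form $E_{b,c}$, compute $2P,3P,-3P,x(4P)$ by two secant intersections, and read off the order-$7$ condition $b^2 - bc - c^3 = 0$ --- is exactly the standard argument underlying Kubert's tables. The computations you outline are correct (in particular $x(4P) = b(b-c)/c^2$ and the identity $b^2 - bc - c^3 = 0$ under $b = u^3-u^2$, $c = u^2-u$ both check), and the recovery of $u$ as $b/c$ from the relation is clean. The only cautionary remark is that the uniqueness of the Tate normal form, which you flag as the step deserving care, is genuinely where the argument lives: you should track that the rescaling making $a_2 = a_3$ (equivalently $-b = -b$ in your notation) is available and unique precisely because $3P \neq O$ forces both to be nonzero. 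Your appeal to Lemma~\ref{extralem}(i) for uniqueness of the isomorphism is a slight over-reach --- the theorem only asks for uniqueness of $u$, which already follows from uniqueness of $(b,c)$ --- but it does no harm.
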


Define a linear fractional transformation
\begin{equation}
\label{etadef}
\eta(v)= 1/(1-v).
\end{equation}
The following lemma will be used in the proofs of Theorems \ref{genthm} and \ref{finalfinal} below.  

\begin{lem}
\label{autlem}
Suppose $u \in \ksep$ and $\Delta(E_u) \ne 0$.
Then there is a unique isomorphism defined over $k(u)$:
$$
(E_{\eta(u)},2\cdot(0,0)) \isom (E_u,(0,0)).
$$
\end{lem}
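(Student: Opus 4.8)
The plan is to deduce existence from Theorem~\ref{x1of7thm}(ii) applied over the field $k(u)$, pin down the resulting parameter by an explicit computation, and get uniqueness for free. Note first that $Q := 2\cdot(0,0)$ has exact order $7$ on $E_{\eta(u)}$ (since $(0,0)$ does and $2$ is prime to $7$), that $Q \in E_{\eta(u)}(k(u))$, and that $E_{\eta(u)}$ really is an elliptic curve: the hypothesis $\Delta(E_u)\neq 0$ forces $u\neq 1$, so $\eta(u)$ is defined and finite, and since $v\mapsto\eta(v)$ sends $v^3-8v^2+5v+1$ to $-(v^3-8v^2+5v+1)/(1-v)^3$ and permutes $\{0,1,\infty\}$, it permutes the zeros of the polynomial $\Delta(E_v)$; hence $\Delta(E_{\eta(u)})\neq 0$ as well. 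Granting that $(E_{\eta(u)},Q)$ and $(E_u,(0,0))$ are isomorphic over $k(u)$, the uniqueness of the isomorphism is immediate from Lemma~\ref{extralem}(i) (with base field $k(u)$). So the content of the lemma is the existence of a $k(u)$-rational isomorphism.

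For existence I would apply Theorem~\ref{x1of7thm}(ii), with the base field there replaced by $k(u)$, to the pair $(E_{\eta(u)},Q)$. This produces a unique $u''\in k(u)$ together with an isomorphism over $k(u)$
\[
  (E_{\eta(u)},\,2\cdot(0,0))\;\isom\;(E_{u''},(0,0)),
\]
and it then suffices to prove $u''=u$. To identify $u''$ I would make the normalization underlying Theorem~\ref{x1of7thm}(ii) explicit: compute the coordinates of $Q=2\cdot(0,0)$ on $E_{\eta(u)}$ from the tangent line to $E_{\eta(u)}$ at $(0,0)$ (rational functions of $u$), apply the unique admissible change of Weierstrass coordinates that moves $Q$ to the origin and brings the model into Tate normal form, and read off the value of the parameter from the transformed equation. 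That value is a rational function of $u$, and $u''=u$ then amounts to a single rational-function identity in $u$, valid precisely on the locus $\Delta(E_u)\neq 0$; it can be checked by hand or with a computer algebra system. (As a consistency check, $u''=u$ says exactly that $\eta$ realizes the ``double the marked point'' automorphism of $X_1(7)$; since $2^3\equiv 1\pmod 7$ this automorphism has order $3$, and indeed $\eta^3=\mathrm{id}$.)

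The only genuine labor is the computation in the second paragraph — doubling $(0,0)$ and renormalizing to Tate form — and I expect marshalling that calculation cleanly to be the main obstacle. There is no conceptual difficulty: the uniqueness in Theorem~\ref{x1of7thm}(ii) already guarantees that the bookkeeping must close up to give $u''=u$, so the task is purely to carry out and simplify the formulas.
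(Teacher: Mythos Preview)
Your proposal is correct. The paper's proof is more compressed: it simply exhibits the explicit change of Weierstrass coordinates
\[
(x,y) \;\longmapsto\; \bigl((u-1)^4 x+ u^2-u,\; (u-1)^6 y + (u-1)^4(u^2-2u)x + u^4-2u^3+u^2\bigr)
\]
and asserts that a direct computation shows this is an isomorphism $(E_{\eta(u)},2\cdot(0,0)) \isom (E_u,(0,0))$, with uniqueness coming from Lemma~\ref{extralem}(i) exactly as you say. Your route through Theorem~\ref{x1of7thm}(ii) is the natural way to \emph{discover} this map: the Tate-normalization procedure you outline (double $(0,0)$ on $E_{\eta(u)}$, translate the result to the origin, rescale to Tate form, read off the parameter) is precisely what produces the coefficients above, and the identity $u''=u$ you would check is equivalent to verifying that this substitution carries one Weierstrass equation to the other. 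So the underlying calculation is the same; your framing has the advantage of explaining where the formula comes from, while the paper's has brevity and records the explicit map, which is convenient when it is reused later (cf.\ the proof of Lemma~\ref{etaautlem}).
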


\begin{proof}
A direct computation shows that the map 
$$
(x,y) \;\mapsto\; ((u-1)^4 x+ u^2-u, (u-1)^6 y + (u-1)^4(u^2-2u)x + u^4-2u^3+u^2)
$$
is such an isomorphism.  Uniqueness follows from Lemma \ref{extralem}(i).  
\end{proof}

The following lemma is taken from a paper of Washington \cite[pp.\ 64--65]{washington}.

\begin{lem}[Washington \cite{washington}]
\label{lcwlem}
  Suppose that $K/k$ is a cyclic cubic extension, and $\sigma$ is a generator of 
$\Gal(K/k)$.  Then there is a $t \in k$ such that 
\begin{enumerate}
\item $K$ is the splitting field of the polynomial 
$
f(x) := x^3 - (t+3)x^2 + tx + 1,
$
\item
if $\gamma$ is a root of $f$ then $\gamma^\sigma = \eta(\gamma)$,
where $\eta$ is the linear fractional transformation defined by  \eqref{etadef}.
\end{enumerate}
\end{lem}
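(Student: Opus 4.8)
The plan is to reduce the lemma to producing a single element $\gamma \in K\setminus k$ with $\gamma^\sigma = \eta(\gamma)$; once that is done, (i) and (ii) fall out of a short symmetric-function computation. Such a $\gamma$ is then obtained from the matrix form of Hilbert's Theorem~90, and the only genuine work is checking that the vector supplied by Theorem~90 does not degenerate to a fixed point of $\eta$.

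First I would record that $\eta$ has order $3$: the matrix $N := \bigl(\begin{smallmatrix} 0 & -1 \\ 1 & -1 \end{smallmatrix}\bigr) \in \GL2(k)$ induces $z \mapsto 1/(1-z) = \eta(z)$ on $\P$, has characteristic polynomial $\lambda^2 + \lambda + 1$, and satisfies $N^3 = I_2 \ne N$. Hence $\langle\eta\rangle$ is cyclic of order $3$, and for any $\gamma \notin \{0,1,\infty\}$ the $\langle\eta\rangle$-orbit of $\gamma$ is $\{\gamma,\ \tfrac{1}{1-\gamma},\ \tfrac{\gamma-1}{\gamma}\}$. A short computation of symmetric functions gives
\[
\bigl(x-\gamma\bigr)\bigl(x-\tfrac{1}{1-\gamma}\bigr)\bigl(x-\tfrac{\gamma-1}{\gamma}\bigr) = x^3 - (t+3)\,x^2 + t\,x + 1, \qquad t := \frac{\gamma}{1-\gamma} + \gamma - 1 - \frac{1}{\gamma},
\]
the point being that the product of the three roots is identically $\tfrac{\gamma-1}{1-\gamma} = -1$ and that $e_1 - e_2 = 3$ identically in $\gamma$, which is exactly what forces the coefficients into this shape. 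Now assume $\gamma \in K \setminus k$ with $\gamma^\sigma = \eta(\gamma)$. Because $\eta$ has coefficients in $k$, the three roots above are then $\gamma, \gamma^\sigma, \gamma^{\sigma^2}$, all lying in $K$, and $t$ is a symmetric function of the $\Gal(K/k)$-conjugates of $\gamma$, so $t \in k$. Since $[K:k] = 3$ is prime and $\gamma \notin k$, we have $K = k(\gamma)$; hence $f(x) = x^3 - (t+3)x^2 + tx + 1$ is the minimal polynomial of $\gamma$ over $k$ and $K$ is its splitting field, giving (i), while (ii) follows because each root of $f$ is $\gamma^{\sigma^i}$ for some $i$ and $(\gamma^{\sigma^i})^\sigma = \gamma^{\sigma^{i+1}} = \eta^{i+1}(\gamma) = \eta(\gamma^{\sigma^i})$.

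So everything reduces to finding $\gamma \in K \setminus k$ with $\gamma^\sigma = \eta(\gamma)$. Writing $\gamma = [v]$ for a nonzero column vector $v \in K^2$, the equation $\gamma^\sigma = \eta(\gamma)$ says precisely that $v^\sigma$ and $Nv$ are proportional, so it is enough to find a nonzero $v \in K^2$ with $v^\sigma = Nv$. Here I would use Hilbert~90 for $\GL2$ in its explicit ``averaging'' form: since $N$ has entries in $k$ and $N^3 = I_2$, the vector $v := w + N^{-1}w^\sigma + N^{-2}w^{\sigma^2}$ satisfies $v^\sigma = Nv$ for every $w \in K^2$, and the $k$-linear map $w \mapsto v$ is not identically zero (taking $w = (w_1, 0)^\top$, its first coordinate is the $K$-combination $w_1 + (N^{-1})_{11}w_1^\sigma + (N^{-2})_{11}w_1^{\sigma^2}$ of $1, \sigma, \sigma^2$, whose coefficient of $w_1$ is $1$, so it cannot vanish identically by Dedekind's independence of automorphisms). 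Thus $W := \{v \in K^2 : v^\sigma = Nv\}$ is nonzero; in fact $\tau \cdot v := N^{-1}v^\tau$ defines a semilinear action of $\Gal(K/k)$ on $K^2$ with fixed module $W$, so by Galois descent $W$ is a $k$-form of $K^2$ and $\dim_k W = 2$.

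It remains to arrange that the resulting $\gamma = [v]$ is a genuine generator, i.e.\ that $\gamma \notin k$. If $[v] \in \P(k)$, then $\sigma$ fixes it, so $\eta([v]) = [v]^\sigma = [v]$ and $[v]$ is one of the (at most two) $k$-rational fixed points of $\eta$ on $\P$; one checks that the corresponding $v \in W$ fill at most two lines through the origin of $W$ (the $k$-rational eigenlines of $N$). Since $\dim_k W = 2$, these lines do not exhaust $W$, so I can choose $v \in W$ off them; then $\gamma := [v] \in \P(K) \setminus \P(k)$, hence $\gamma \ne 0, 1, \infty$ as well, so $\gamma \in K \setminus k$ and $\gamma, \eta(\gamma), \eta^2(\gamma)$ are the roots of a genuine separable cubic, which is what was needed. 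The only part of this argument that is not either formal or a routine calculation is this last one: ensuring that the vector produced by Hilbert~90 does not collapse onto a fixed point of $\eta$ — the subtlety that appears exactly when $x^2 - x + 1$ has a root in $k$.
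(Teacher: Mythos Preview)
Your proof is correct, and the reduction step (producing $\gamma\in K\setminus k$ with $\gamma^\sigma=\eta(\gamma)$, then reading off $t$ from the symmetric functions) is exactly what the paper does as well. The difference lies in how that $\gamma$ is manufactured.

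The paper's argument is purely linear-algebraic over $k$: it starts with an \emph{arbitrary} primitive element $\alpha$ of $K/k$, notes that the four elements $1,\alpha,\alpha^\sigma,\alpha\alpha^\sigma$ are $k$-linearly dependent, and hence $\alpha^\sigma=\phi(\alpha)$ for some $\phi\in\PGL_2(k)$; one checks $\phi$ has order~$3$ and then, by normalizing a lift so that its minimal polynomial is $x^2+x+1$, conjugates $\tilde\phi$ to $\tilde\eta$ by some $\tilde\xi\in\GL_2(k)$ via rational canonical form. Then $\gamma:=\xi(\alpha)$ works. A pleasant feature of this route is that the non-degeneracy you had to argue separately comes for free: since $\alpha\notin k$ and $\xi\in\PGL_2(k)$, automatically $\gamma\notin k$, so no ``avoiding fixed points of $\eta$'' step is needed.

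Your route instead recognizes the problem as Galois descent for the $\Gal(K/k)$-semilinear structure on $K^2$ twisted by the cocycle $\sigma\mapsto N^{-1}$, produces the $2$-dimensional $k$-form $W$, and then argues that the degenerate vectors (those projecting to $\eta$-fixed points in $\BP^1$) occupy at most two $k$-lines in $W$, hence cannot exhaust it. This is more conceptual and generalizes cleanly (the same template would handle any finite-order $\eta\in\PGL_2(k)$), at the cost of invoking descent/Hilbert~90 for $\GL_2$ and having to dispose of the fixed-point case explicitly. Both arguments are short; the paper's is the more elementary of the two.
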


\begin{proof}
Choose $\alpha \in K$ such that $K=k(\alpha)$.  The set 
$\{1,\alpha,\alpha^\sigma,\alpha\alpha^\sigma\}$ is linearly dependent over $k$ 
(but $\{1,\alpha\}$ is not), so we can find a linear fractional 
transformation $\phi \in\PGL2(k)$ such that $\alpha^\sigma= \phi(\alpha)$.  
Note that $\phi^3$ fixes $\alpha$, $\alpha^\sigma$, and $\alpha^{\sigma^2}$, 
so $\phi^3 = 1$ in $\PGL2(k)$.

Let $\tilde\phi$ be an element in $\GL2(k)$ whose image under the map $\GL2(k) \to \PGL2(k)$ is $\phi$.   
We must have $\trace(\tilde\phi) \ne 0$.  Otherwise,  $\tilde\phi^2$ would be a scalar matrix and we would then have 
$\alpha^{\sigma^2} = \alpha$.  Therefore we can choose the lift $\tilde\phi$ so that  $\trace(\tilde\phi) = -1$.  Then, writing $I$ for the identity in $\GL2(k)$, 
$$
\tilde\phi^2 + \tilde\phi = -\det(\tilde\phi)I, \quad \tilde\phi^3 = aI
$$
for some $a\in k$, so
$$
(1-\det(\tilde\phi))\tilde\phi = \tilde\phi + \tilde\phi^3 + \tilde\phi^2 = (a-\det(\tilde\phi))I.
$$
Since  $\alpha^\sigma \neq \alpha$,  $\tilde\phi$ cannot be a scalar matrix.  It follows that $\det(\tilde\phi) = 1$ and the minimal polynomial of $\tilde\phi$ over
$k$ is $x^2+x+1$. Let $\tilde\eta  =  \text{\scriptsize $\begin{pmatrix}0&-1\\1&-1\end{pmatrix}$}$, which is a   
lift to $\GL2(k)$ corresponding to 
$\eta$.  
Since $\tilde\eta$ has the same minimal polynomial as $\tilde\phi$, 
there is a $\tilde\xi\in\GL2(k)$ such that 
\begin{equation}
\label{bx}
\tilde\xi\tilde\phi\tilde\xi^{-1} = \tilde\eta.
\end{equation}
Set $\gamma = \xi(\alpha)$, where $\xi$ is the linear fractional transformation corresponding to $\tilde\xi$.    Then $k(\gamma) = k(\alpha) = K$, and by \eqref{bx} we have 
\begin{equation}
\label{gammas}
\gamma^\sigma = \eta(\gamma).  
\end{equation}
Applying $\sigma$ and $\sigma^2$ to  \eqref{gammas}
shows that \eqref{gammas} holds with $\gamma$ replaced by either 
of its conjugates, and that $\gamma^{\sigma^2} = (\gamma-1)/\gamma$.
We compute
$$
\textstyle
(x-\gamma)(x-\gamma^\sigma)(x-\gamma^{\sigma^2})
   = x^3 - (\frac{\gamma^3-3\gamma+1}{\gamma^2-\gamma})x^2 
   + (\frac{\gamma^3-3\gamma^2+1}{\gamma^2-\gamma})x + 1
$$
so the lemma holds with 
$
t := \frac{\gamma^3-3\gamma^2+1}{\gamma^2-\gamma} \in k.
$
\end{proof}

\begin{thm}
\label{genthm}
Suppose that $\chi : G_k \to \F_7^\times$ is a homomorphism, $\chi^3=1$, and  
$E$ is an elliptic curve over $k$. Then $E$ has a $k$-rational subgroup 
of order $7$ on which $G_k$ acts via $\chi$ if and only if there is a $v \in k$ such 
that $E$ is isomorphic over $k$ to the elliptic curve 
$$
A_v : y^2 + a_1(v)xy + a_3(v)y = x^3 + a_2(v)x^2 + a_4(v)x + a_6(v)
$$
over $k$ defined as follows.  
If $\chi = 1$, let
$$
a_1(v) = -v^2+v+1, \quad a_2(v) = a_3(v) = -v^3+v^2, \quad a_4(v) = a_6(v) = 0.
$$
If $\chi \ne 1$, then let $K$ be the cubic extension of $k$ cut out by 
$\chi$, let $\sigma \in \Gal(K/k)$ be the element with $\chi(\sigma) = 4$, 
fix $t \in k$ satisfying Lemma \ref{lcwlem} for $K$ and $\sigma$, 
and let 
\begin{align*}
c &= t^2+3t+9, \quad f(v) = v^3 -(t+3)v^2 + tv + 1, \\
a_1(v) &= c(v^2-v+1), \\
a_2(v) &= cf(v)t(2v - 1), \\
a_3(v) &= cf(v)[(t^3 - 1)v^3 + (t^3 - 1)v + t^2 - t + 1], \\
a_4(v) &= c^2f(v)[(-3t^2 - 5t - 2)v^5 + (2t^3 + 8t^2 + 8t - 7)v^4 \\
   &\hskip15pt - (3t^3 + 6t^2 + 5t - 20)v^3 + (2t^3 - t - 23)v^2 + 2(t^2 + 2t + 7)v - t - 1], \\
a_6(v) &= c^2f(v)^2[(2t^5 + 9t^4 + 23t^3 + 35t^2 + 24t + 11)v^6 \\
   &\hskip15pt + (-t^6 - 6t^5 - 23t^4 - 38t^3 - 33t^2 + 36t)v^5 \\
   &\hskip15pt + (t^6 + 6t^5 + 18t^4 - 6t^3 - 60t^2 - 180t + 13)v^4 \\
   &\hskip15pt + (-t^6 - 2t^5 + 46t^3 + 84t^2 + 142t - 139)v^3 \\
   &\hskip15pt + (-t^5 - 5t^4 - 27t^3 - 15t^2 + 9t + 182)v^2 \\
   &\hskip15pt + (2t^4 + 3t^3 - 10t^2 - 32t - 67)v + 2t^3 + 5t^2 + 11t + 11].
\end{align*}
\end{thm}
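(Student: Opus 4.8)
The plan is to reduce the assertion to the already-established case $\chi = 1$ (which is Theorem~\ref{x1of7thm}) by a Galois-descent/twisting argument, and then to verify that the explicit Weierstrass model $A_v$ written above is the right twisted surface. First I would treat the trivial case: if $\chi = 1$ then a $k$-rational subgroup $\Psi$ of order $7$ on which $G_k$ acts trivially is the same as a point of order $7$ in $E(k)$ (one of the two generators of $\Psi$), so Theorem~\ref{x1of7thm}(ii) gives a unique $v \in k$ with $(E, \text{generator}) \cong (E_v, (0,0))$ over $k$; one checks directly that the $A_v$ in the statement is literally $E_v$ rewritten, so this half is immediate. For the converse in this case, $(0,0) \in A_v(k)$ has order $7$ by Theorem~\ref{x1of7thm}(i), and the subgroup it generates is $k$-rational with trivial action.

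Now suppose $\chi$ has order $3$, cutting out the cyclic cubic field $K/k$; fix $\sigma \in \Gal(K/k)$ with $\chi(\sigma) = 4$ and $t \in k$ as in Lemma~\ref{lcwlem}. The key geometric input is that over $K$ the pair $(E, \Psi)$ acquires a canonical generator: since $\chi$ becomes trivial on $G_K$, by Theorem~\ref{x1of7thm}(ii) there is a unique $w \in K$ with $(E_K, P) \cong (E_w, (0,0))$ over $K$, for $P$ a chosen generator of $\Psi$. The point is to understand how $w$ transforms under $\sigma$. Because $\sigma$ sends $P$ to $\chi(\sigma)P = 4P = 2\cdot(2P)$, and $2\cdot(0,0)$ on $E_w$ corresponds under Lemma~\ref{autlem} to the canonical generator of $E_{\eta(w)}$, uniqueness of the model forces $w^\sigma = \eta(w)$. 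So the descent datum on $\{E_w\}_{w \in K}$ is governed precisely by the linear fractional transformation $\eta$, which is exactly the action in Lemma~\ref{lcwlem}(ii). Hence $w = \gamma$ (a root of $f(x) = x^3 - (t+3)x^2 + tx + 1$), i.e. $w$ ranges over such Galois-stable triples, and the family of all such $(E, \Psi)$ is the twist of the elliptic surface $\E_1 \to X_1(7)$ by the $\sigma$-semilinear gluing $E_w \rightsquigarrow E_{\eta(w)}$. Concretely, one parametrizes $k$-rational points of this twisted surface: a $v \in k$ determines the root $\gamma$ of $f$ with $v$ playing the role of the free parameter, the twisting $1$-cocycle is built from the isomorphism of Lemma~\ref{autlem}, and the $A_v$ displayed in the theorem is the resulting Weierstrass model over $k$ after clearing denominators (this is where the factor $c = t^2+3t+9$ and the polynomial $f(v)$ enter). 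The converse direction — that every $A_v$ has a $k$-rational $7$-subgroup with character $\chi$ — follows because over $K$ the model $A_v$ becomes isomorphic to some $E_w$ with $w^\sigma = \eta(w)$, so the subgroup generated by the pullback of $(0,0)$ is $G_k$-stable with the prescribed action.

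The main obstacle is the explicit descent computation: producing the cocycle from Lemma~\ref{autlem}, twisting the universal curve $E_u$ accordingly, and simplifying to the clean polynomial expressions for $a_1(v),\dots,a_6(v)$ over $k$ (as opposed to over $K$). Conceptually the argument is forced — it is Galois descent for a $\mathbf{P}^1$-bundle of curves whose gluing is a genus-zero automorphism — but the bookkeeping of denominators and the verification that the twisted model is exactly the stated one is the substantive calculation, best carried out and checked with a computer algebra system. A secondary point requiring care is the sign/normalization convention: choosing $\sigma$ with $\chi(\sigma) = 4$ rather than $\chi(\sigma) = 2$ matters because $4 = 2^2$ is a square in $\F_7^\times$, which is what makes the $\eta$-action (doubling of the point, i.e. squaring on $\Aut(\Psi)$) match Lemma~\ref{lcwlem}; one must check the chosen convention is consistent throughout.
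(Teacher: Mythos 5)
Your proposal is correct and follows essentially the same route as the paper: reduce to Theorem~\ref{x1of7thm} over the cubic field $K$, show via Lemma~\ref{autlem} that the $X_1(7)$-parameter $u$ satisfies $u^\sigma = \eta(u)$, descend to a $k$-rational parameter $v$ via the fractional linear transformation built from the root $\gamma$ of $f$, and verify the explicit Weierstrass model by computation. The paper's proof makes the descent concrete by introducing $\delta(z) = (-z+\gamma)/((\gamma-1)z+1)$ with $v = \delta^{-1}(u)$ and exhibits the isomorphism $(E_u,(0,0)) \cong (A_v,P_v)$ explicitly; your phrase ``$w=\gamma$'' is a slip (you clearly mean that $w$ transforms by the same law $w^\sigma=\eta(w)$ that $\gamma$ does), but the structure of the argument is the one the paper uses.
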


\begin{proof}
If $\chi = 1$, then $A_v$ is the curve $E_v$ of \eqref{univ7}, and the 
theorem follows from Theorem \ref{x1of7thm}.

Suppose now that $\chi \ne 1$.
Let $\gamma \in K$ be a root of $f(x)$.  
Define
\begin{align*}
U_v &:= ((\gamma-1)v+1)^2 (2\gamma^2 - (2t + 5)\gamma + t - 1)^2, \\
R_v &:= c f(v) [(\gamma - t - 3) \gamma v - \gamma^2 + (t + 2)\gamma + 1], \\
S_v &:= ((t + 3)\gamma^2 - (t^2 + 5t + 9)\gamma - 3)v^2 \\
   &\hskip30pt - (2t\gamma^2 - 2(t^2 + 3t + 3)\gamma + (t^2 + t + 3))v - 3\gamma^2 + (2t + 6)\gamma - t, \\
T_v &:= c f(v) [(2t^2 + 6t + 5)v^3 - (t^2+3t+9)v^2 - 13v + 2t + 4].
\end{align*}
If $v \in k$ and $A_v$ is 
nonsingular, then we compute that 
$
P_v := (R_v,T_v)
$
is a point of order $7$ in $A_v(K)$, and using Lemma \eqref{lcwlem}(ii) 
we compute that $P_v^\sigma = 4P_v = \chi(\sigma)P_v$.  Thus $P_v$ generates 
a $k$-rational subgroup of order $7$ on $A_v$, on which $G_k$ 
acts via $\chi$.  If $E$ is isomorphic over $k$ to $A_v$, then 
$E$ also has such a subgroup.

Conversely, suppose $E$ is an elliptic curve over $k$ with a $k$-rational 
subgroup of order $7$ on which $G_k$ acts via $\chi$.  Let $P \in E(K)$ 
be a generator of that subgroup 
(so $P^\sigma = \chi(\sigma)P=4P$).  
By Theorem \ref{x1of7thm}(ii) applied with $K$ in place of $k$, 
$(E,P)$ corresponds to a $K$-rational point 
of $X_1(7)$, i.e., there are a $u \in K$ and an isomorphism $\varphi : E \isom E_u$ 
defined over $K$ such that $\varphi(P) = (0,0) \in E_u[7]$.

Let $\delta$ be the linear fractional transformation 
\begin{equation}
\label{deltadef}
\delta(z) = \frac{-z+\gamma}{(\gamma-1)z+1}
\end{equation}
and let $v = \delta^{-1}(u) \in K$.  
We compute that the map $\lambda$ defined by
$$
\lambda(x,y) := (U_v^2 x + R_v, U_v^3 y + U_v^2 S x + T_v)
$$
is an isomorphism over $K$ from $(E_u,(0,0))$ to $(A_v,P_v)$.  
(Since $\delta(v) = u$, by \eqref{deltadef} we have $(\gamma-1)v+1 \ne 0$;
since also $[k(\gamma):k] = 3$, we have $U_v \ne 0$.)  
Therefore $\lambda \circ \varphi$ is an isomorphism from $(E,P)$ to $(A_v,P_v)$.   
If we show that $v \in k$, then Lemma \ref{extralem}(ii) will imply that 
$(E,P)$ and $(A_v,P_v)$ are isomorphic over $k$.

Suppose $\sigma \in G_k$.  Then $\varphi^\sigma$ is an isomorphism from 
$E$ to $E_{u^\sigma}$, and
$$
\varphi^\sigma(P) = \varphi^\sigma(2P^\sigma) = 2\varphi^\sigma(P^\sigma) 
   = 2\varphi(P)^\sigma = 2(0,0)^\sigma = 2(0,0).
$$
Thus we have isomorphisms
$$
(E_u,(0,0)) \map{\;\varphi^{-1}\,} (E,P) \map{\;\varphi^\sigma\,} (E_{u^\sigma},2(0,0)) 
   \map{\;\sim\;} (E_{\eta^{-1}(u^\sigma)},(0,0))
$$
where $\eta$ is defined by \eqref{etadef} and 
the final isomorphism comes from Lemma \ref{autlem}.  Thus by the uniqueness of $u$ in 
Theorem \ref{x1of7thm}(ii) (applied with $K$ in place of $k$) we see that 
$u = \eta^{-1}(u^\sigma)$, so $u^\sigma = \eta(u)$.

Using the definition of $\delta$ and Lemma \ref{lcwlem}(ii), 
it is easy to  check that $\delta^\sigma = \eta\delta$.  
Hence we have 
$$
v^\sigma = \delta^{-1}(u)^\sigma 
   = (\delta^\sigma)^{-1}(u^\sigma) = \delta^{-1} \eta^{-1} (\eta(u)) = \delta^{-1}(u) = v.
$$
Therefore $v \in k$, so $A_v$ is defined over $k$, $G_k$ acts on both $P$ and $P_v$ 
by multiplication by $\chi$, and so the isomorphism $\lambda \circ \varphi : (E,P) \isom (A_v,P_v)$ 
is defined over $k$ by Lemma \ref{extralem}(ii).
\end{proof}

\begin{rem}
\label{gendisc}
Suppose $\chi \ne 1$ in Theorem \ref{genthm}.  With notation as in 
Theorem \ref{genthm}, the discriminant 
of $A_v$ is given by
\begin{equation}
\label{discAveqn}
\Delta(A_v) = c^8 f(v)^7 [(t-5)v^3 + (5t+24)v^2 - (8t+9)v + t-5]. 
\end{equation}
\end{rem}

\begin{rem}
\label{discfrem}
With notation as in Theorem \ref{genthm} with $\chi \ne 1$,
the cubic Galois extension $K$
of $k$ is the splitting field over $k$ of the 
polynomial $f(x) \in k[x]$, by Lemma \ref{lcwlem}(i).  
Thus, $f(x)$ is separable and irreducible over $k$.
One can compute that $c^2$ is the discriminant of 
$f$, so $c \ne 0$.  It then follows from \eqref{discAveqn} that $\Delta(A_v) = 0$ for 
at most six values of $v \in \ksep$.
\end{rem}

\begin{defn}
\label{twistdef}
If $E$ is an elliptic curve over $k$ and $\epsilon : G_k \to \{\pm 1\} \subseteq \Aut(E)$ 
is a homomorphism, then 
the (quadratic) {\em twist} of $E$ by $\epsilon$ is an elliptic curve $E^{(\epsilon)}$ over $k$ such that
there is an isomorphism $\lambda : E^{(\epsilon)} \to E$ over $\ksep$
with $\lambda^\sigma\circ \lambda^{-1}=\epsilon(\sigma)$ for all $\sigma\in G_k$.
\end{defn}

If  $\ch(k) \ne 2$, $E$ is an elliptic curve over $k$ defined by an equation of  the form
$y^2=F(x)$, and $k(\sqrt{d})$ is the 
field cut out by such a character $\epsilon$, then  $E^{(\epsilon)}$  
is isomorphic over $k$ to the curve defined by  $dy^2=F(x)$, i.e., the quadratic twist of $E$ by $d$.

\begin{thm}
\label{moregenthm}
Suppose that $\psi : G_k \to \F_7^\times$ is a homomorphism.  
If $E$ is an elliptic curve over $k$, then $E$ has a $k$-rational subgroup 
of order $7$ on which $G_k$ acts via $\psi$ if and only if there is a $v \in k$ such 
that $E$ is isomorphic over $k$ to the 
twist of $A_v$ by $\psi^3$, where $A_v$ is as in Theorem \ref{genthm} 
for the character $\chi=\psi^4$.

In particular, if $\ch(k) \ne 2$ and $k(\sqrt{d})$ is the 
field cut out by $\psi^3$, then the twist of $A_v$ by $\psi^3$ 
is 
$$
A_v^{(d)} : y^2 = x^3 + d b_2(v) x^2 + 8 d^2 b_4(v) x + 16 d^3 b_6(v),
$$
where $b_2 = a_1^2 + 4a_2$, $b_4 = 2a_4 + a_1 a_3$, $b_6 = a_3^2 + 4a_6$  
are the usual invariants of the curve $A_v$.
\end{thm}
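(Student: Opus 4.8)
The plan is to deduce the theorem from Theorem~\ref{genthm} by splitting $\psi$ into its ``cubic part'' $\psi^4$ and ``quadratic part'' $\psi^3$. Since $\F_7^\times$ is cyclic of order $6$ we have $\psi^6=1$, so $\psi=\psi^7=\psi^4\cdot\psi^3$, where $\chi:=\psi^4$ has order dividing $3$ and $\psi^3$ has order dividing $2$; as $\psi^3$ takes values in $\{\pm1\}$, it defines (via $\{\pm1\}\subseteq\Aut(E)$) a quadratic twist $E\mapsto E^{(\psi^3)}$ in the sense of Definition~\ref{twistdef}. First I would isolate two elementary facts about quadratic twists. (a) Twisting by a character of order dividing $2$ is involutive up to $k$-isomorphism: if $\lambda\colon E^{(\epsilon)}\isom E$ satisfies $\lambda^\sigma=\epsilon(\sigma)\circ\lambda$, then $\lambda^{-1}$ exhibits $E$ as the twist of $E^{(\epsilon)}$ by $\epsilon^{-1}=\epsilon$. (b) If $E$ carries a $G_k$-stable subgroup $\Psi\subseteq E(\ksep)$ of order $7$ on which $G_k$ acts by a character $\theta$, then $\lambda^{-1}(\Psi)$ is a $G_k$-stable subgroup of $E^{(\epsilon)}(\ksep)$ of order $7$ on which $G_k$ acts by $\theta\epsilon$. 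Fact (b) is a short cocycle computation: for $P\in\Psi$, setting $P'=\lambda^{-1}(P)$ and using that $\lambda^{-1}$ is a group homomorphism together with $(\lambda^\sigma)^{-1}=\lambda^{-1}\circ\epsilon(\sigma)^{-1}$ and $\epsilon(\sigma)^{-1}=\epsilon(\sigma)$, one gets $(P')^\sigma=(\lambda^\sigma)^{-1}(P^\sigma)=(\lambda^\sigma)^{-1}(\theta(\sigma)P)=\theta(\sigma)\epsilon(\sigma)P'$; applying the same to $\lambda$ itself gives the converse direction of (b).

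Granting (a) and (b), the main assertion follows. Take $\epsilon=\psi^3$ in (b), so that $\theta\epsilon=\psi\cdot\psi^3=\psi^4=\chi$: then $E$ has a $k$-rational subgroup of order $7$ on which $G_k$ acts via $\psi$ if and only if $E^{(\psi^3)}$ has one on which $G_k$ acts via $\chi$. Since $\chi^3=1$, Theorem~\ref{genthm} says the latter holds if and only if $E^{(\psi^3)}$ is isomorphic over $k$ to $A_v$ for some $v\in k$; twisting both sides by $\psi^3$ and invoking the involutivity (a), this is equivalent to $E$ being isomorphic over $k$ to the twist of $A_v$ by $\psi^3$ for some $v\in k$. (For the ``if'' direction one can also argue directly: $A_v$ has a subgroup of order $7$ with character $\chi$, so by (b) the twist $(A_v)^{(\psi^3)}$ has one with character $\chi\psi^3=\psi$.) This proves the first assertion.

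For the explicit formula, assume $\ch(k)\neq2$ and let $k(\sqrt d)$ be the field cut out by $\psi^3$. Standard manipulations (completing the square in the Weierstrass equation for $A_v$ and rescaling) put $A_v$ into the form $y^2=x^3+b_2(v)x^2+8b_4(v)x+16b_6(v)$ over $k$, with $b_2=a_1^2+4a_2$, $b_4=2a_4+a_1a_3$, $b_6=a_3^2+4a_6$. Applying the description of quadratic twists recorded right after Definition~\ref{twistdef}, namely that the twist of $y^2=F(x)$ by $d$ is $dy^2=F(x)$, and putting this back into Weierstrass form via $(x,y)\mapsto(x/d,y/d^2)$ (which multiplies the coefficient of $x^{3-i}$ by $d^i$), yields precisely $A_v^{(d)}\colon y^2=x^3+db_2(v)x^2+8d^2b_4(v)x+16d^3b_6(v)$.

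The computations here are all routine; the only place that calls for care is fact (b) and the way it is combined with Theorem~\ref{genthm}. One must get the character bookkeeping exactly right --- that quadratic twisting multiplies the character of the order-$7$ subgroup by precisely $\psi^3$ (the potential discrepancy between $\psi^3$ and its inverse being harmless only because the character has order dividing $2$) --- and then use the involutivity (a) so that the ``if'' and ``only if'' halves of the biconditional reduce cleanly to the two halves of Theorem~\ref{genthm}.
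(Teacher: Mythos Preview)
Your proof is correct and follows essentially the same approach as the paper's own proof: both reduce to Theorem~\ref{genthm} by observing that quadratic twisting by $\psi^3$ converts a subgroup with character $\psi$ into one with character $\psi^4$, and then invoke involutivity of the twist. Your version simply spells out in more detail the two facts (a) involutivity and (b) the effect of twisting on the character, and the explicit completing-the-square for the $A_v^{(d)}$ formula, which the paper leaves as one-line observations.
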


\begin{proof}
Since $\psi^6=1$, we have $(\psi^4)^3=1$, so we can apply Theorem  \ref{genthm},
and we can also  twist by $\psi^3$  as in Definition \ref{twistdef}.
Let $\lambda : E^{(\psi^3)} \isom E$ be as in Definition \ref{twistdef}.  
For $P \in E(\ksep)$ and $\sigma \in G_k$,
$P^\sigma = \psi(\sigma)P$ if and only if   
$\lambda^{-1}(P)^\sigma = \psi^4(\sigma)\lambda^{-1}(P)$.
Thus by  Theorem \ref{genthm},  
\begin{align*}
&\text{$E$ has a $k$-rational subgroup of order $7$ on which $G_k$ acts via $\psi$} \\
   &\quad\Leftrightarrow 
   \text{$E^{(\psi^3)}$ has a $k$-rational subgroup of order $7$  on which $G_k$ acts via $\psi^4$} \\
   &\quad\Leftrightarrow 
   \text{$E^{(\psi^3)}$ is isomorphic over $k$ to $A_v$ for some $v \in k$}\\
   &\quad\Leftrightarrow 
   \text{$E$ is isomorphic over $k$ to $A_v^{(\psi^3)}$ 
      for some $v \in k$}.
\end{align*}
If $\ch(k) \ne 2$, then $A_v^{(d)}$ is a Weierstrass
model for $A_v^{(\psi^3)}$. 
\end{proof}

\begin{lem}
\label{etaautlem}
Let $A_v^{(d)}$ be as in Theorem \ref{moregenthm}, and let $\eta$ be as defined 
by \eqref{etadef}.  Then for every $v$, we have $A_{\eta(v)}^{(d)} \cong A_v^{(d)}$ over $k(v)$.
\end{lem}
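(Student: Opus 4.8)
The plan is to avoid manipulating the explicit Weierstrass equations (which, with the auxiliary parameter $t$ present, would be cumbersome) and instead to identify $v\mapsto\eta(v)$ with the coordinate form, after twisting, of the order-$3$ ``diamond'' operator $(E,P)\mapsto(E,2P)$ on $X_1(7)$. Since the quadratic twist by $\psi^3$ is functorial for isomorphisms defined over $k(v)$, it suffices to produce an isomorphism $A_{\eta(v)}\isom A_v$ over $k(v)$, where $A_v$ is the curve of Theorem~\ref{genthm} for $\chi=\psi^4$; twisting both sides by $\psi^3$ then gives the lemma (in the explicit Weierstrass form of Theorem~\ref{moregenthm} when $\ch(k)\neq2$). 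If $\ch(k)=2$ the twist is trivial, so this reduction is already an equality. Throughout, $v$ is taken so that $A_v$ and $A_{\eta(v)}$ are nonsingular, which excludes only finitely many $v$ by Remark~\ref{discfrem}, and we work over $k(v)$ with cubic extension $K(v):=K\cdot k(v)$.

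First I would record the conjugation identity $\delta\eta\delta^{-1}=\eta^{-1}$ in $\PGL2$, where $\delta$ is the linear fractional transformation of~\eqref{deltadef} from the proof of Theorem~\ref{genthm}. This is a single $2\times2$ matrix computation: representing $\delta$ and $\eta$ by $\bigl(\begin{smallmatrix}-1&\gamma\\ \gamma-1&1\end{smallmatrix}\bigr)$ and $\bigl(\begin{smallmatrix}0&1\\ -1&1\end{smallmatrix}\bigr)$, one checks that $\bigl(\begin{smallmatrix}-1&\gamma\\ \gamma-1&1\end{smallmatrix}\bigr)\bigl(\begin{smallmatrix}0&1\\ -1&1\end{smallmatrix}\bigr)\bigl(\begin{smallmatrix}-1&\gamma\\ \gamma-1&1\end{smallmatrix}\bigr)^{-1}$ is a scalar multiple of $\bigl(\begin{smallmatrix}-1&1\\ -1&0\end{smallmatrix}\bigr)$, which represents $\eta^{-1}$. (Equivalently, $\delta$ interchanges the two fixed points of $\eta$.) Hence, writing $u:=\delta(v)$, we get $\delta(\eta(v))=\eta^{-1}(u)$.

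Next I would assemble the isomorphism over $K(v)$. From the proof of Theorem~\ref{genthm} there is an isomorphism $\lambda_v:(E_u,(0,0))\isom(A_v,P_v)$ with $u=\delta(v)$, and likewise $\lambda_{\eta(v)}:(E_{\eta^{-1}(u)},(0,0))\isom(A_{\eta(v)},P_{\eta(v)})$ since $\delta(\eta(v))=\eta^{-1}(u)$. By Lemma~\ref{autlem} applied with parameter $\eta^{-1}(u)$ there is an isomorphism $\nu:(E_u,2\cdot(0,0))\isom(E_{\eta^{-1}(u)},(0,0))$. The composite $\Lambda:=\lambda_v\circ\nu^{-1}\circ\lambda_{\eta(v)}^{-1}$ is then an isomorphism $A_{\eta(v)}\isom A_v$ over $K(v)$ carrying $P_{\eta(v)}$ to $2P_v$. (When $\chi=1$ one has $A_v=E_v$ and $\delta$ is not defined, but then Lemma~\ref{autlem} itself provides an isomorphism $(E_{\eta(v)},2\cdot(0,0))\isom(E_v,(0,0))$ over $k(v)$, carrying a generator of the marked subgroup to a generator.) Now $\langle2P_v\rangle=\langle P_v\rangle$ is a $k$-rational subgroup of order $7$ of $A_v$ on which $G_k$ acts via $\chi$, and $\langle P_{\eta(v)}\rangle$ is such a subgroup of $A_{\eta(v)}$; thus the two $G_{k(v)}$-stable order-$7$ subgroups matched by $\Lambda$ carry the same character, namely the restriction of $\chi$. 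By Lemma~\ref{extralem}(ii), applied over $k(v)$, $\Lambda$ descends to $k(v)$, which completes the argument.

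There is no serious obstacle here: the essential point is to recognize the diamond-operator structure rather than attempt a brute-force symbolic verification. The only real computation is the harmless identity $\delta\eta\delta^{-1}=\eta^{-1}$, and the only genuine subtlety is the descent of $\Lambda$ from $K(v)$ to $k(v)$, which Lemma~\ref{extralem}(ii) dispatches once one notes that the two order-$7$ subgroups in play have equal character. (One can even bypass the conjugation computation: $\delta\eta\delta^{-1}$ has order $3$, and since $\delta$ preserves the fixed-point set of $\eta$ it lies in $\langle\eta\rangle$, hence equals $\eta$ or $\eta^{-1}$ — and either value suffices, after possibly replacing $v$ by $\eta^{\pm1}(v)$.)
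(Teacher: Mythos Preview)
Your proof is correct and follows essentially the same route as the paper's: reduce to $d=1$, use the isomorphisms $\lambda_v:(E_{\delta(v)},(0,0))\isom(A_v,P_v)$ from the proof of Theorem~\ref{genthm} together with Lemma~\ref{autlem}, compute the conjugation identity $\delta\eta\delta^{-1}=\eta^{-1}$ (equivalently, the paper's $\delta^{-1}\eta\delta=\eta^2=\eta^{-1}$), and then descend via Lemma~\ref{extralem}(ii). Your write-up is a bit more explicit about the reduction to $d=1$ and the trivial case $\chi=1$, but the argument is the same.
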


\begin{proof}
This can be shown by exhibiting an explicit isomorphism.  We will give a slightly less computational 
way to deduce the lemma from Lemma \ref{autlem}.  We can easily reduce to the case $d = 1$.

Let $\delta$ be the linear fractional transformation defined by \eqref{deltadef} in the 
proof of Theorem \ref{genthm}.  The proof of Theorem \ref{genthm} showed that 
$(A_v,P_v) \cong (E_{\delta(v)},(0,0))$ for every $v$, so we have
$$
(A_v,P_v) \cong (E_{\delta(v)},(0,0)) \cong (E_{\eta\delta(v)},2\cdot(0,0)) 
   \cong (A_{\delta^{-1}\eta\delta(v)},2\cdot P_{\delta^{-1}\eta\delta(v)})
$$
where the middle isomorphism is from Lemma \ref{autlem}.  A simple calculation shows that 
$\delta^{-1}\eta\delta(v) = \eta^2 = \eta^{-1}$, and so $A_v \cong A_{\eta^{-1}(v)}$ over $k(v)$ 
by Lemma \ref{extralem}(ii).
\end{proof}

\begin{cor}
\label{existence}
Suppose that  $\psi : G_k \to  \F_7^\times$ is a homomorphism.
If $k \ne \F_2$,
then there exists an elliptic curve $E$ over $k$ with a
$k$-rational subgroup of order $7$ on which
$G_k$ acts via $\psi$.
If $k = \F_2$, then there exists an elliptic curve
$E$ over $\F_2$ with  an $\F_2$-rational subgroup 
of order $7$ on which $G_{\F_2}$ acts via $\psi$
if and only if  $\psi$ is $\omega^{-1}$ or $\omega^{-1}\epsilon$,
where  $\epsilon$ is the unique character 
of $G_{\F_2}$ of order $2$ 
and $\omega$ is the cyclotomic character.  
\end{cor}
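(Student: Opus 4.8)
The plan is to use Theorem~\ref{moregenthm} to turn the statement into a question about nonsingular members of the families $\{A_v\}$, and then to settle that with the discriminant formulas \eqref{Dform} and \eqref{discAveqn}. By Theorem~\ref{moregenthm}, an elliptic curve $E$ over $k$ with a $k$-rational subgroup of order $7$ on which $G_k$ acts via $\psi$ exists if and only if, for the cubic character $\chi = \psi^4$, the curve $A_v$ of Theorem~\ref{genthm} is nonsingular for some $v \in k$: one may then take $E = A_v^{(\psi^3)}$, and conversely any such $E$ is $k$-isomorphic to $A_v^{(\psi^3)}$, which forces $A_v$ to be nonsingular. In particular the answer depends only on $\chi = \psi^4$. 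For $\chi = 1$ we have $A_v = E_v$, and \eqref{Dform} shows $\Delta(E_v)$ has at most $5$ roots in $\ksep$; for $\chi \ne 1$, Remark~\ref{discfrem} gives at most $6$. So if $k$ has more than $6$ elements --- in particular if $k$ is infinite, or $k = \F_q$ with $q \ge 8$ (recall $\ch k \ne 7$) --- there is a $v \in k$ with $A_v$ nonsingular, and we are done. It remains to treat $\F_2, \F_3, \F_4, \F_5$, and for each the at most three characters $\chi$ with $\chi^3 = 1$.

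For $k \in \{\F_3,\F_4,\F_5\}$ the plan is to check directly that every cubic $\chi$ admits a nonsingular fiber, so that the statement holds for all $\psi$ over these fields. The case $\chi = 1$ is immediate from \eqref{Dform}. For $\chi \ne 1$ one fixes an explicit $t \in k$ satisfying Lemma~\ref{lcwlem} and substitutes a small value of $v$ into \eqref{discAveqn}. The useful structural point here is that $\F_q$ has a unique cubic extension, with two generators, and by Lemma~\ref{lcwlem} each generator is realized by an admissible $t$ (one for which $f(x) = x^3-(t+3)x^2+tx+1$ is separable and irreducible over $k$); hence each of the two nontrivial cubic characters arises from some admissible $t$, and it is enough to produce a nonsingular $A_v$ for each admissible $t$, of which there are only two or four over these fields. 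These are short finite computations, and in each case a suitable $v$ exists.

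The substantive case is $k = \F_2$, where only $v = 0$ and $v = 1$ are available and the conclusion genuinely depends on $\chi$. For $\chi = 1$, reducing \eqref{Dform} modulo $2$ shows $\Delta(E_v)$ vanishes at both $v = 0$ and $v = 1$, so no curve exists --- as it must, since $\#E(\F_2) \le 5 < 7$. For $\chi \ne 1$ there are exactly two admissible values, $t = 0$ and $t = 1$, corresponding to the two irreducible cubics $x^3+x^2+1$ and $x^3+x+1$ over $\F_2$, each cutting out $K = \F_8$. The delicate step is to decide which of the two nontrivial cubic characters belongs to each $t$: one uses the relation $\gamma^\sigma = \eta(\gamma)$ of Lemma~\ref{lcwlem} (with $\eta$ as in \eqref{etadef}), testing for a root $\gamma$ of each cubic whether $\gamma^2$ --- the image of $\gamma$ under Frobenius --- equals $\eta(\gamma)$. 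For $x^3+x^2+1$ it does, so $\sigma$ is the Frobenius and $\chi = \omega^{-1}$; for $x^3+x+1$ it does not, so $\sigma$ is the inverse Frobenius and $\chi = \omega$. A reduction of \eqref{discAveqn} modulo $2$ then shows that for $t = 0$ the fiber $A_0$ is nonsingular, so $\chi = \omega^{-1}$ is realized, while for $t = 1$ both $A_0$ and $A_1$ are singular, so $\chi = \omega$ is not. Thus over $\F_2$ exactly the cubic character $\omega^{-1}$ is realized, and neither $1$ nor $\omega$. Finally, since $\omega$ has order $3$, the fiber of $\psi \mapsto \psi^4$ over $\omega^{-1}$ is the coset $\{\omega^{-1},\omega^{-1}\epsilon\}$ of the order-$2$ subgroup $\{1,\epsilon\}$, which is exactly the list in the statement. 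I expect the only real obstacle to be this $\F_2$ analysis --- not a deep difficulty, but one requiring care in matching each admissible $t$ to its character via Lemma~\ref{lcwlem} and in carrying the quadratic twist $\psi^3$ along correctly.
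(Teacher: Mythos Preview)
Your proposal is correct and follows the same overall strategy as the paper: reduce via Theorem~\ref{moregenthm} to finding $v\in k$ with $\Delta(A_v)\ne 0$, then analyze the discriminant formulas \eqref{Dform} and \eqref{discAveqn}, with the $\F_2$ case handled exactly as you describe (the paper likewise identifies $t=0$ with $\psi^4=\omega^{-1}$ and $t=1$ with $\psi^4=\omega$). The one place the paper is more efficient is for $\chi\ne 1$: rather than your separate case checks for $\F_3,\F_4,\F_5$, it observes directly from \eqref{discAveqn} that, since $c\ne 0$ and $f(v)\ne 0$ for $v\in k$, the vanishing of $\Delta(A_v)$ for $v\in k$ is controlled entirely by the cubic $(t-5)v^3+(5t+24)v^2-(8t+9)v+t-5$; if $t\ne 5$ then $v=0$ already gives $\Delta(A_0)\ne 0$, while if $t=5$ this cubic collapses to $49v(v-1)$, so any $v\ne 0,1$ works and $\F_2$ is the only field left to examine. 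This uniform argument replaces your finite enumeration, but both approaches are valid.
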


\begin{proof}
Let $A_v$ be as defined in Theorem \ref{genthm}, for the 
(at most cubic) character $\psi^4$.
If $v \in k$ is not a zero of the discriminant $\Delta(A_v)$, then Theorem 
\ref{moregenthm} shows that the twist of $A_v$ by $\psi^3$ 
has the desired property.  
Suppose $k \neq \F_2$.
We need only show that there exists a $v \in k$ 
such that $\Delta(A_v) \ne 0$.

Suppose $\psi^4 = 1$. Then $A_v = E_v$,
so $\Delta(A_v) = \Delta(E_v)$  is given by \eqref{Dform}.
That polynomial has at most $5$ roots, and  
it is easy to check that it has only the 
roots $0$ and $1$ if $|k| \le 5$.  Hence, 
there are $v \in k$ with $\Delta(A_v) \ne 0$ if and only if $k \ne \F_2$.

Now suppose $\psi^4 \ne 1$. Then $\Delta(A_v)$ is the polynomial given 
in \eqref{discAveqn}.  By Remark \ref{discfrem}, $c \ne 0$, 
and $f(v) \ne 0$ for every $v \in k$.  Thus for $v \in k$,  
$\Delta(A_v) = 0$ if and only if  $(t-5)v^3 + (5t+24)v^2 - (8t+9)v + t-5 = 0$.  
If $t \ne 5$, then $\Delta(A_v) \ne 0$ when $v = 0$.  
If $t = 5$, then 
$\Delta(A_v) = 0$ only when $v = 0$ or $1$ (since $\ch(k) \ne 7$), 
so there are $v \in k$ with $\Delta(A_v) \ne 0$ if and only if $k \ne \F_2$.

When $k = \F_2$, the stated result follows from the above proof, the fact that
$\omega$ has order $3$, and the fact that $t=1$ when 
$\psi^4= \omega$ while  $t=0$ when $\psi^4= \omega^{-1}$.  
(Alternatively, it can also be deduced from the fact that no elliptic curve defined over 
$\F_2$ has a rational point of order $7$, which follows from the Weil bounds.)
Note that the remaining characters
are $1$, $\omega$, $\epsilon$, and $\omega\epsilon$;
for each of these characters there is no elliptic curve 
over $\F_2$ with an $\F_2$-rational subgroup of order $7$ on which $G_{\F_2}$ 
acts via that character. 
\end{proof}

\begin{rem}
Here is another interpretation of Theorem \ref{moregenthm}.  
Suppose $\Psi$ is a cyclic group of order $7$ with an action of $G_k$.  
Consider isomorphism classes (in the obvious sense) of pairs $(E,f)$ where $E$ 
is an elliptic curve and $f : \Psi \hookto E[7]$ is an injection.  We say that 
$(E,f)$ is $k$-rational if $(E^\sigma,f^\sigma)$ is isomorphic to $(E,f)$ for every $\sigma \in G_k$,   
and we let $X(\Psi)$ denote the moduli space of such isomorphism classes.  If $K$ is an extension 
of $k$ such that $G_K$ acts trivially on $\Psi$, and $P$ is a generator 
of $\Psi$, then the map 
$$
(E,f) \mapsto (E,f(P))
$$
induces an isomorphism from $X(\Psi)$ to $X_1(7)$ defined over $K$.  Thus $X(\Psi)$ is a 
twist of $X_1(7)$.  

Let $\psi : G_k \to \Aut(\Psi) \cong \F_7^\times$ be the character giving the action 
of $G_k$ on $\Psi$.  
Let $A_v$ be as defined in Theorem \ref{genthm} with $\chi = \psi^4$, 
and let $f_v : \Psi \to A_v[7]$ be the unique homomorphism with $f_v(P) = P_v$, 
where $P_v$ is as in the proof of Theorem \ref{genthm}.
Then Theorem \ref{moregenthm} says that the elliptic surface 
$A_v$ is the universal elliptic curve over $X(\Psi)$, in the following sense.
For every $v \in \ksep$ such that $\Delta(A_v) \ne 0$, the pair $(A_v,f_v)$ is 
$k(v)$-rational, and conversely for every pair ($E,f)$ with $E$ defined over $\ksep$, 
there is a unique $v \in \ksep$ such that $(E,f)$ is isomorphic to $(A_v,f_v)$.

If $a\in \F_7^\times/(\pm1)$ there is a natural automorphism of $X(\Psi)$ induced by 
$(E,f) \mapsto (E,af)$.
The proof of Lemma \ref{etaautlem} shows that the automorphism corresponding to 
$a = 2$ is $\eta$, in the sense that $(A_{\eta(v)},f_{\eta(v)}) \cong (A_v,2f_v)$.
\end{rem}

\section{The special case where  $k = \Q$ and  $\psi = \omega^5$}
\label{speccasesect}

As before, 
let $\omega : G_\Q \to \F_7^\times$ denote 
the cyclotomic character, and  
if $E$ is an elliptic curve defined over $\Q$, 
let $\Deltamin(E)$ denote the discriminant of 
a minimal model of $E$.

Define an elliptic curve
\begin{equation}
\label{defbv}
B_v : y^2 + xy = x^3 - x^2 + \alpha(v)x + \beta(v),
\end{equation}
where
\begin{align*}
\alpha(v) &:= \textstyle
   -\frac{35}{16}v^8 + \frac{63}{4}v^7 - \frac{833}{24}v^6 + \frac{49}{2}v^5 
   + \frac{245}{48}v^4 - \frac{49}{2}v^3 + \frac{343}{24}v^2 + \frac{7}{4}v - 2, \\
\beta(v) &:= \textstyle
   -\frac{49}{32}v^{12} + \frac{637}{48}v^{11} -\frac{ 1617}{32}v^{10} 
   + \frac{44149}{432}v^9 - \frac{16555}{192}v^8
- \frac{1477}{36}v^7 + \frac{1911}{16}v^6 \\
    & \textstyle \hskip 1in
   - \frac{8183}{144}v^5 - \frac{2009}{192}v^4 + \frac{7007}{432}v^3 
   - \frac{147}{16}v^2 + \frac{14}{3}v - 1.
\end{align*}
The discriminant of $B_v$ is 
\begin{equation}
\label{dbv}
\Delta(B_v) = -7^3(v^3 - 2v^2 - v + 1)(v^3 - v^2 - 2v + 1)^7.
\end{equation}

\begin{thm}
\label{omega5}
Let $B_v$ be as above. 
\begin{enumerate}
\item
Suppose $E$ is an elliptic curve over $\Q$.  Then $E$ has a rational subgroup of order $7$ 
on which $G_\Q$ acts via $\omega^5$ if and only if there is a $v \in \Q$ such that 
$E \cong B_v$ over $\Q$.
\item
If $p$ is a prime and $v \in \Q$ is integral at $p$, then 
the above model for $B_v$ is integral at $p$
and minimal at $p$.
\item
If $p$ is a prime and $v \in \Q$ is not integral at $p$, then 
$$
\ord_p(\Deltamin(B_v)) = \ord_p(\Delta(B_v)) - 24\, \ord_p(v).
$$
\end{enumerate}
\end{thm}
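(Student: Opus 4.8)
The plan is to obtain part (i) as a special case of Theorem \ref{moregenthm}, and then to prove (ii) and (iii) by analysing the discriminant $\Delta(B_v)$ of \eqref{dbv} directly, deducing (iii) from (ii) by means of the linear fractional transformation $\eta$ of \eqref{etadef}.

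For part (i), apply Theorem \ref{moregenthm} with $k=\Q$ and $\psi=\omega^5$. Since $\omega$ has order $6$, the character $\psi^4=\omega^{2}$ is the cubic character cutting out the real cyclotomic field $K=\Q(\zeta_7+\zeta_7^{-1})$, and $\psi^3=\omega^3$ is the quadratic character of $\Q(\sqrt{-7})$, so $d=-7$ in the notation of Theorem \ref{moregenthm}. One checks that $t=-2$ satisfies Lemma \ref{lcwlem} for $K$ and the generator $\sigma$ of $\Gal(K/\Q)$ with $\omega^2(\sigma)=4$: the polynomial $f(x)=x^3-x^2-2x+1$ (which is the polynomial of Theorem \ref{genthm} for $t=-2$) has the numbers $-(\zeta_7^{\,j}+\zeta_7^{-j})$ as its roots, hence splitting field $K$, and for $\gamma=-(\zeta_7+\zeta_7^{-1})$ and $\sigma(\zeta_7)=\zeta_7^{2}$ one gets $\gamma^\sigma=-(\zeta_7^{2}+\zeta_7^{-2})=1/(1-\gamma)=\eta(\gamma)$, using $1+\zeta_7+\cdots+\zeta_7^6=0$. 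With this choice $c=t^2+3t+9=7$, and Theorem \ref{moregenthm} produces the curve $A_v^{(-7)}$. It remains to verify by an explicit (if lengthy) change of coordinates that $A_v^{(-7)}$ is isomorphic over $\Q$ to $B_v$ for every $v$; comparing discriminants, $\Delta(A_v^{(-7)})=(-7)^6\Delta(A_v)=-7^{15}f(v)^7h(v)$ with $h(v):=v^3-2v^2-v+1$, which by \eqref{dbv} equals $7^{12}\Delta(B_v)$, so the connecting isomorphism has leading coefficient $7$. Part (i) then follows from Theorem \ref{moregenthm}, since $\{A_v^{(-7)}:v\in\Q\}=\{B_v:v\in\Q\}$ as sets of $\Q$-isomorphism classes.

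For part (ii), first note that for $p\ge5$ the denominators occurring in $\alpha(v),\beta(v)$ are units at $p$, so $B_v$ is a $p$-integral model whenever $v\in\Z_p$. For $p=2,3$ one checks that $\alpha$ and $\beta$ are integer-valued polynomials — it suffices to verify $\alpha(0),\dots,\alpha(8)\in\Z$ and $\beta(0),\dots,\beta(12)\in\Z$ — so that $\alpha(\Z),\beta(\Z)\subseteq\Z$, and by density of $\Z$ in $\Z_p$ one gets $\alpha(\Z_p),\beta(\Z_p)\subseteq\Z_p$; hence $B_v$ is integral at every $p$ for $v\in\Z_p$. For minimality I would use the factorization
\[
\Delta(B_v)=-7^3\,h(v)\,f(v)^7,\qquad h(v)=v^3-2v^2-v+1,\quad f(v)=v^3-v^2-2v+1,
\]
together with $h=f-v(v-1)$, so that $\mathrm{Res}(f,h)=\pm1$ and, for every prime $p$ and $v\in\Z_p$, at most one of $f(v),h(v)$ is divisible by $p$. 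At $p=2,3$ the polynomials $f,h$ have no roots modulo $p$, so $\ord_p\Delta(B_v)=0$ and $B_v$ has good reduction, hence is minimal. At $p=7$: both $f$ and $h$ are irreducible cubics of discriminant $49$, with $f\equiv(v+2)^3$ and $h\equiv(v-3)^3\pmod7$, while $\ord_7 f(5)=\ord_7(91)=1$ and $\ord_7 h(3)=\ord_7(7)=1$; a Taylor expansion around $v=5$ and $v=3$ then shows $\ord_7 f(v)\le1$ and $\ord_7 h(v)\le1$ for all $v\in\Z_7$, whence $\ord_7\Delta(B_v)\in\{3,4,10\}$, and being $<12$ this forces $B_v$ to be minimal at $7$. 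Finally, for $p\ge5$ with $p\ne7$ and $v\in\Z_p$: if $p\nmid\Delta(B_v)$ there is nothing to prove; otherwise exactly one of $f(v),h(v)$ is divisible by $p$, so $v$ reduces modulo $p$ to a cusp of $X_1(7)$ — and the cusps remain distinct in characteristic $\ne7$, where the universal curve (unchanged by the twist by $-7$) has multiplicative reduction — so $p\nmid c_4(B_v)$, and an integral Weierstrass model with multiplicative reduction is automatically minimal. (Alternatively one can compute $c_4(B_v)=9-48\alpha(v)$ and check that $\mathrm{Res}_v(c_4(B_v),f(v))$ and $\mathrm{Res}_v(c_4(B_v),h(v))$ are, up to sign, products of powers of $2,3$ and $7$, which gives $p\nmid c_4(B_v)$ for the relevant primes.)

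For part (iii), the plan is to exploit the automorphism $\eta(v)=1/(1-v)$. By Lemma \ref{etaautlem} together with part (i), $B_{\eta(v)}\cong B_v$ over $\Q$ for every $v\in\Q$. Since $f$ and $h$ are monic cubics with constant term $1$ whose roots are permuted by $\eta$ (equivalently $f(1-x)=-h(x)$), a short computation gives $f(\eta(v))=-f(v)/(1-v)^3$ and $h(\eta(v))=-h(v)/(1-v)^3$, hence
\[
\Delta(B_{\eta(v)})=-7^3\,h(\eta(v))\,f(\eta(v))^7=\frac{\Delta(B_v)}{(1-v)^{24}}.
\]
Now if $v\in\Q$ is not integral at $p$, then $\ord_p(v)<0$, so $\ord_p(1-v)=\ord_p(v)<0$ and $\ord_p(\eta(v))=-\ord_p(1-v)>0$, i.e.\ $\eta(v)\in\Z_p$. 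By part (ii), $B_{\eta(v)}$ is minimal at $p$, so
\[
\ord_p(\Deltamin(B_v))=\ord_p(\Deltamin(B_{\eta(v)}))=\ord_p(\Delta(B_{\eta(v)}))=\ord_p(\Delta(B_v))-24\ord_p(1-v)=\ord_p(\Delta(B_v))-24\ord_p(v),
\]
which is exactly the assertion of (iii).

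The routine calculations (checking $t=-2$, identifying $B_v$ with $A_v^{(-7)}$, the integer-valuedness of $\alpha,\beta$, and the resultant and $\eta$-transformation identities) are mechanical; the genuine obstacle is the minimality claim in (ii). Its heart is the prime $p=7$, where one must rule out a non-minimal model by bounding $\ord_7 f(v)$ and $\ord_7 h(v)$ by $1$ on $\Z_7$ — a statement that depends on the precise fact that $7\,\|\,f(5)$ and $7\,\|\,h(3)$ (not merely on $\Delta(f)=\Delta(h)=49$) — and, at the remaining bad primes, on the fact that $B_v$ acquires only multiplicative reduction, so that $c_4(B_v)$ stays a $p$-adic unit there.
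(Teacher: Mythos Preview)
Your argument is correct. Parts (i) and (ii) proceed essentially as in the paper: the paper also specializes Theorem~\ref{moregenthm} with $t=-2$ (using $d=-1/7$ rather than your $d=-7$, which differs only by a square) and exhibits the explicit change of variables to $B_v$; for minimality the paper computes the single resultant $\mathrm{Res}(\Delta(B_v),c_4(B_v))=7^{98}$ in place of your separate treatment of $p=2,3$ and of $\mathrm{Res}(c_4,f)$, $\mathrm{Res}(c_4,h)$, and handles $p=7$ exactly as you do (via the bound $\ord_7 f(v),\ord_7 h(v)\le 1$, recorded there as Lemma~\ref{tablem}). Your geometric remark about cusps and multiplicative reduction is a little informal, but the resultant alternative you give is rigorous and coincides with the paper's method.

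Part (iii) is where you take a genuinely different route. The paper substitutes $w=1/v$, writes down an explicit second Weierstrass model $\hat B_v$ integral in $w$, and repeats the integrality and minimality verification for that model. You instead invoke Lemma~\ref{etaautlem} to get $B_{\eta(v)}\cong B_v$ over $\Q$, observe that $\ord_p(v)<0$ forces $\eta(v)\in\Z_p$, and feed $\eta(v)$ back into part~(ii). This is cleaner: it replaces a second round of model-checking by a one-line computation of $\Delta(B_{\eta(v)})=\Delta(B_v)/(1-v)^{24}$ together with $\ord_p(1-v)=\ord_p(v)$. The paper's approach has the minor advantage of producing an explicit integral minimal model at $p$ when $v\notin\Z_p$, which your argument does not directly supply; but for the stated assertion about $\ord_p(\Deltamin(B_v))$ your method is shorter and uses machinery already in place.
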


\begin{proof}
We will deduce part (i) from Theorem \ref{moregenthm}.
The cyclotomic character $\omega$ has order $6$.
Let $\psi = \omega^5$.  The cubic character $\psi^4 = \omega^2$ 
cuts out the unique cubic subfield $K := \Q(\bmu_7)^+$ of $\Q(\bmu_7)$.
The automorphism 
$\sigma\in\Gal(K/\Q)$ that sends $\zeta_7 + \zeta_7^{-1}$ to $\zeta_7^2 + \zeta_7^{-2}$ 
satisfies $\omega (\sigma)=2$, so $\psi(\sigma) = 2^5=4$.
Using the construction of $\gamma$ and $t$ in the proof of Lemma \ref{lcwlem},
we obtain  $t = -2$ and $\gamma = -(\zeta_7 + \zeta_7^{-1})$.

Further, the quadratic character $\psi^3 = \omega^3$ cuts out the 
unique quadratic subfield $\Q(\sqrt{-7})$ of $\Q(\bmu_7)$.
The map
$(x,y) \mapsto (x', y')$
where
$$
x' = \textstyle \frac{1}{4}x + \frac{3}{4}v^4 - \frac{5}{6}v^3 
   - \frac{15}{4}v^2 + \frac{23}{6}v - 1, \qquad y' = \textstyle \frac{1}{8}y- \frac{1}{2}x'
$$
is an isomorphism from the curve 
$A_v^{(d)}$ of Theorem \ref{moregenthm} with $t = -2$  and $d = -1/7$, 
to the curve $B_v$ of \eqref{defbv} 
(recall that $t$ is in the definition of $A_v$  in Theorem \ref{genthm}, 
which is used to define $A_v^{(d)}$ 
in Theorem \ref{moregenthm}).
Now (i) follows from  Theorem \ref{moregenthm}. 

The polynomials $\alpha(v)$ and $\beta(v)$ take $\Z$ to $\Z$, 
as can be seen, for example, by 
expressing them as integral linear combinations of binomial coefficient polynomials 
$\binom{v}{n}$.
It follows that if $v \in \Q$ is integral at $p$, then the model \eqref{defbv} 
is integral at $p$.  

Suppose first that $p \ne 7$.  
With $c_4$ the usual invariant (see \cite[\S{III.1}]{silverman}), we check that
the polynomial $c_4(B_v)$ is in $\Z[v]$.
The resultant of the polynomials $\Delta(B_v)$ and 
$c_4(B_v)$ is $7^{98}$, 
which is a unit at $p$.  Hence if $v$ is integral at $p$, 
then $\Delta(B_v)$ and $c_4(B_v)$ 
cannot both vanish mod $p$, so by 
Tate's algorithm (or \cite[Proposition III.1.4(ii)]{silverman}), the equation 
defining $B_v$ is minimal at $p$.

If $p=7$ and $v \in \Q$ is integral at $7$, then 
Lemma \ref{tablem}(c) below shows that $\ord_7(\Delta(B_v)) < 12$, 
so the equation defining $B_v$ is minimal at $7$.  This proves (ii).

Now suppose that $v$ is not integral at $p$.  Then $w:=1/v$ is integral at $p$, 
and via the change of variables 
$$
\textstyle
(x,y) \mapsto (w^4x-\frac{w^4-1}{4},w^6y+\frac{w^4(w^2-1)}{2}x+\frac{w^4-1}{8}),
$$
$B_v$ has a model 
\begin{multline*}
\textstyle
\hat{B}_v : y^2 + xy = x^3 - x^2 + (\tilde\alpha(w)+\frac{3}{16}(1-w^{8}))x \\
\textstyle
   + \tilde\beta(w)+\frac{w^{4}-1}{4}\tilde\alpha(w)-\frac{2w^{12}-3w^{8}+1}{64}
\end{multline*}
where $\tilde\alpha(z):=z^8\alpha(1/z)$, $\tilde\beta(z):=z^{12}\beta(1/z) \in \Q[z]$
with $\alpha$ and $\beta$ as in \eqref{defbv}. 
Again, one can check that the polynomials 
$$
\tilde\alpha(z)+3(1-z^{8})/16   \quad  \text{ and }  \quad 
\tilde\beta(z)+(z^{4}-1)\tilde\alpha(z)/4+(-2z^{12}+3z^{8}-1)/16 
$$
take $\Z$ to $\Z$.  Hence  $\hat{B}_v$ is integral at $p$.  
Exactly as for (ii) 
one can show that $\hat{B}_v$ is minimal at $p$, 
and hence  $\Deltamin(B_v) = \Delta(\hat{B}_v) = v^{-24}\Delta(B_v)$.  This proves (iii).
\end{proof}

\begin{rem}
\label{Bvreprmk}
Theorem \ref{omega5}(i) shows that for $v\in\Q$, the representation of $G_\Q$
acting on $B_v[7]$ is of the form $\bigl(\begin{smallmatrix} \omega^5 &\ast\\0&\omega^2 \end{smallmatrix}\bigr)$.
\end{rem}

\begin{cor}
\label{dmincor}
If $v \in \Q$ has denominator $d$, then 
$$
\Deltamin(B_v) = \Delta(B_v)d^{24} = -7^3 d^{24}(v^3 - 2v^2 - v + 1)(v^3 - v^2 - 2v + 1)^7.
$$
\end{cor}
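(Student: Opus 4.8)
The plan is to verify the identity $\Deltamin(B_v) = \Delta(B_v)d^{24}$ one prime at a time, by reading off $\ord_p(\Deltamin(B_v))$ from Theorem \ref{omega5} and matching it against $\ord_p$ of the proposed right-hand side. Write $v = a/d$ in lowest terms, with $d \ge 1$, and set $N := \Delta(B_v)d^{24}$; by \eqref{dbv} we have $N = -7^3 d^{24}(v^3-2v^2-v+1)(v^3-v^2-2v+1)^7$, which is an integer. Since $\Delta(B_v)$ is the discriminant of the model \eqref{defbv} of $B_v$, the claim is exactly that the global model \eqref{defbv}, suitably adjusted, accounts for the minimal discriminant, and the bookkeeping below should make this transparent.

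First I would treat the primes $p \nmid d$. For such $p$ the value $v$ is integral at $p$, so Theorem \ref{omega5}(ii) says the model \eqref{defbv} is already minimal at $p$, and therefore $\ord_p(\Deltamin(B_v)) = \ord_p(\Delta(B_v)) = \ord_p(N)$, the last equality because $\ord_p(d^{24}) = 0$. Next I would treat the primes $p \mid d$. For such $p$ the value $v$ is not integral at $p$, and moreover $\ord_p(v) = -\ord_p(d)$ since $\gcd(a,d) = 1$. Then Theorem \ref{omega5}(iii) gives
$$
\ord_p(\Deltamin(B_v)) = \ord_p(\Delta(B_v)) - 24\,\ord_p(v) = \ord_p(\Delta(B_v)) + 24\,\ord_p(d) = \ord_p(N).
$$

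Having shown $\ord_p(\Deltamin(B_v)) = \ord_p(N)$ for every prime $p$, I conclude $\Deltamin(B_v) = \pm N$. To settle the sign, I would invoke the standard fact that any two Weierstrass models over $\Q$ of a given elliptic curve have discriminants differing by the twelfth power of a nonzero rational, so $\Delta(B_v)$ and $\Deltamin(B_v)$ have the same sign; since $d^{24} > 0$ as well, $N$ and $\Deltamin(B_v)$ have the same sign, forcing $\Deltamin(B_v) = N$. Substituting the formula \eqref{dbv} for $\Delta(B_v)$ then yields the displayed expression. I do not expect a genuine obstacle here; the only points that need a moment's care are the elementary valuation identity $\ord_p(v) = -\ord_p(d)$ for $p \mid d$ (which uses that $d$ is the \emph{exact} denominator of $v$) and the sign normalization at the end.
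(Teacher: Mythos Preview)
Your argument is correct and is exactly the approach the paper takes: the paper's proof is the single sentence ``This follows directly from Theorem \ref{omega5}(ii,iii),'' and you have simply unpacked that sentence prime by prime, together with the (standard) sign observation that two Weierstrass discriminants of the same curve differ by a twelfth power.
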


\begin{proof}
This follows directly from Theorem \ref{omega5}(ii,iii).
\end{proof}

\begin{lem}
\label{tablem} 
Let $f_1(v)=v^3-2v^2-v+1$ and $f_2(v)=v^3-v^2-2v+1$.  For $v \in \Q$ and
$B_v$ as above,
we have the following table:
$$
\arraycolsep=2pt
\begin{array}{l|l||c|c|c|c|}
\cline{2-6}
&& \multicolumn{3}{c|}{\text{$v$ integral at $7$}} & \text{$v$ not integral at $7$}\\
\cline{3-5}
&&v \equiv 3 \hskip-7pt \pmod{7} &  v \equiv 5 \hskip-7pt \pmod{7} & \text{otherwise} &   \\
\cline{2-6} 
(\mathrm{a}) & \ord_7(f_1(v)) & 1 & 0 & 0 & 3\,\ord_7(v)  \\
\cline{2-6}
 (\mathrm{b}) & \ord_7(f_2(v)) & 0 & 1 & 0 &  3\,\ord_7(v)  \\
\cline{2-6}
 (\mathrm{c}) & \ord_7(\Delta(B_{v})) & 4 & 10 & 3 & 3+ 24\,\ord_7(v)  \\
\cline{2-6}
 (\mathrm{d}) & \ord_7(\Deltamin(B_{v})) & 4 & 10 & 3 & 3  \\
\cline{2-6}
 (\mathrm{e}) & \ord_7(j(B_v)) & \ge 2 & \ge 5 &  0 & 0 \\
\cline{2-6}
\end{array}
$$
\end{lem}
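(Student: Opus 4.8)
\emph{Proof proposal.} The plan is to fill in the table one row at a time, from top to bottom, using only the discriminant formula \eqref{dbv} and the elementary Weierstrass invariants of $B_v$; I would prove rows (a)--(c) first and independently of Theorem \ref{omega5}(ii,iii), which avoids circularity since the proofs of parts (ii) and (iii) of Theorem \ref{omega5} use only row (c), and row (c) rests only on \eqref{dbv}. For rows (a) and (b) I would begin with the congruences $f_1(v) \equiv (v-3)^3$ and $f_2(v) \equiv (v-5)^3 \pmod 7$, both immediate from the coefficients. Hence, for $v$ integral at $7$, $\ord_7(f_1(v)) > 0$ exactly when $v \equiv 3 \pmod 7$ and $\ord_7(f_2(v)) > 0$ exactly when $v \equiv 5 \pmod 7$, which accounts for the six zero entries. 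In the remaining two integral cases, $f_1(3) = 7$ and $f_2(5) = 91$ each have $\ord_7 = 1$; writing $v = 3 + 7a$ (resp.\ $v = 5 + 7a$) with $a$ integral at $7$ and expanding, each term of $f_1(3+7a) - f_1(3)$ has $\ord_7 \ge 2$ because $\ord_7(f_1'(3)) = \ord_7(14) = 1$ (resp.\ $\ord_7(f_2'(5)) = \ord_7(63) = 1$), so the valuation stays equal to $1$. Finally, if $v$ is not integral at $7$ then the leading term $v^3$ strictly dominates every other term of $f_i(v)$, so $\ord_7(f_i(v)) = 3\,\ord_7(v)$.

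Row (c) is then purely formal: \eqref{dbv} gives $\ord_7(\Delta(B_v)) = 3 + \ord_7(f_1(v)) + 7\,\ord_7(f_2(v))$, and substituting rows (a) and (b) yields the entries $4$, $10$, $3$, and $3 + 24\,\ord_7(v)$. For row (d): when $v$ is integral at $7$, the model \eqref{defbv} is integral at $7$ (shown in the proof of Theorem \ref{omega5}) and row (c) gives $\ord_7(\Delta(B_v)) \le 10 < 12$, which forces \eqref{defbv} to be minimal at $7$ (any non-minimal integral model at $7$ has discriminant valuation at least $12$); hence $\ord_7(\Deltamin(B_v)) = \ord_7(\Delta(B_v))$. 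When $v$ is not integral at $7$, Theorem \ref{omega5}(iii) gives $\ord_7(\Deltamin(B_v)) = \ord_7(\Delta(B_v)) - 24\,\ord_7(v) = 3$.

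Row (e) is where the real computation lives. Since $B_v$ has $a_1 = 1$, $a_2 = -1$, $a_3 = 0$, $a_4 = \alpha(v)$, $a_6 = \beta(v)$, its invariant is simply $c_4(B_v) = 9 - 48\alpha(v)$, a polynomial of degree $8$ in $v$; a short computation shows $c_4(B_v) = 7\,\tilde c(v)$ with $\tilde c \in \Z[v]$, and that $\tilde c(v) \not\equiv 0 \pmod 7$ unless $v \equiv 3$ or $5 \pmod 7$. From $j(B_v) = c_4(B_v)^3/\Delta(B_v)$ together with row (d) (for $v$ not integral at $7$ applied to the model $\hat B_v$ of the proof of Theorem \ref{omega5}(iii), for which $c_4(\hat B_v) = v^{-8}c_4(B_v)$), one gets $\ord_7(j(B_v)) = 3\,\ord_7(c_4) - \ord_7(\Delta)$ on the relevant minimal model. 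Plugging in the valuations of $c_4$ and $\Delta$: for $v$ integral with $v \not\equiv 3,5 \pmod 7$, $\ord_7(c_4(B_v)) = 1$ and $\ord_7(j(B_v)) = 3 - 3 = 0$; for $v \equiv 3 \pmod 7$, $\ord_7(c_4(B_v)) \ge 2$ and $\ord_7(j(B_v)) \ge 6 - 4 = 2$; for $v$ not integral at $7$, $c_4(B_v)$ has leading coefficient $105$ of $7$-valuation $1$, so $\ord_7(c_4(B_v)) = 1 + 8\,\ord_7(v)$, whence $\ord_7(c_4(\hat B_v)) = 1$ and $\ord_7(j(B_v)) = 0$.

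The delicate case is $v \equiv 5 \pmod 7$: there one must show $\ord_7(c_4(B_v)) \ge 5$, i.e.\ $\ord_7(\tilde c(5 + 7a)) \ge 4$ for every $a$ integral at $7$, which by Taylor expansion at $v = 5$ reduces to the four explicit divisibilities $7^4 \mid \tilde c(5)$, $7^3 \mid \tilde c'(5)$, $7^2 \mid \tilde c''(5)/2$, $7 \mid \tilde c'''(5)/6$; granting these, $\ord_7(j(B_v)) \ge 15 - 10 = 5$. I expect this last point to be the main obstacle: the bound $\ge 5$ requires pinning down $c_4(B_v)$ to $7$-adic precision $7^5$ along the line $v \equiv 5 \pmod 7$, the one place in the table where a congruence modulo $7$ alone does not suffice and a genuine numerical computation (easily done by hand or machine from the explicit $\alpha(v)$) is forced.
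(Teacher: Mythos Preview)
Your proposal is correct, and for rows (a)--(d) it matches the paper's argument essentially verbatim (including the careful observation about circularity: the paper proves (c) from \eqref{dbv} alone, then uses (c) inside the proof of Theorem~\ref{omega5}(ii) at $p=7$, and finally reads off (d) from Theorem~\ref{omega5}(ii,iii)).

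The only genuine difference is in row~(e). You work with the unfactored invariant $c_4(B_v)=9-48\alpha(v)=7\tilde c(v)$ and then, for the ``delicate'' case $v\equiv 5\pmod 7$, propose to verify by Taylor expansion the four conditions $7^{4-k}\mid \tilde c^{(k)}(5)/k!$ for $k=0,1,2,3$. The paper instead records the factorization
\[
j(B_v)=-\frac{[(v^2-3v-3)(v^2-v+1)(3v^2-9v+5)(5v^2-v-1)]^3}{f_1(v)\,f_2(v)^7},
\]
which amounts to factoring your $\tilde c(v)$ as a product of four quadratics in $\Z[v]$. With this in hand, the case $v\equiv 5\pmod 7$ becomes transparent: each of the four quadratic factors vanishes mod~$7$ at $v=5$, so the numerator has $\ord_7\ge 12$, while rows (a),(b) give $\ord_7$ of the denominator equal to~$7$, yielding $\ord_7(j(B_v))\ge 5$ without any higher-precision computation. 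Likewise $v\equiv 3$ gives one vanishing factor (namely $v^2-v+1$), hence $\ord_7(j)\ge 3-1=2$, and in the remaining residues no factor vanishes, forcing $\ord_7(j)=0$. Your approach and the paper's are logically equivalent---the factored $j$-formula is exactly what your divisibility checks would ultimately confirm---but the factorization replaces the one ``delicate'' numerical verification by a one-line observation.
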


\begin{proof}
If $v$ is not integral at $7$, then 
$\ord_7(f_1(v))= \ord_7(f_2(v))= 3\ord_7(v)$.
If $v$ is integral at $7$, then a direct computation shows that 
$f_1(v), f_2(v) \not\equiv 0 \pmod{7^2}$.
Since $f_1(v) \equiv (v-3)^3 \pmod{7}$ and $f_2(v) \equiv (v-5)^3 \pmod{7}$, 
{(a)} and {(b)} follow. 

By \eqref{dbv} we have $\Delta(B_{v})=-7^3f_1(v)f_2(v)^7$, so (c)
follows from {(a)} and {(b)}.
Assertion {(d)} follows 
directly from {(c)} and Theorem \ref{omega5}(ii,iii).

We compute that
\begin{equation}
\label{jBveqn}
j(B_v) = - \frac{[(v^2 - 3v - 3)(v^2 - v + 1)(3v^2 - 9v + 5)(5v^2 - v - 1)]^3}{
   f_1(v)f_2(v)^7}.
\end{equation}
If $v \equiv 5 \pmod{7}$, then
each quadratic factor in the numerator vanishes modulo $7$.  
If $v \equiv 3 \pmod{7}$, then 
$v^2 - v + 1 \equiv 0 \pmod{7}$.  If $v \not\equiv 3$ or $5 \pmod{7}$, then none of the 
factors in the numerator vanish modulo $7$.  
These remarks together with {(a)} and {(b)} imply the assertions in {(e)}.
\end{proof}

\begin{prop}
\label{isogprop}
For $v \in \Q$ and $B_v$ as above,  
let $\Psi_v$ be the $\Q$-rational subgroup of $B_v$ of order $7$ on which 
$G_\Q$ acts via $\omega^5$.  Let $B'_v$
be the quotient of $B_v$ by $\Psi_v$, so there is 
an isogeny from $B_v$ to $B'_v$ defined over $\Q$.  
Then the isogenous curve $B'_v$ is isomorphic over $\Q$ 
to the twist of $B_{1-v}$ by $\omega^3$.
\end{prop}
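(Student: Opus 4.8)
The plan is to identify $B'_v$ as a member of the parametric families built above, reducing the statement to pinning down a single parameter, and then to confirm this by computing the $7$-isogeny explicitly. Since $B_v\to B'_v$ has kernel $\Psi_v$, the curve $B'_v$ carries the $\Q$-rational order-$7$ subgroup $\Phi_v:=B_v[7]/\Psi_v$. By Remark \ref{Bvreprmk} the representation of $G_\Q$ on $B_v[7]$ has the shape $\bigl(\begin{smallmatrix}\omega^5 & \ast\\ 0 & \omega^2\end{smallmatrix}\bigr)$ (the $(2,2)$-entry being $\omega\cdot\omega^{-5}=\omega^2$ by the Weil pairing), so $G_\Q$ acts on $\Phi_v$ via $\omega^2$. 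On the other hand, twisting an elliptic curve over $\Q$ by the quadratic character $\omega^3$ multiplies its action on the $7$-torsion by $\omega^3$, and $\omega^5\cdot\omega^3=\omega^8=\omega^2$, so $(B_{1-v})^{(\omega^3)}$ likewise carries a $\Q$-rational order-$7$ subgroup with character $\omega^2$. Since $(\omega^2)^4=\omega^2$ is cubic and $(\omega^2)^3=1$, Theorem \ref{moregenthm} applies with no extra twist: both $B'_v$ and $(B_{1-v})^{(\omega^3)}$ are $\Q$-isomorphic to members $A_u$ of the family of Theorem \ref{genthm} for the cubic character $\omega^2$ (with $t=-2$, as in the proof of Theorem \ref{omega5}), and the isomorphism $B_w\cong(A_w)^{(\omega^3)}$ found there gives $(B_{1-v})^{(\omega^3)}\cong A_{1-v}$ over $\Q$. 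So it remains to prove $B'_v\cong A_{1-v}$ over $\Q$.

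For this I would compute the isogeny explicitly: transport the order-$7$ point $P_v$ of Theorem \ref{genthm} to $B_v$ via the isomorphism in the proof of Theorem \ref{omega5}, form the degree-$3$, $\Q(v)$-rational kernel polynomial of $\Psi_v$, apply Vélu's formulas to obtain a Weierstrass model of $B'_v$ over $\Q(v)$, and exhibit an explicit $\Q(v)$-change of coordinates identifying it with $A_{1-v}$. The substitution $v\mapsto 1-v$ is exactly the one to expect here: it realizes the Atkin--Lehner involution $(E,C)\mapsto(E/C,E[7]/C)$ of $X_0(7)$ on the parameter line; it normalizes the order-$3$ automorphism group $\langle\eta\rangle$ of the family (Lemma \ref{etaautlem}), acting by $\eta\mapsto\eta^{-1}$ (an elementary check with \eqref{etadef}); and it satisfies $f_1(1-v)=-f_2(v)$ and $f_2(1-v)=-f_1(v)$, interchanging the two cusp-triples $\{f_1=0\}$ and $\{f_2=0\}$ (the fibres of Kodaira types $I_1$ and $I_7$), as $w_7$ must. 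A convenient consistency check is the $j$-invariant: the pole orders of $j(B'_v)$ at the cusps are those of $j(B_v)$ with $f_1,f_2$ interchanged, so $j(B'_v)$ has denominator $f_1(v)^7f_2(v)$, exactly that of $j(B_{1-v})$ from \eqref{jBveqn}; and once $j(B'_v)=j(A_{1-v})$ is known, the fact that each of $B'_v$, $A_{1-v}$ carries its unique (by Remark \ref{indisogrmk}) $\Q$-rational order-$7$ subgroup with character $\omega^2$, together with the effect of twisting on that character, forces the twist relating them to be trivial.

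The main obstacle is the size of this computation: assembling and simplifying the Vélu model of $B'_v$ and the isomorphism with $A_{1-v}=(B_{1-v})^{(\omega^3)}$ is mechanical but unwieldy by hand, and is best carried out with PARI/GP or Magma. The fact that the parameter $u$ with $B'_v\cong A_u$ is only determined up to the $\langle\eta\rangle$-action is harmless, since $A_{1-v}$, $A_{\eta(1-v)}$ and $A_{\eta^2(1-v)}$ are mutually $\Q$-isomorphic, so it suffices to land at any one of them. Once $B'_v\cong A_{1-v}$ over $\Q(v)$, hence over $\Q$ for every $v\in\Q$ with $\Delta(B_v)\neq0$, the Proposition follows.
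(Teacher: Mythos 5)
Your plan is sound and ultimately rests on the same direct V\'elu computation that the paper itself invokes as its entire proof (citing V\'elu and Bostan--Morain--Salvy--Schost). The conceptual framing you add---the reduction to showing $B'_v\cong A_{1-v}$ via the cubic character $\omega^2$, the Atkin--Lehner interpretation of $v\mapsto 1-v$, and the $j$-denominator sanity check---is correct and illuminating, but, as you acknowledge, it serves only to predict and confirm rather than to replace the explicit calculation, which remains the actual proof in both treatments.
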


\begin{proof}
One can verify this by a direct calculation, using the formulas of V\'elu \cite{velu} 
(see \cite[\S4.1]{bostan}) to exhibit the isogeny. 
(See especially Proposition 4.1 of \cite{bostan} and the formulas for $\tilde{A}$ and $\tilde{B}$
in the paragraph after its proof.)
\end{proof}

Note that $B'_v$ 
has a subgroup of order $7$ on which $G_\Q$ acts via $\omega^2$,
namely, $B_v[7]/\Psi_v$.  
Also, the twist of $B_{1-v}$ by $\omega^3$ is
the quadratic twist  of $B_{1-v}$ by $-7$.

\begin{cor}
\label{dmbb}
Suppose $v \in \Q$.  Then:
\begin{enumerate}
\itemsep=5pt
\item
$
\displaystyle
\frac{\Deltamin(B'_v)}{\Deltamin(B_v)} = 7^{s_v}\biggl(\frac{v^3-2v^2-v+1}{v^3-v^2-2v+1}\biggr)^6
$
for some $s_v \in \{ \pm 6 \}$, and
\item
$
\displaystyle
\ord_7\biggl(\frac{\Deltamin(B'_v)}{\Deltamin(B_v)}\biggr) = 
\begin{cases}
0 & \text{if $v$ is integral at $7$ and $v \equiv 3$ or $5 \pmod{7}$,} \\
6 & \text{otherwise.}
\end{cases}
$
\end{enumerate}
\end{cor}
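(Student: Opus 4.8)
The plan is to reduce the Corollary to the behaviour of the quadratic twist by $-7$, via Proposition \ref{isogprop}, which gives a $\Q$-isomorphism $B'_v\cong(B_{1-v})^{(-7)}$. Write $f_1(v)=v^3-2v^2-v+1$ and $f_2(v)=v^3-v^2-2v+1$ as in Lemma \ref{tablem}. The first, purely algebraic, step is the pair of identities $f_1(1-v)=-f_2(v)$ and $f_2(1-v)=-f_1(v)$. Since $1-v$ has the same denominator $d$ as $v$, Corollary \ref{dmincor} applied to $B_{1-v}$ gives $\Deltamin(B_{1-v})=-7^3d^{24}f_1(1-v)f_2(1-v)^7=-7^3d^{24}f_1(v)^7f_2(v)$, so that
\[
\frac{\Deltamin(B_{1-v})}{\Deltamin(B_v)}=\frac{f_1(v)^7f_2(v)}{f_1(v)f_2(v)^7}=\Bigl(\frac{v^3-2v^2-v+1}{v^3-v^2-2v+1}\Bigr)^{6}.
\]

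The second step compares $\Deltamin(B'_v)=\Deltamin\bigl((B_{1-v})^{(-7)}\bigr)$ with $\Deltamin(B_{1-v})$. For every prime $\ell\neq7$ the field $\Q(\sqrt{-7})$ is unramified at $\ell$ — and split at $\ell=2$, since $-7\equiv1\pmod8$ — so twisting by $-7$ multiplies the invariants $c_4,c_6,\Delta$ of a minimal model at $\ell$ by the $\ell$-adic units $(-7)^2,(-7)^3,(-7)^6$ (and changes nothing $2$-adically); the twisted model is thus still minimal at $\ell$ and $\ord_\ell(\Deltamin(B'_v))=\ord_\ell(\Deltamin(B_{1-v}))$ for all $\ell\neq7$. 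Moreover the twisted Weierstrass model has discriminant $(-7)^6$ times that of $B_{1-v}$, a positive multiple, so $\Deltamin(B'_v)$, $\Deltamin(B_{1-v})$ and $\Deltamin(B_v)$ all carry the sign $-\mathrm{sign}\bigl(f_1(v)f_2(v)\bigr)$. Combining this with the displayed identity shows $\Deltamin(B'_v)/\Deltamin(B_v)=7^{s_v}\bigl(f_1(v)/f_2(v)\bigr)^6$ for a well-defined integer $s_v$, which is the shape asserted in (i); it remains to compute $\ord_7(\Deltamin(B'_v))$, which will give both (ii) and the value of $s_v$.

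The last step is where the real work lies. From Lemma \ref{tablem}(d),(e) applied to $B_{1-v}$ — noting that $v\equiv3$, $v\equiv5$, or $v$ otherwise (mod $7$) makes $1-v\equiv5$, $\equiv3$, or otherwise — together with $j=c_4^3/\Deltamin$ and $1728\,\Deltamin=c_4^3-c_6^2$ for a minimal model of $B_{1-v}$, one pins down enough of the triple $(\ord_7c_4,\ord_7c_6,\ord_7\Delta)$ for $B_{1-v}$: in the three cases $\ord_7c_6$ equals $5$, equals $2$, and is $\ge2$, respectively. Twisting by $-7$ scales these by $(-7)^2,(-7)^3,(-7)^6$; the minimality test at $7$ (Tate's algorithm, or \cite[Proposition III.1.4(ii)]{silverman}) — the twisted model is non-minimal precisely when $\ord_7c_4\ge2$, $\ord_7c_6\ge3$ and $\ord_7\Delta\ge6$, and then a single division by $7$ suffices — yields $\ord_7(\Deltamin(B'_v))=4$, $10$, $9$ in the three cases. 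Subtracting $\ord_7(\Deltamin(B_v))$ (namely $4$, $10$, $3$, from Lemma \ref{tablem}(d)) gives the table in (ii), and feeding this together with Lemma \ref{tablem}(a),(b) into $\ord_7\bigl(7^{s_v}(f_1(v)/f_2(v))^6\bigr)$ gives $s_v=-6$ if $v\equiv3\pmod7$ and $s_v=6$ otherwise, completing (i). The main obstacle is exactly this $7$-adic bookkeeping: one must control the valuations of $c_4$ and $c_6$ — not merely those of $\Delta$ and $j$, which is all Lemma \ref{tablem} records — and handle the ramified twist carefully. A more computational alternative, in the spirit of the proof of Theorem \ref{omega5}, would be to make the V\'elu isogeny $B_v\to B'_v$ explicit, express $c_4(B'_v),c_6(B'_v),\Delta(B'_v)$ as rational functions of $v$, and read off $\Deltamin(B'_v)$ from resultant computations.
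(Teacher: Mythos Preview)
Your argument is correct and follows the same overall architecture as the paper's: reduce via Proposition~\ref{isogprop} to comparing $\Deltamin(B_{1-v})$ with its twist by $-7$, use the identity $f_i(1-v)=-f_{3-i}(v)$ and Corollary~\ref{dmincor} to get the factor $(f_1/f_2)^6$, and then sort out the power of~$7$.

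The one place where you and the paper diverge is the $7$-adic step. The paper proceeds more qualitatively: it invokes the general fact that a ramified quadratic twist at an odd prime changes $\ord_p(\Deltamin)$ by exactly $\pm 6$, and then, since $j(B_{1-v})$ is $7$-integral (Lemma~\ref{tablem}(e)) and hence $B'_v$ has potentially good reduction at~$7$, the Kodaira classification bounds $0\le\ord_7(\Deltamin(B'_v))\le 10$; comparing with $\ord_7(\Deltamin(B_{1-v}))\in\{3,4,10\}$ forces the sign of~$s_v$. You instead extract the valuations of $c_4$ and $c_6$ for $B_{1-v}$ from Lemma~\ref{tablem}(d,e) via $j=c_4^3/\Delta$ and $1728\Delta=c_4^3-c_6^2$, twist, and apply the explicit minimality criterion at~$7$. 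Your route is a bit more computational but entirely self-contained---it does not need to quote the Kodaira bound or the $\pm 6$ twist fact up front, and it recovers $s_v\in\{\pm 6\}$ as a by-product rather than an input. One small remark: your parenthetical ``non-minimal precisely when $\ord_7c_4\ge2$, $\ord_7c_6\ge3$, $\ord_7\Delta\ge6$'' refers to the \emph{pre-twist} valuations (i.e., those of $B_{1-v}$), which you should say explicitly; as written it could be misread as a claim about the twisted model's invariants, where the thresholds are $4,6,12$.
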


\begin{proof}
Let $f_1(v)=v^3-2v^2-v+1$ and $f_2(v)=v^3-v^2-2v+1$ as in Lemma \ref{tablem},
let $B^{(-7)}_{1-v}$ denote the quadratic twist 
of $B_{1-v}$ by $-7$, and let 
\begin{equation}
\label{j}
s_v := \ord_7\biggl(\frac{\Deltamin(B^{(-7)}_{1-v})}{\Deltamin(B_{1-v})}\biggr) \in \{ \pm 6 \}.
\end{equation}

Note that $f_1(1-v)=-f_2(v)$ and $f_2(1-v)=-f_1(v)$.
Since $v$ and $1-v$ have the same denominator, 
by Corollary \ref{dmincor} we have 
$$\Deltamin(B_{1-v})/\Deltamin(B_{v}) = f_1(v)^6/f_2(v)^6.$$
By Proposition \ref{isogprop} we have $B'_v \cong B^{(-7)}_{1-v}$.
Thus
\begin{equation}
\label{j2}
\frac{\Deltamin(B'_v)}{\Deltamin(B_v)} = 
\frac{\Deltamin(B^{(-7)}_{1-v})}{\Deltamin(B_{v})} =
\frac{7^{s_v} \Deltamin(B_{1-v})}{\Deltamin(B_{v})} =
7^{s_v}\biggl(\frac{f_1(v)}{f_2(v)}\biggr)^6,
\end{equation}
proving (i).

To prove (ii) we need to compute $s_v$.  Using Lemma \ref{tablem}(e), 
it follows that
$$
\ord_7\big(j(B^{(-7)}_{1-v})\big) = \ord_7\big(j(B_{1-v})\big) \ge 0.
$$
Hence,  by Tate's algorithm (see for example \cite[Table 15.1]{silverman}), we have 
$$
0 \le \ord_7\big(\Deltamin(B^{(-7)}_{1-v})\big) \le 10.
$$  
It follows that the integer $s_v$ 
of \eqref{j} satisfies
$$
s_v = \begin{cases}\phantom{-}6&\text{if $\ord_7\big(\Deltamin(B_{1-v})\big)<6$,}\\
   -6&\text{if $\ord_7\big(\Deltamin(B_{1-v})\big)\ge6$.}\end{cases}
$$
Now Lemma \ref{tablem}(d) shows that $s_v = -6$ if  $v$ is both integral 
at $7$ and congruent to $3 \pmod{7}$, and $s_v = 6$ otherwise.
Assertion (ii) now follows from \eqref{j2} and Lemma \ref{tablem}(a,b).
\end{proof}

\begin{prop}
\label{redtype}  
Suppose that $v \in \P(\Q)$. Then:
\begin{enumerate}
\item
The conductor of the elliptic curve $B_v$ is of the form
$
49\prod^{m}_{i=1}\ell_i,
$
where  the $\ell_i$'s are distinct primes such that $\ell_i \equiv \pm 1 \pmod{7}$.  
\item
 The curve $B_v$ has potentially good reduction at $7$.  Further, 
if $v$ is integral at $7$ and $v \equiv 3$ or $5 \pmod{7}$,
then $B_v$ has potentially ordinary reduction at $7$, and for all other $v \in \P(\Q)$, 
$B_v$ has potentially supersingular reduction at $7$.   
\item
The conductor of $B_v$ is $49$ if and only if $v \in \{0,1,\infty, 2, 1/2, -1\}$.
\end{enumerate}
\end{prop}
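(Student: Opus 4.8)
The plan is to analyze the reduction of $B_v$ prime by prime, separating the prime $7$ from the other primes, and using the discriminant and $j$-invariant formulas already established. For a prime $\ell \neq 7$: by Corollary \ref{dmincor} and the formula $\Delta(B_v) = -7^3 f_1(v)f_2(v)^7$ (where $f_1(v) = v^3-2v^2-v+1$, $f_2(v) = v^3-v^2-2v+1$), the primes of bad reduction away from $7$ are exactly those dividing $f_1(v)f_2(v)$ (after clearing denominators), together with possibly the prime at infinity which is handled by the change of variables in Theorem \ref{omega5}(iii). To show $B_v$ has multiplicative (semistable) reduction at such $\ell$, I would check that $c_4(B_v)$ is not divisible by $\ell$ whenever $f_1(v)f_2(v)$ is: this follows from the fact, noted in the proof of Theorem \ref{omega5}, that the resultant of $\Delta(B_v)$ and $c_4(B_v)$ is a power of $7$, so these two polynomials have no common root modulo any $\ell \neq 7$. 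Hence for $\ell \neq 7$ the reduction is multiplicative, the exponent of $\ell$ in the conductor is $1$, and $B_v$ is a Tate curve over $\Q_\ell$. To get the congruence condition $\ell_i \equiv \pm 1 \pmod 7$, I would invoke Remark \ref{Bvreprmk}: the mod-$7$ representation on $B_v[7]$ is upper triangular with diagonal $(\omega^5, \omega^2)$, and for a Tate curve the image of inertia at $\ell$ is generated by a unipotent element; comparing the action of Frobenius on the two Jordan--Hölder factors forces $\ell \equiv \omega(\mathrm{Frob}_\ell)^2 \equiv \omega(\mathrm{Frob}_\ell)^5 \pmod 7$, i.e., $\omega(\mathrm{Frob}_\ell)^3 = 1$, which together with $\ell \equiv \omega(\mathrm{Frob}_\ell) \pmod 7$ gives $\ell^3 \equiv 1 \pmod 7$, hence $\ell \equiv 1, 2,$ or $4 \pmod 7$; but the nonsplit case replaces $\ell$ by $-\ell$, so $\ell \equiv \pm 1 \pmod 7$ (noting $2 \equiv -5$, $4 \equiv -3$ are not among $\pm 1$, so one must check the sign carefully — in the nonsplit multiplicative case the relevant congruence becomes $\ell \equiv -1 \pmod 7$).

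For the prime $7$, part (ii): Lemma \ref{tablem}(e) gives $\ord_7(j(B_v)) \geq 0$ in all cases, so $B_v$ has potentially good reduction at $7$, and from Lemma \ref{tablem}(d) the minimal discriminant has $\ord_7 \in \{3,4,10\}$, none divisible by $12$, confirming additive (hence the exponent $49 = 7^2$ in the conductor, since $7 \neq 2, 3$ so the conductor exponent at $7$ is exactly $2$ by the Ogg--Saito formula / Tate's algorithm for additive potentially good reduction). The ordinary-versus-supersingular dichotomy: in the cases $v \equiv 3$ or $5 \pmod 7$, Lemma \ref{tablem}(e) gives $\ord_7(j(B_v)) > 0$, so the reduction of the minimal model over a field where $B_v$ acquires good reduction has $j \equiv 0 \pmod{\mathfrak{p}}$; I would instead argue directly from Remark \ref{Bvreprmk} that the mod-$7$ representation restricted to the decomposition group at $7$ is reducible, and distinguish the ordinary case (where the reduction is ordinary) by the fact that in the potentially ordinary case the two diagonal characters $\omega^5, \omega^2$ are $\{$unramified twist of $1$, unramified twist of $\omega\}$ after restriction to inertia, whereas $\omega^2, \omega^5$ themselves are ramified — so the correct criterion is via the local behavior computed from the discriminant: $\ord_7(\Deltamin) \in \{4,10\}$ (the "$j$ integral, but close to the boundary" cases) corresponds to potentially ordinary, and $\ord_7(\Deltamin) = 3$ to potentially supersingular. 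This matches the case division in the table exactly. I expect this step — cleanly identifying potentially ordinary versus potentially supersingular from the available data — to be the main obstacle, and I would resolve it by combining Lemma \ref{tablem}(c,d,e) with the structure of the inertia action implied by Remark \ref{Bvreprmk}.

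For part (iii): the conductor equals $49$ precisely when there are no bad primes $\ell \neq 7$, i.e., when $f_1(v)f_2(v)$ is, up to sign and powers of $7$, a unit — equivalently (clearing the denominator $d$ of $v$, using $\Deltamin(B_v) = -7^3 d^{24} f_1(v) f_2(v)^7$ from Corollary \ref{dmincor}) when the integer $d^{24} f_1(v) f_2(v)^7 / 7^{\ord_7(\cdots)}$ is $\pm 1$. Since $f_1, f_2$ are monic cubics, writing $v = a/d$ in lowest terms, $d^3 f_1(v)$ and $d^3 f_2(v)$ are integers; their product must be $\pm$ a power of $7$. I would then show this forces $d = 1$ (so $v$ is an integer, or $v = \infty$) and then solve $f_1(v) f_2(v) = \pm 7^m$ over $\Z$: since $f_1, f_2$ take small values, a short search together with the congruence data of Lemma \ref{tablem}(a,b) (which limits $\ord_7$ of each factor to $0$ or $1$ for integral $v$, hence $f_1(v) f_2(v) \in \{\pm 1, \pm 7, \pm 49\}$) pins down $v \in \{0, 1, 2, 1/2, -1\}$ and, handling $\infty$ via the $w = 1/v$ model of Theorem \ref{omega5}(iii), also $v = \infty$. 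Finally I would confirm these six values indeed give conductor $49$ by checking that $f_1(v) f_2(v)$ is $\pm$ a power of $7$ for each: $v = 0$ gives $f_1 f_2 = 1$; $v = 1$ gives $f_1 f_2 = (-1)(-1) = 1$; $v = 2$ gives $f_1 f_2 = (-1)(1) = -1$; $v = -1$ gives $f_1 f_2 = (-1)(1)= -1$; $v = 1/2$ pairs with $v = -1$ under $v \mapsto 1-v$ composed with $v \mapsto 1/v$ (the $S_3$-action), and $v = \infty$ with $v = 0$; alternatively recall these are exactly $\{0,1,\infty,2,1/2,-1\}$, the orbit noted after Corollary \ref{dmbb}.
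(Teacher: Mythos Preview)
Your approach differs from the paper's in several places; some differences are valid shortcuts, but two are genuine gaps.

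For multiplicative reduction at $\ell \ne 7$, your resultant argument is cleaner than the paper's, which instead argues that additive reduction would force the ramification index of $\ell$ in $\Q(B_v[7])/\Q$ to be divisible by $2$ or $3$, impossible since $[\Q(B_v[7]):\Q(\bmu_7)] \in \{1,7\}$ and $\ell$ is unramified in $\Q(\bmu_7)$. Likewise, for bad reduction at $7$ your observation $\ord_7(\Deltamin) > 0$ (combined with $j$ integral at $7$) suffices; the paper instead rules out good ordinary reduction (no unramified quotient of $B_v[7]$ over $\Q_7$, since $\omega^2$ and $\omega^5$ are both ramified) and good supersingular reduction ($B_v[7]$ would be irreducible over $\Q_7$) separately. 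However, your derivation of the congruence $\ell \equiv \pm 1 \pmod 7$ is wrong: you arrive at $\omega(\mathrm{Frob}_\ell)^3 = 1$, i.e., $\ell \in \{1,2,4\} \pmod 7$, and then try to repair this via the nonsplit case. The paper's correct argument compares \emph{ratios} of characters: the Tate composition factors over $\Q_\ell$ (up to an unramified quadratic twist) are $\bmu_7$ and $\Z/7\Z$, so the ratio of the two characters is $\omega_\ell^{\pm 1}$; globally the ratio of $\omega^5$ and $\omega^2$ restricted to $G_{\Q_\ell}$ is $\omega_\ell^{\pm 3}$. Equating gives $\omega_\ell^2 = 1$ or $\omega_\ell^4 = 1$, and since $|\F_7^\times| = 6$ both force $\omega_\ell^2 = 1$, i.e., $\ell \equiv \pm 1 \pmod 7$.

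The second gap is the ordinary/supersingular distinction in (ii). You assert that $\ord_7(\Deltamin) \in \{4,10\}$ corresponds to potentially ordinary and $\ord_7(\Deltamin) = 3$ to potentially supersingular, but this is not a standard criterion and you give no justification; the attempt via inertia characters doesn't go through as stated, since $\omega^2$ and $\omega^5$ are ramified in both cases. The paper's argument is direct: the unique supersingular $j$-invariant over $\F_7$ is $1728 \equiv -1$, and one checks from \eqref{jBveqn} that $j(B_v) \equiv -1 \pmod 7$ precisely when $v$ is not integral at $7$ or $v \not\equiv 3,5 \pmod 7$ (while $j(B_v) \equiv 0$ in the remaining cases by Lemma~\ref{tablem}(e)). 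Finally, in (iii) your claim that the denominator $d$ must be $1$ is false, since $v = 1/2$ is on the list with $d = 2$; your search can still be made to work, but the paper avoids any such case analysis by noting that conductor-$49$ curves have $j \in \{-15^3, 255^3\}$ and solving for $v$ via \eqref{jBveqn}.
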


\begin{proof}   
Lemma \ref{tablem}(e) shows that $j(B_v)$ is integral at $7$ for all $v \in \P(\Q)$.  
Hence $B_v$ always has potentially good reduction at $7$, giving (ii). However, $B_v$ 
cannot have good reduction at $7$. One sees this by considering the action of $G_{\Q_7}$ 
on $B_v[7]$.  If $B_v$ had good ordinary reduction at $7$, then $B_v[7]$ would have a 
nontrivial unramified quotient over $\Q_7$ (by \cite[Proposition 11]{serre}), which 
is not the case since $\omega^2$ and $\omega^5$ are ramified characters of $G_{\Q_7}$.  
If $B_v$ had good supersingular reduction, then $B_v[7]$ would be irreducible  over $\Q_7$
(by \cite[Proposition 12(c)]{serre}), which is also not the case. It follows that 
the conductor of $B_v$ is $49M$, where $M$ is not divisible by $7$.   

By examining the elliptic curves over $\F_7$,
we see that an elliptic curve $E$ over $\Q$ has supersingular or potentially 
supersingular reduction at $7$ if and only if $j(E) \equiv -1 \pmod{7}$.  
If $v$ is integral at $7$ and  satisfies   $v \equiv 3$ or $5 \pmod{7}$, then  
Lemma \ref{tablem}(e) shows that $j(B_v) \equiv 0 \pmod{7}$, and so $B_v$ has 
potentially ordinary reduction at $7$.  For the other $v$'s, the formula for 
$j(B_v)$ in \eqref{jBveqn} shows that we indeed have $j(B_v) \equiv -1 \pmod{7}$.
This proves (ii).

Suppose $\ell$ is a prime dividing $M$.  If $B_v$ has additive reduction at $\ell$, 
then $B_v$ becomes semistable over $\Q(B_v[7])$ and the ramification degree of 
$\ell$ in $[\Q(B_v[7]):\Q]$ is divisible by $2$ or $3$. (A good summary of the ramification 
properties in the non-semistable case can be found in \cite[\S5.6]{serre},
especially Proposition 23(b).)  
This contradicts the facts that $\ell$ is unramified in $\Q(\bmu_7)/\Q$ and 
$[\Q(B_v[7]):\Q(\bmu_7)]$ is (by Remark \ref{Bvreprmk}) $1$ or $7$. Thus, $B_v$ 
has multiplicative reduction at $\ell$.  It follows that $M$ is not divisible by $\ell^2$.    

The action of $G_{\Q_{\ell}}$ on $B_v[7^{\infty}]$ can be described by the Tate 
parametrization. One sees that  $B_v[7]$, or an unramified quadratic twist of 
$B_v[7]$,  has composition factors isomorphic over $\Q_\ell$ to $\bmu_7$ and $\Z/7\Z$. Let $\omega_{\ell}$ denote the restriction of $\omega$ to  $G_{\Q_{\ell}}$.  
Thus, $G_{\Q_{\ell}}$ acts on the composition factors by two $\F_7^{\times}$-valued 
characters whose ratio is  $\omega_{\ell}$, or its inverse. On the other hand, 
$G_{\Q_{\ell}}$ acts on the composition factors via  $\omega_{\ell}^2$ and  $\omega_{\ell}^5$, whose 
ratio is $\omega_{\ell}^{\pm 3}$,  a character of order $1$ or $2$.  Therefore 
$\omega_{\ell}$ 
has order $1$ or $2$, so $\ell \equiv \pm 1 \pmod{7}$, giving (i).    

There are exactly two $j$-invariants of curves of conductor $49$.  Using \eqref{jBveqn} 
we see that these correspond precisely to the six values of $v$ listed in (iii).
\end{proof}

\begin{rem}\label{49vs}  There is an $S_3$-action on $\P(\Q)$ defined by the linear 
fractional transformations $\eta$ of \eqref{etadef} and 
$\tau$ defined by $\tau(v) = 1-v$. Since the fixed points of $\eta$ (the 
primitive sixth roots of unity) are not in $\Q$, the orbits under the action of $\eta$ 
always have length 3.  There are just two orbits of length 3 under 
the action of $S_3$.   For $v$ is in such an orbit if and only if $1-v = \eta^i(v)$ 
for some $i \in \{0,1,2\}$. One easily determines the possible orbits of that type: 
$\{0,1,\infty\}$ is one, $\{2, 1/2, -1\}$ is the other.  
By Proposition \ref{redtype}(iii) the corresponding curves $B_v$ have 
conductor $49$, and hence have complex multiplication.  One can also explain this as follows.

By Proposition \ref{isogprop}, the elliptic curves  $B_{1-v}^{(-7)}$ and  $B_v/\Psi_v$ are $\Q$-isomorphic for 
every $v \in \P(\Q)$. However, if $1-v \in \{v, \eta(v), \eta^2(v)\}$, then 
Lemma \ref{etaautlem} shows that $B_{1-v}$ is 
$\Q$-isomorphic to $B_{v}$, so there is an isomorphism 
$B_v/\Psi_v \cong B_{v}$ defined over $F = \Q(\sqrt{-7})$. Therefore, $B_v$ has 
an endomorphism of degree 7 defined over $F$.  This means that $B_v$  has 
CM by $F$.  Furthermore, since a CM curve has no primes of multiplicative reduction, 
Proposition \ref{redtype} shows that the the conductor of $B_v$ is $49$.  
If $v \in \{0,1,\infty\}$, then $j(B_v) = -15^3$ and $\End(B_v)$ is the maximal order in $F$.  
If $v \in \{2, 1/2, -1\}$, then $j(B_v) = 255^3$ and  $\End(B_v)$ is the nonmaximal order  
of conductor $2$ in $F$.
\end{rem}

\section{The image of $\rho_{E,7}$}    
\label{image7sect}
Suppose $E$ is an elliptic curve over $\Q$ with a $\Q$-isogeny 
of prime degree $p \ge 7$.  We retain the notation of \S\ref{rhosect}.  
Note that since $\psi\varphi = \omega$, we have that $\psi\varphi^{-1}$ has order $2$ 
if and only if $\psi^4 = \omega^2$.

By Theorem \ref{pthm} (\cite[Theorem 1]{greenberg}), if $E$ is $p$-exceptional then 
$\psi\varphi^{-1}$ has order $2$.  If $p > 7$ then Proposition \ref{pprop} 
(\cite[Remark 4.2.1]{greenberg}) says that if $\psi\varphi^{-1}$ has order $2$ 
then $E$ has CM by $\Q(\sqrt{-p})$.  
If $E$ has CM, then $\im(\rho_{E,p})$ is a $p$-adic Lie group of dimension 
$2$, and so it cannot contain a Sylow pro-$p$ subgroup of $\Aut(T_p(E))$.  
Thus for $p > 7$, an elliptic curve over $\Q$ is $p$-exceptional 
if and only if it has CM by $\Q(\sqrt{-p})$.
 
However, for $p=7$, it is possible for $\psi\varphi^{-1}$ to have order 2 even if 
$E$ does not have complex multiplication.  For example, for every $v \in \Q$,  
the curve $B_v$ of \S\ref{speccasesect} has a $\Q$-isogeny of degree 7 with 
$\psi = \omega^5$ and $\psi\varphi^{-1} = \omega^3$ of order $2$.
In this section we will use Theorem \ref{Deltacriterion} to study 
$7$-exceptional curves.  We assume from now on that $p=7$.

For $j \in \Z$, let  $C_j$ denote the curve
\begin{equation}
\label{cjdef}
w^7 = 7^{j}\biggl(\frac{v^3-2v^2-v+1}{v^3-v^2-2v+1}\biggr).
\end{equation}

\begin{lem}
\label{cjempty}
Let $C_j$ be as above.
\begin{enumerate}
\item
For every $j\in\Z$, the curve $C_j$ has genus $12$.  
\item
If $7 \nmid j$, then $C_j(\Q) = \emptyset$.
\end{enumerate}
\end{lem}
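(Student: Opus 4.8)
The plan is to prove the two parts separately, using the explicit model \eqref{cjdef} for $C_j$ together with results already established for $B_v$ in \S\ref{speccasesect}.

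For part (i), I would exhibit $C_j$ as a degree-$7$ cyclic cover of the $v$-line $\P$ and apply the Riemann--Hurwitz formula. Writing $g(v) = f_1(v)/f_2(v)$ with $f_1(v) = v^3-2v^2-v+1$ and $f_2(v) = v^3-v^2-2v+1$ as in Lemma \ref{tablem}, the map $C_j \to \P$ given by $(w,v) \mapsto v$ has degree $7$, and since $7$ is prime the only ramification occurs over the points where $g(v)$ has a zero or pole of order not divisible by $7$; there the fiber is a single point with ramification index $7$. One checks that $f_1$ and $f_2$ are separable (their discriminants are $49$, a power of $7$ but nonzero over $\Q$) and coprime to each other, and that $\deg f_1 = \deg f_2 = 3$ so $g$ has neither a zero nor a pole at $v = \infty$. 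Hence $g$ has exactly $6$ simple zeros and poles (the $3$ roots of $f_1$ and the $3$ roots of $f_2$), giving $6$ totally ramified points of $C_j \to \P$. Riemann--Hurwitz then gives $2g(C_j) - 2 = 7(2\cdot 0 - 2) + 6\cdot(7-1) = -14 + 36 = 22$, so $g(C_j) = 12$. (Note this argument is independent of $j$, since multiplying the right-hand side of \eqref{cjdef} by the nonzero constant $7^j$ does not change the ramification behaviour.)

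For part (ii), the point is that $C_j(\Q_7) = \emptyset$ when $7 \nmid j$, which of course forces $C_j(\Q) = \emptyset$. Suppose $(w,v) \in C_j(\Q_7)$. By Lemma \ref{tablem}(a,b), for any $v \in \P(\Q_7)$ the valuation $\ord_7(g(v)) = \ord_7(f_1(v)) - \ord_7(f_2(v))$ lies in $\{-1,0,1\}$: the table shows that at most one of $f_1(v),f_2(v)$ is divisible by $7$, and then only to the first power, when $v$ is integral at $7$, while if $v$ is not integral at $7$ then $\ord_7(f_1(v)) = \ord_7(f_2(v)) = 3\ord_7(v)$ so the difference is $0$. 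Therefore $\ord_7(w^7) = j + \ord_7(g(v)) \in \{j-1, j, j+1\}$. But $\ord_7(w^7) = 7\ord_7(w)$ is divisible by $7$, and since $7 \nmid j$ none of $j-1, j, j+1$ is divisible by $7$ (as $7 > 3$). This contradiction shows $C_j(\Q_7) = \emptyset$, hence $C_j(\Q) = \emptyset$.

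The genus computation in part (i) is entirely routine once the separability and coprimality of $f_1, f_2$ are recorded; the only mild subtlety is confirming there is no ramification at $v = \infty$, which follows from the equality of degrees $\deg f_1 = \deg f_2$. The main point of the lemma is really part (ii), and even there the heavy lifting has already been done in Lemma \ref{tablem}: the argument is just the observation that a $7$-th power has valuation divisible by $7$, while the constant $7^j$ shifts the valuation of $g(v)$ by an amount that, modulo $7$, cannot be cancelled by the bounded quantity $\ord_7(g(v)) \in \{-1,0,1\}$ unless $7 \mid j$. So I do not anticipate a genuine obstacle; the proof is a short bookkeeping argument resting on the earlier table.
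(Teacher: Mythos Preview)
Your argument for part~(i) is correct and is the same as the paper's: six totally ramified branch points in a degree-$7$ cyclic cover of $\P$, and Riemann--Hurwitz.

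Part~(ii) has a genuine gap. The sentence ``since $7 \nmid j$ none of $j-1, j, j+1$ is divisible by $7$ (as $7 > 3$)'' is simply false: for $j = 1$ we have $j-1 = 0 \equiv 0 \pmod 7$, and for $j = 6$ we have $j+1 = 7$. Concretely, your valuation argument shows $\ord_7(g(v)) \in \{-1,0,1\}$, and this does exclude $\Q_7$-points on $C_j$ when $j \equiv 2,3,4,5 \pmod 7$; but when $j \equiv 1 \pmod 7$ the choice $v \equiv 5 \pmod 7$ gives $\ord_7(g(v)) = -1$ and hence $j + \ord_7(g(v)) \equiv 0 \pmod 7$, so the valuation obstruction vanishes (and symmetrically for $j \equiv -1 \pmod 7$ with $v \equiv 3 \pmod 7$). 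Thus pure valuation bookkeeping cannot finish the case $j \equiv \pm 1 \pmod 7$.

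The paper's proof agrees with yours for $j \equiv 2,3,4,5 \pmod 7$, but for the remaining residues it reduces (via the isomorphism $(v,w) \mapsto (1/v,1/w)$ between $C_j$ and $C_{-j}$) to $j = 1$, and there performs a finite check that neither of the two resulting integral equations has a solution modulo $7^3$. You need some finer-than-valuation argument of this kind to close the gap.
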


\begin{proof}
The curves $C_j$ are degree 7 covers of $\P(\C)$ with six branch points, each with ramification 
degree 7, so by the Riemann-Hurwitz formula they have genus $12$. 

To prove (ii), we will show that $C_j(\Q_7) = \emptyset$ if $7 \nmid j$.  
The map $(v,w) \mapsto (1/v, 1/w)$ defines an isomorphism from $C_j$ to $C_{-j}$.
Since $C_j \cong C_{j'}$ if $j \equiv j' \pmod{7}$, 
it suffices to consider just $j = 1$, $2$, and $3$.
Then if $(v,w)$ is a point on $C_j$ or $C_{-j}$ defined over $\Q_7$, and $v \in \Z_7$, 
then $w \in \Z^{\times}_7$; this follows immediately from  Lemma \ref{tablem}(a,b), 
which is clearly valid even for $v \in \Q_7$.  
Thus, to show that $C_j(\Q_7)$ is empty, it suffices to show that neither of 
the equations
$$
\renewcommand{\arraystretch}{1.5}
\begin{array}{c}
v^3-2v^2-v+1 = 7^jw^7(v^3-v^2-2v+1), \\   
v^3 - v^2-2v+1 = 7^jw^7(v^3-2v^2-v+1)\phantom{,}
\end{array}
$$
has solutions $v, w \in \Z_7$.  
If $j = 2$ or $3$, this follows from Lemma \ref{tablem}(a,b) since the powers of $7$ on the 
two sides differ.  If $j = 1$, then one finds easily that neither  equation
has a solution modulo $7^3$.  
\end{proof}

Recall the elliptic curve $B_v$ defined by \eqref{defbv},
and the definition of $p$-exceptional in Definition \ref{excdef}.

\begin{prop}
\label{gclem} 
Suppose $v \in \Q$.  Then the following are equivalent:
\begin{enumerate}
\item 
$B_v$ is $7$-exceptional;
\item
there is a $w \in \Q$ such that $(v,w) \in C_0(\Q)$.   
\end{enumerate} 
\end{prop}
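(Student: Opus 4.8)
The plan is to apply Theorem \ref{Deltacriterion} to $E=B_v$ and $E'=B'_v=B_v/\Psi_v$, and then to convert the resulting condition on minimal discriminants into a statement about seventh powers of rationals. First I would check the hypotheses of that theorem. By Remark \ref{Bvreprmk} the character of the isogeny $B_v\to B'_v$ — that is, of the action of $G_\Q$ on $\Psi_v$ — is $\psi=\omega^5$, while $G_\Q$ acts on $B_v[7]/\Psi_v$ by $\varphi=\omega^2$; hence $\psi\varphi^{-1}=\omega^3$ has order $2$ because $\omega$ has order $6$. By Proposition \ref{redtype}(i) the conductor of $B_v$ is $49\prod_{i=1}^m\ell_i$ with the $\ell_i$ distinct primes, and the proof of that proposition shows $B_v$ has multiplicative reduction at each $\ell_i$; at every other prime different from $7$ it has good reduction. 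So $B_v$ is semistable at all primes dividing its conductor except possibly $7$, and Theorem \ref{Deltacriterion} gives: $B_v$ is $7$-exceptional if and only if $\Deltamin(B'_v)/\Deltamin(B_v)$ lies in the subgroup $\Gamma:=7^{\Z}(\Q^{\times})^7$ of $\Q^{\times}$ generated by $7$ and all seventh powers.

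Next I would bring in Corollary \ref{dmbb}(i), which gives $\Deltamin(B'_v)/\Deltamin(B_v)=7^{s_v}g(v)^6$ with $s_v\in\{\pm6\}$ and $g(v)=f_1(v)/f_2(v)$ in the notation of Lemma \ref{tablem}; here $g(v)\in\Q^{\times}$ for every $v\in\Q$, since neither $f_1$ nor $f_2$ has a rational root (the only candidates $\pm1$ are not roots), and in particular $\Delta(B_v)\ne0$ by \eqref{dbv}. The group $\Q^{\times}/\Gamma$ is annihilated by $7$, so raising to the sixth power is an automorphism of it; hence $7^{s_v}g(v)^6\in\Gamma$ if and only if $g(v)\in\Gamma$. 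Combining this with the previous paragraph: $B_v$ is $7$-exceptional if and only if $g(v)\in\Gamma$.

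On the other hand, by \eqref{cjdef} with $j=0$ there is a $w\in\Q$ with $(v,w)\in C_0(\Q)$ precisely when $g(v)\in(\Q^{\times})^7$. Since $(\Q^{\times})^7\subseteq\Gamma$, the implication (ii)$\Rightarrow$(i) is immediate, so the real content is (i)$\Rightarrow$(ii). Assume $g(v)=7^aw^7$ with $a\in\Z$ and $w\in\Q^{\times}$. Then $a\equiv\ord_7(g(v))\pmod{7}$, while Lemma \ref{tablem}(a,b) shows $\ord_7(g(v))=\ord_7(f_1(v))-\ord_7(f_2(v))\in\{-1,0,1\}$. If $\ord_7(g(v))=0$ then $7\mid a$, so $g(v)\in(\Q^{\times})^7$ and (ii) holds. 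If $\ord_7(g(v))=1$ then $a+6\equiv0\pmod{7}$, so $7^6g(v)=7^{a+6}w^7$ is a seventh power, say $7^6g(v)=W^7$; then $(v,W)\in C_6(\Q)$, which is empty by Lemma \ref{cjempty}(ii) as $7\nmid6$ — a contradiction. If $\ord_7(g(v))=-1$ the same argument applied to $7g(v)$ produces a point of $C_1(\Q)=\emptyset$, again a contradiction. Hence $\ord_7(g(v))=0$ and (ii) holds.

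The one genuinely subtle point — and the reason the curves $C_1$ and $C_6$ intervene — is the prime $7$ itself: Theorem \ref{Deltacriterion} imposes no condition there, so passing from ``$g(v)$ is a seventh power up to a power of $7$'' to ``$g(v)$ is a seventh power'' requires excluding $\ord_7(g(v))=\pm1$ (which occurs exactly when $v$ is integral at $7$ with $v\equiv3$ or $5\pmod{7}$). That exclusion is exactly what the local computation behind Lemma \ref{cjempty}(ii) provides. The rest is routine valuation bookkeeping together with $\gcd(6,7)=1$.
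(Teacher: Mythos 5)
Your proof is correct and follows essentially the same route as the paper's: apply Theorem \ref{Deltacriterion} (after checking semistability via Proposition \ref{redtype}), pass the discriminant ratio through Corollary \ref{dmbb}(i), observe that raising to the sixth power is harmless modulo $7^{\Z}(\Q^\times)^7$, and eliminate the $C_j$ with $j\not\equiv 0\pmod 7$ via Lemma \ref{cjempty}(ii). The only cosmetic difference is that you invoke Lemma \ref{tablem}(a,b) to pin $\ord_7(g(v))\in\{-1,0,1\}$ and thereby need only $C_1(\Q)=C_6(\Q)=\emptyset$, whereas the paper appeals to the emptiness of all of $C_1,\ldots,C_6$; both are valid.
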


\begin{proof}
By Proposition \ref{redtype}, $B_v$ has multiplicative reduction at all primes 
of bad reduction different from $7$.
Thus we can apply Theorem \ref{Deltacriterion} to $B_v$ with $p=7$ 
to conclude that 
\begin{align*}
\text{Assertion (i)} &\Longleftrightarrow 
   \frac{\Deltamin(B'_v)}{\Deltamin(B_v)} \in 7^\Z \cdot (\Q^{\times})^7 \\
&\Longleftrightarrow \biggl(\frac{v^3-2v^2-v+1}{v^3-v^2-2v+1}\biggr)^6 \in 7^\Z \cdot (\Q^{\times})^7 \\
&\Longleftrightarrow \frac{v^3-2v^2-v+1}{v^3-v^2-2v+1} \in 7^\Z \cdot (\Q^{\times})^7,
\end{align*} 
the middle equivalence by  Corollary \ref{dmbb}(i).
This in turn is equivalent to saying there is a point $(v,w) \in C_j(\Q)$ for some $j$ with $0 \le j \le 6$.  But 
by Lemma \ref{cjempty}(ii), $C_j(\Q)$ is empty if $1 \le j \le 6$.  This proves the lemma.
\end{proof}

\begin{thm} 
\label{goodcriterion}  Suppose that $E$ is an elliptic curve over $\Q$ with a $\Q$-isogeny of degree 7.   
Then the following are equivalent:
\begin{enumerate}
\item  
$E$ is $7$-exceptional;
\item
$E$ is a quadratic twist of $B_v$ for some $(v,w) \in C_0(\Q)$.   
\end{enumerate} 
\end{thm}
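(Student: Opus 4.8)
The plan is to reduce Theorem \ref{goodcriterion} to Proposition \ref{gclem} by a twisting argument, using the fact (noted in \S\ref{introsect}, following \cite{greenberg}) that being $7$-exceptional is invariant under quadratic twist. First I would recall that if $E$ has a $\Q$-isogeny of degree $7$ with character $\psi$, then by Theorem \ref{pthm} (\cite[Theorem 1]{greenberg}), $E$ being $7$-exceptional forces $\psi\varphi^{-1}$ to have order $2$, equivalently $\psi^4=\omega^2$. As explained in \S\ref{image7sect}, this means $\psi\omega^{-5}$ has order dividing $2$, so $\psi = \omega^5$ or $\psi = \omega^5\epsilon$ for a quadratic character $\epsilon$. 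Hence, replacing $E$ by its quadratic twist by $\epsilon$ (which changes neither the property of being $7$-exceptional nor the isomorphism class up to quadratic twist), we may assume $\psi = \omega^5$.

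Next I would invoke Theorem \ref{omega5}(i): since $E$ (after the twist) has a $\Q$-rational subgroup of order $7$ on which $G_\Q$ acts via $\omega^5$, there is a $v\in\Q$ with $E\cong B_v$ over $\Q$. By Proposition \ref{gclem}, $B_v$ is $7$-exceptional if and only if there is a $w\in\Q$ with $(v,w)\in C_0(\Q)$. This gives (i)$\Rightarrow$(ii): if the original $E$ is $7$-exceptional, so is its twist (hence so is $B_v$), so $(v,w)\in C_0(\Q)$ for some $w$, and the original $E$ is a quadratic twist of $B_v$. For the converse (ii)$\Rightarrow$(i): if $E$ is a quadratic twist of $B_v$ with $(v,w)\in C_0(\Q)$, then by Proposition \ref{gclem} $B_v$ is $7$-exceptional, and since being $7$-exceptional is preserved under quadratic twist, $E$ is $7$-exceptional as well.

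The one point requiring a little care is the direction in the reduction where I start from an arbitrary $7$-exceptional $E$: I need to know that the quadratic twist of $E$ that makes $\psi=\omega^5$ exists and that the twist is again $7$-exceptional. The former is immediate because $\psi\omega^{-5}$ is an $\F_7^\times$-valued character of order dividing $2$, hence corresponds to a quadratic (possibly trivial) character $\epsilon$ of $G_\Q$, and twisting $E$ by $\epsilon$ multiplies $\psi$ by $\epsilon$; the latter is the statement from \cite{greenberg} recalled after Definition \ref{excdef} that for $p>2$ a quadratic twist of a $p$-exceptional curve is $p$-exceptional. I expect this bookkeeping — tracking how the isogeny character transforms under quadratic twist and checking that ``quadratic twist of $B_v$'' on the nose matches ``quadratic twist of $E$'' — to be the only mild obstacle; everything substantive has already been done in Theorem \ref{pthm}, Theorem \ref{omega5}, and Proposition \ref{gclem}.

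\begin{proof}
If $E$ is a quadratic twist of some $B_v$ with $(v,w) \in C_0(\Q)$, then $B_v$ is $7$-exceptional by Proposition \ref{gclem}, and hence so is $E$ since (as recalled after Definition \ref{excdef}, see \cite{greenberg}) a quadratic twist of a $7$-exceptional curve is again $7$-exceptional. This proves (ii)$\Rightarrow$(i).

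Conversely, suppose $E$ is $7$-exceptional, and let $\psi$ be the character of its $\Q$-isogeny of degree $7$. By Theorem \ref{pthm}, $\psi\varphi^{-1}$ has order $2$, equivalently $\psi^4 = \omega^2$, so $\psi\omega^{-5}$ has order dividing $2$. Let $\epsilon : G_\Q \to \{\pm 1\}$ be the (possibly trivial) quadratic character with $\psi\omega^{-5} = \epsilon$ (identifying $\{\pm1\} \subset \F_7^\times$), and let $E_1 = E^{(\epsilon)}$ be the quadratic twist of $E$ by $\epsilon$. Then $E_1$ has a $\Q$-isogeny of degree $7$ with character $\psi\epsilon = \omega^5$, and $E_1$ is $7$-exceptional because $E$ is. By Theorem \ref{omega5}(i) there is a $v \in \Q$ with $E_1 \cong B_v$ over $\Q$, so $B_v$ is $7$-exceptional, and by Proposition \ref{gclem} there is a $w \in \Q$ with $(v,w) \in C_0(\Q)$. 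Finally $E$ is the quadratic twist of $E_1 \cong B_v$ by $\epsilon$, so $E$ is a quadratic twist of $B_v$. This proves (i)$\Rightarrow$(ii).
\end{proof}
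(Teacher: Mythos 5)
Your proof is correct and follows essentially the same route as the paper's: apply Theorem \ref{pthm} to force $\psi^4 = \omega^2$, twist by the quadratic character $\epsilon = \psi\omega^{-5}$ (which the paper writes equivalently as $\psi\omega$, since $\omega$ has order $6$) to reduce to $\psi = \omega^5$, invoke Theorem \ref{omega5}(i) to identify the twist with some $B_v$, and finish with Proposition \ref{gclem}. You simply spell out the easy direction (ii)$\Rightarrow$(i) and the twisting bookkeeping a bit more explicitly than the paper does.
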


\begin{proof} 
Suppose (i),
i.e., the image of $\rho_{E,7}$ does not contain a Sylow pro-$7$ subgroup of $\Aut_{\Z_7}(T_7(E))$.  
By Theorem \ref{pthm} (\cite[Theorem 1]{greenberg}), $\psi\varphi^{-1}$ has order 2, so by Lemma \ref{ratio} we have 
$\psi\varphi^{-1} = \omega^3$.
Since $\psi\varphi = \omega$, we have $\psi^2 = \varphi^2 = \omega^4$.  Let $\epsilon = \psi\omega$. 
Then $\epsilon$ is a quadratic 
character, $\psi = \omega^5\epsilon$, and $\varphi = \omega^2\epsilon$.  Replacing $E$ by its 
quadratic twist by $\epsilon$, we may assume that $\psi = \omega^5$ and $\varphi = \omega^2$.
By Theorem \ref{omega5}(i), we have that $E \cong B_v$ for some $v \in \Q$.  
Now the theorem follows from Proposition \ref{gclem}.
\end{proof}

The curve 
$$
C_0 \quad  :  \quad  w^7 = \frac{v^3-2v^2-v+1}{v^3-v^2-2v+1}
$$
 of \eqref{cjdef} has a nonsingular model $C \subset \P \times \P$ 
with coordinates $((v:u),(w:z))$ (that we will abbreviate as $(v,w)$) given by
$$
w^7(v^3-v^2u-2vu^2+u^3) = z^7(v^3-2v^2u-vu^2+u^3),
$$
which has good reduction outside of $7$.  
By Theorem \ref{goodcriterion}, we wish to determine $C(\Q)$.  
One finds easily the following rational points on~$C$.
Let
\begin{gather*}
   P_0 = (0, 1), \quad P_1 = (1, 1), \quad P_\infty = (\infty, 1), \\
   P_2 = (2, -1), \quad P_3 = (\tfrac{1}{2}, -1), \quad P_4 = (-1, -1)
\end{gather*}
and
$$
Z = \{P_0,P_1,P_\infty,P_2,P_3,P_4\} \subseteq C(\Q).
$$

\begin{thm}
\label{final}
$C(\Q) = Z$.
\end{thm}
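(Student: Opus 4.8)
The plan is to determine $C(\Q)$ by the method of Chabauty and Coleman, which applies because $C$ has genus $12$ while the Mordell--Weil rank of its Jacobian $J$ over $\Q$ is at most $6$, strictly less than the genus. Fix a base point $P_0 \in Z$ and the Abel--Jacobi embedding $\iota\colon C \hookto J$, $\iota(Q) = [Q - P_0]$, defined over $\Q$. For a prime $p$ of good reduction, every $\omega \in H^0(C_{\Q_p}, \Omega^1_C)$ whose $p$-adic integral annihilates the closure $\overline{J(\Q)}$ of $J(\Q)$ in $J(\Q_p)$ satisfies $\int_{P_0}^Q \omega = 0$ for all $Q \in C(\Q)$; the $\Q_p$-vector space $V$ of such $\omega$ has dimension at least $12 - \rk J(\Q) \ge 6$. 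Bounding, residue disk by residue disk, the zeros of the Coleman integrals of elements of $V$ (via Newton polygons) then bounds $|C(\Q)|$; the real work lies in knowing enough of $J(\Q)$ to make $V$ explicitly computable.

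First I would pin down the Mordell--Weil group. A descent (carried out in \S\ref{ranksect}) gives $\rk J(\Q) \le 6$. There is an $S_3$-action on $C$, generated by the automorphism $(v,w) \mapsto (1-v, w^{-1})$ (well defined since the cubic factors $f_1, f_2$ of Lemma \ref{tablem} satisfy $f_1(1-v) = -f_2(v)$ and $f_2(1-v) = -f_1(v)$) and an order-$3$ automorphism lifting $\eta$ of \eqref{etadef}; under it $J$ is $\Q$-isogenous to $\Jac(D)^2$ for an explicit genus-$6$ quotient curve $D$, so $\rk J(\Q)$ is even and hence equals $4$ or $6$. The five classes $[P - P_0]$, $P \in Z \setminus \{P_0\}$, span only a rank-$4$ subgroup, so I would exhibit two further independent classes in $J(\Q)$ --- produced from rational points on $D$ (done at the end of \S\ref{ranksect}) --- giving a subgroup $\Gamma \subseteq J(\Q)$ of rank $6$. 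This forces $\rk J(\Q) = 6$, so $\Gamma$ has finite index, $\overline{\Gamma} = \overline{J(\Q)}$, and $V$ becomes computable by evaluating $p$-adic integrals of a basis of $H^0(C_{\Q_p}, \Omega^1_C)$ against the generators of $\Gamma$.

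Next I would run the Chabauty computation at $p = 5$, noting that $C$ has good reduction away from $7$. Since $x \mapsto x^7$ is a bijection of $\F_5$, there is exactly one point of $C$ over each point of $\P(\F_5)$, so $|C(\F_5)| = 6$; moreover the $v$-coordinates of the six points of $Z$ run through $\P(\F_5)$, so reduction gives a bijection $Z \to C(\F_5)$. Hence it suffices to show that each residue disk of $C(\F_5)$ contains at most one rational point, for then reduction embeds $C(\Q)$ into the six-element set $C(\F_5)$, forcing $C(\Q) = Z$. For this I would compute the space $V \subseteq H^0(C_{\Q_5}, \Omega^1_C)$ ($\dim_{\Q_5} V \ge 6$) attached to $\Gamma$, reduce it modulo $5$, and check that in each of the six residue disks some differential in $V$ does not vanish at the reduction point; the Coleman integral of such a differential then has exactly one zero in the disk (the Newton polygon is clean here because $5 > 2$). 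The $S_3$-action permutes $V$ and the six residue disks, with two orbits matching $\{P_0, P_1, P_\infty\}$ and $\{P_2, P_3, P_4\}$ in $Z$, so only two disks need to be examined in detail.

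The main obstacle is the middle step: producing the two extra generators of $J(\Q)$ and thereby proving $\rk J(\Q) = 6$ rather than $4$. The Chabauty bound is conditional on this, because with only the rank-$4$ subgroup spanned by the $[P - P_0]$ in hand one cannot certify that a differential annihilating that subgroup also annihilates $\overline{J(\Q)}$ --- a hypothetical seventh rational point $Q$ need not have $[Q - P_0]$ in it --- so the $p = 5$ (or $p = 2$) argument is not rigorous; indeed a direct $2$-adic Coleman-integral computation would conclude $|C(\Q)| \ne 12$ only under the false hypothesis $\rk J(\Q) = 4$, so the prime-$2$ route alone merely gives $|C(\Q)| \in \{6, 12\}$. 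The search for the extra classes is carried out on the genus-$6$ curve $D$, guided by the computed analytic rank $3$ of $\Jac(D)$; once they are found, the residual $5$-adic power-series and Newton-polygon estimates are a finite, if delicate, verification.
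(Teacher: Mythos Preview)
Your proposal is correct and follows essentially the same route as the paper: bound $\rk J(\Q)\le 6$ by descent, raise the rank to~$6$ by exhibiting extra classes coming from the genus-$6$ quotient~$D$, and then run Chabauty at $p=5$, using the bijection $Z\to C(\F_5)$ and a differential in the reduced annihilator that is nonvanishing at each of the six $\F_5$-points. One small slip: the extra generators in \S\ref{ranksect} come from closed points of degree~$4$ and~$8$ on~$D$ (pulled back to degree~$8$ and~$16$ on~$C$), not from rational points on~$D$; and rather than using the $S_3$-action to reduce to two disks, the paper simply exhibits a single explicit $\omega\in\tilde V$ that does not vanish at any point of $C(\F_5)$.
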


We will prove Theorem \ref{final} using the method of Chabauty, as made explicit 
in \cite{StollChabauty}.  Before that we deduce the following consequence.

\begin{thm}
\label{finalfinal}
If $E$ is an elliptic curve over $\Q$ with a $\Q$-rational subgroup of order $7$,
and $E$ is exceptional for $7$,
then $E$ has CM by $\Q(\sqrt{-7})$, i.e., $j(E) \in \{-15^3,255^3\}$,
i.e., $E$ is a quadratic twist of an elliptic curve 
of conductor $49$.
\end{thm}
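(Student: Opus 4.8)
The plan is to read the answer off from Theorem \ref{goodcriterion} and the (just-stated) Theorem \ref{final}, and then translate the conclusion back from $B_v$ to $E$. First I would take an elliptic curve $E$ over $\Q$ with a $\Q$-rational subgroup of order $7$ that is $7$-exceptional, and apply Theorem \ref{goodcriterion}: this produces a point $(v,w) \in C_0(\Q)$ such that $E$ is a quadratic twist of $B_v$. Then I would invoke Theorem \ref{final}, which says $C(\Q) = Z$; reading off $v$-coordinates, this forces $v \in \{0,1,\infty,2,1/2,-1\}$.

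Next I would use what is already known about these six fibres. By Proposition \ref{redtype}(iii), each such $B_v$ has conductor $49$, and by Remark \ref{49vs} it has complex multiplication by $F = \Q(\sqrt{-7})$: indeed Proposition \ref{isogprop} together with Lemma \ref{etaautlem} (which rests on Lemma \ref{autlem}) yields an isomorphism $B_v/\Psi_v \cong B_v$ over $F$, hence an endomorphism of $B_v$ of degree $7$ defined over $F$. From the formula \eqref{jBveqn} one then reads off $j(B_v) = -15^3$ for $v \in \{0,1,\infty\}$ and $j(B_v) = 255^3$ for $v \in \{2,1/2,-1\}$.

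Finally I would transfer these conclusions to $E$ itself: a quadratic twist of an elliptic curve is isomorphic to it over $\bar\Q$, so it has the same $j$-invariant and the same geometric endomorphism ring. Hence $j(E) = j(B_v) \in \{-15^3,255^3\}$, the curve $E$ has CM by $\Q(\sqrt{-7})$, and since $B_v$ has conductor $49$, $E$ is a quadratic twist of an elliptic curve of conductor $49$. I do not expect any genuine obstacle in this corollary: all the difficulty lies upstream in Theorem \ref{final} (the Chabauty argument), and what remains here is the routine verification that quadratic twisting preserves the $\bar\Q$-isomorphism class, together with the bookkeeping matching the six values of $v$ to the two CM $j$-invariants attached to $\Q(\sqrt{-7})$.
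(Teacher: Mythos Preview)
Your proposal is correct and follows essentially the same route as the paper: apply Theorem~\ref{goodcriterion}, then Theorem~\ref{final}, then identify the six fibres $B_v$. The only cosmetic difference is that the paper identifies $B_v$ for $v\in\{0,1,\infty\}$ and $v\in\{2,1/2,-1\}$ as the curves 49A1 and 49A2 in Cremona's tables, whereas you use Proposition~\ref{redtype}(iii), Remark~\ref{49vs}, and \eqref{jBveqn} to reach the same conclusion from within the paper.
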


\begin{proof}
By Theorem \ref{goodcriterion}, $E$ is 
a quadratic twist of $B_v$ for some $(v,w) \in C(\Q)$.  
By Theorem \ref{final},  $(v,w) \in Z$. 
If $v \in \{0,1,\infty\}$,  then 
$B_v$ is isomorphic to the curve 49A1 in Cremona's tables \cite{cremona}.  
If $v \in \{2,1/2,-1\}$,  then $B_v$ is isomorphic to the curve 49A2.  
\end{proof}

\section{The rank of $J(\Q)$}
\label{ranksect}
Let $J$ be the Jacobian of $C$. The first step in bounding $C(\Q)$ is to compute the 
rank of $J(\Q)$.  In this section we prove that the rank is $6$.
We first prove that $6$ is an upper bound, 
following the method described by Poonen and Schaefer in \cite{PooSch}.
To keep our notation as close as possible to theirs, we replace $C$ by the 
(birationally) isomorphic curve
$$
X : y^7 = (x^3-2x^2-x+1)(x^3-x^2-2x+1)^6.
$$

Let $\zeta$ be a primitive $7$-th root of unity, $k = \Q(\zeta)$, 
$\O = \Z[\zeta]$, 
and $\pi = \zeta-1 \in \O$, a generator of the prime ideal of $\O$ above $7$.  
We identify $\O$ with a subring of $\End_k(J)$ by sending $\zeta$ to the automorphism of $J$ 
induced by the automorphism $(x,y) \mapsto (x,\zeta y)$ of $X$.  We will use \cite{PooSch} 
to compute an upper bound for the size of $J(k)/\pi J(k)$. 

Define 
\begin{align*}
f(x) &= (x^3-2x^2-x+1)(x^3-x^2-2x+1)^6, \\
f_0(x) &= (x^3-2x^2-x+1)(x^3-x^2-2x+1).
\end{align*}
A calculation in PARI/GP shows that the roots of $x^3-2x^2-x+1$ are 
$\alpha_i := 1+\zeta^i+\zeta^{-i} \in k$ for $1 \le i \le 3$, and the roots of 
$x^3-x^2-2x+1$ are $\alpha_i := -\zeta^i-\zeta^{-i} \in k$ for $4 \le i \le 6$.  
In particular, $f$ and $f_0$ factor into linear factors in $k[x]$.  

Suppose $K$ is a field containing $k$.  Let $\Div(X/K)$ denote the group of 
$K$-rational divisors on $X$, i.e., the group of $\Z$-linear combinations of 
points in $X(\bar{K})$ that are fixed by $G_K$, let $\Div ^0(X/K)$ denote the subgroup 
of divisors of degree zero, and let $\Pic^0(X/K) = \Div^0(X/K)/P(X/K)$
where $P(X/K)$ is the group of divisors of $K$-rational functions on $X$
(i.e., the principal divisors).  Since $X(k)$ is nonempty, 
there is a natural isomorphism $\Pic^0(X/K) \cong J(K)$, and we will identify these 
two groups.

If $R$ is a (multiplicative) abelian group, let
$$
V(R) := (R/R^7)^6/(R/R^7)
$$
where $R^7$ denotes seventh powers in $R$,
and $R/R^7$ is embedded diagonally in the direct product $(R/R^7)^6$.

In \cite[\S5]{PooSch}, Poonen and Schaefer define what they call the ``$(x-T)$ map'' for every 
field $K$ containing $k$:
$$
(x-T)_K : J(K)/\pi J(K) \too V(K^\times).
$$
This map is characterized as follows.
If $D = \sum_P n_P P \in \Div^0(X)$ is supported on points 
$P \in X(\bar{K})$ with $x$-coordinate 
$x(P) \notin \{\alpha_i : 1 \le i \le 6\} \cup \{\infty\}$, then 
$$
(x-T)_K(D) := \prod_P((x(P)-\alpha_1)^{n_P},\ldots,(x(P)-\alpha_6)^{n_P}).
$$

\begin{lem}[Poonen-Schaefer \cite{PooSch}]
\label{xmTpt}
Suppose $K$ is a field containing $k$, and $P = (x(P),y(P)) \in X(K)$.  Let $\infty$ 
denote the rational point with $x = \infty$, i.e., the point corresponding to $(\infty, 1)$ 
on the nonsingular model $C$ of $X$.  
If $x(P) \notin \{\alpha_i : 1 \le i \le 6\} \cup \{\infty\}$ then 
$$
(x-T)_K(P-\infty) = (x(P)-\alpha_1, \ldots, x(P)-\alpha_6).
$$
\end{lem}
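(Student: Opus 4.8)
The plan is to reduce the statement to the characterization of $(x-T)_K$ already recorded above; the only thing in the way is that the divisor $P-\infty$ is supported at a point over $x=\infty$, which lies outside the range of that characterization. Let $Q_i$ be the unique point of $X$ above $x=\alpha_i$ (unique because each $\alpha_i$ is a totally ramified branch point of the degree-$7$ map $x\colon X\to\P$), and let $\infty_1,\dots,\infty_7$ be the seven points of $X$ above $x=\infty$, one of which is $\infty$; there are exactly seven, and they are $K$-rational, because $x=\infty$ is unramified in the cover (as $7\mid\deg f=21$). First I would enlarge the domain of the product formula to all degree-zero $K$-divisors $D=\sum_R n_R R$ whose support avoids $\{Q_1,\dots,Q_6\}$ but may meet $\{\infty_1,\dots,\infty_7\}$, by the rule that each point over $x=\infty$ contributes the trivial factor $(1,\dots,1)$ and each other point $R$ contributes $((x(R)-\alpha_1)^{n_R},\dots,(x(R)-\alpha_6)^{n_R})$; call the resulting element of $(K^\times)^6$ the value of $\Phi$ at $D$. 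Granting that the class of $\Phi(D)$ in $V(K^\times)$ depends only on the class of $D$ in $\Pic^0(X/K)=J(K)$ (the identification being valid because $X(K)\neq\emptyset$), and that $\Phi$ agrees with the given formula on divisors supported off all of $\{Q_i\}\cup\{\infty_j\}$, the induced homomorphism $J(K)\to V(K^\times)$ must coincide with $(x-T)_K$ (pulled back along $J(K)\to J(K)/\pi J(K)$), since every class of $J(K)$ is represented by some divisor avoiding the finite set $\{Q_i\}\cup\{\infty_j\}$. Applying this to $D=P-\infty$, which is supported on $\{P,\infty\}$ and hence off $\{Q_1,\dots,Q_6\}$, then yields exactly $(x(P)-\alpha_1,\dots,x(P)-\alpha_6)$, as claimed.

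The content is the well-definedness of $\Phi$ modulo seventh powers and the diagonal. Given $h\in K(X)^\times$ with $\mathrm{div}(h)=D$ supported off $\{Q_1,\dots,Q_6\}$, I would apply Weil reciprocity to the pair $h$ and $x-\alpha_i$, using $\mathrm{div}(x-\alpha_i)=7Q_i-\sum_{j=1}^7\infty_j$. Assembling the tame symbols: the contribution at $Q_i$ is $h(Q_i)^7$, a seventh power (here $Q_i\notin\mathrm{supp}(h)$, so $h(Q_i)$ is a well-defined nonzero value); the contributions at the finite points of $\mathrm{supp}(h)$ multiply to the inverse of the $i$-th coordinate of $\Phi(D)$; and the contributions at the points $\infty_j$ multiply to an element $\lambda\in K^\times$ that is independent of $i$, because $x-\alpha_i$ and $x-\alpha_{i'}$ have the same principal part at each $\infty_j$. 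Weil reciprocity then reads $\Phi(D)_i=h(Q_i)^7\cdot\lambda$ for each $i$, so $\Phi(D)=(h(Q_1)^7,\dots,h(Q_6)^7)\cdot(\lambda,\dots,\lambda)$, which is trivial in $V(K^\times)$.

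I expect the one genuinely delicate point to be the bookkeeping of tame symbols at the points $\infty_j$, where $h$ and $x-\alpha_i$ both have nonzero order, and in particular the verification that the resulting factor is the same for all $i$ and lies in $K^\times$ — for the latter one fixes a uniformizer at each $\infty_j$ defined over $K$, which is possible since these points are $K$-rational. The remaining ingredients are routine: the moving lemma gives every class of $J(K)$ a representative supported off $\{Q_1,\dots,Q_6\}$; $\Pic^0(X/K)\cong J(K)$ because $X$ has a $K$-rational point; and the agreement of $\Phi$ with the given formula on divisors supported off $\{Q_i\}\cup\{\infty_j\}$ is immediate. (The statement is in fact a special case of the discussion in \cite{PooSch}, from which it could simply be quoted.)
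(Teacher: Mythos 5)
Your argument is correct, and, as you yourself note at the end, it is in substance a re-derivation of the relevant part of Proposition~5.1 of Poonen--Schaefer; the paper's own proof is the single line ``This follows from \cite[Proposition 5.1]{PooSch}.'' The Weil-reciprocity computation you sketch---pairing $h$ against $x-\alpha_i$, observing that the $x-\alpha_i$ all share a common principal part at each of the seven unramified $K$-rational points over $x=\infty$ (unramified because $7\mid\deg f=21$), and concluding that the product of tame symbols there is a scalar independent of $i$ so that $\Phi(\operatorname{div} h)$ lies in $(K^\times)^7\cdot(\text{diagonal})$---is exactly the standard argument underlying that proposition, so you have made explicit what the paper outsources to the reference rather than taking a genuinely different route.
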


\begin{proof}
This follows from  \cite[Proposition 5.1]{PooSch}.
\end{proof}

There is a natural 
localization map from $V(k^\times)$ to $V(k_\pi^\times)$,
where $k_\pi$ is the completion of $k$ at $\pi$.
Let $N$ be the 
``weighted norm" map  from \S 6 of \cite{PooSch}:
$$
N : V(k^\times) \to k^\times/(k^\times)^7, \quad (z_1,z_2,z_3,z_4,z_5,z_6) \mapsto  z_1z_2z_3(z_4z_5z_6)^6.
$$

\begin{thm}[Poonen-Schaefer \cite{PooSch}]
\label{PS}
In the commutative diagram
$$
\xymatrix@C=40pt{
~J(k)/\pi J(k)~ \ar^-{(x-T)_k}[r]\ar[d] & V(k^\times) \ar^{\loc_\pi}[d]~ \\
~J(k_\pi)/\pi J(k_\pi)~ \ar^-{(x-T)_{k_\pi}}[r] & ~V(k_\pi^\times)~
}
$$
the maps $(x-T)_k$ and $(x-T)_{k_\pi}$ are injective, and the image of $(x-T)_k$ is contained in
$$
V(\O[1/\pi]^\times) \;\cap\; \ker(N) \;\cap\; \loc_{\pi}^{-1}(\im((x-T)_{k_\pi})).
$$
\end{thm}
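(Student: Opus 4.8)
The plan is to derive the statement from the general descent machinery of Poonen and Schaefer \cite{PooSch}, applied to the curve $X : y^7 = f(x)$ with $f(x) = (x^3-2x^2-x+1)(x^3-x^2-2x+1)^6$, together with two computations specific to this $f$ (one producing the norm condition, one producing the integrality condition). First I would recall the interpretation of the map: by \cite{PooSch}, for a field $K \supseteq k$ the homomorphism $(x-T)_K$ is, under the canonical identifications, the composite of the Kummer connecting map $J(K)/\pi J(K) \hookto H^1(K, J[\pi])$ with a homomorphism $H^1(K, J[\pi]) \to V(K^\times)$ built from the explicit $G_K$-module structure of $J[\pi]$, which is available here because $f$ splits completely over $k$, the roots being the $\alpha_i \in \O$ recorded just before the theorem. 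Injectivity of $(x-T)_k$ and of $(x-T)_{k_\pi}$ is then the corresponding statement of \cite{PooSch}, once one checks that $X$ satisfies the running hypotheses of that paper: the model $y^7 = f(x)$ has the required shape, $f$ has all its roots in $k$, and the nonsingular model $C$ carries the rational point $\infty$ above $x = \infty$ used to split off the degree. Commutativity of the square is functoriality of the connecting map and of the auxiliary $H^1$-map under restriction along $k \hookto k_\pi$, and the containment of $\im((x-T)_k)$ in $\loc_\pi^{-1}(\im((x-T)_{k_\pi}))$ then follows formally.

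Next I would establish the norm condition. Using Lemma \ref{xmTpt} and the definition of the weighted norm $N$, for $P = (x(P),y(P)) \in X(K)$ with $x(P) \notin \{\alpha_1,\dots,\alpha_6,\infty\}$ one gets
\[
N\big((x-T)_K(P-\infty)\big) = \prod_{i=1}^{3}(x(P)-\alpha_i)\cdot\prod_{i=4}^{6}(x(P)-\alpha_i)^6 = f(x(P)) = y(P)^7,
\]
using that $\alpha_1,\alpha_2,\alpha_3$ are the roots of $x^3-2x^2-x+1$ and $\alpha_4,\alpha_5,\alpha_6$ the roots of $x^3-x^2-2x+1$; this is trivial in $k^\times/(k^\times)^7$. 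Since $(x-T)_k$ and $N$ are homomorphisms and (because $k$ is infinite) every class in $J(k) = \Pic^0(X/k)$ has a $k$-rational representative supported away from the finite set $\{\alpha_1,\dots,\alpha_6\} \cup \{\infty\}$ by a standard moving lemma, it follows that $N \circ (x-T)_k$ is trivial, i.e. $\im((x-T)_k) \subseteq \ker(N)$.

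Then I would turn to the integrality condition, invoking the part of \cite{PooSch} that identifies the image of the descent map with a subgroup of $V$ of the $S$-units of $k$, where $S$ consists of the places of bad reduction of $X$ together with those above $\pi$. Here $C$ has good reduction away from $7$, and $7$ is totally ramified in $k$ with $(7) = (\pi)^6$, so $S = \{\pi\}$ and the $S$-units are $\O[1/\pi]^\times$; concretely, the assertion that the factors $x(P) - \alpha_i$ of Lemma \ref{xmTpt} are, modulo seventh powers, supported only at $\pi$ reduces to the directly checkable facts that $\mathrm{disc}(f_0)$ and the resultant of $x^3-2x^2-x+1$ and $x^3-x^2-2x+1$ are, up to units, powers of $7$, again combined with the moving argument. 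Putting the three inclusions together proves the theorem.

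The main obstacle I anticipate is not a single deep point but the reduction of the two computations above from the well-understood classes $[P - \infty]$ of Lemma \ref{xmTpt} to arbitrary classes in $J(k)/\pi J(k)$ and $J(k_\pi)/\pi J(k_\pi)$: this needs either the general formula for $(x-T)$ on divisors from \cite[\S5]{PooSch} or a careful moving-lemma-plus-continuity argument, and one must be precise about the good-reduction and unramified-outside-$\pi$ input that feeds the integrality claim.
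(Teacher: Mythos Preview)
Your proposal is correct and follows essentially the same approach as the paper: both derive the theorem directly from the general results of Poonen--Schaefer, citing injectivity, the norm condition, and the integrality condition from \cite{PooSch} once one checks that $f$ splits over $k$, that $X$ has a $k$-rational point, and that $J$ has good reduction outside $7$. The only difference is that you unpack the norm condition via the explicit computation $N((x-T)(P-\infty)) = f(x(P)) = y(P)^7$ plus a moving argument, whereas the paper simply cites \cite[Proposition 12.1]{PooSch}; this is the same content made explicit.
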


\begin{proof}
That the maps are injective follows from \cite[Theorem 11.3]{PooSch}, since $X$ has 
$k$-rational points and $f(x)$ factors into linear factors in $k[x]$.

Let $U$ denote the image of $(x-T)_k$.  Then $U \subseteq V(\O[1/\pi]^\times)$ by \cite[Proposition 12.4]{PooSch}  
since $J$ has good reduction outside of $7$, and $U \subseteq\ker(N)$ by \cite[Proposition 12.1]{PooSch}.
The commutativity of the diagram shows that $U \subseteq \loc_{\pi}^{-1}(\im((x-T)_{k_\pi}))$. 
\end{proof}

\begin{lem}[Poonen-Schaefer \cite{PooSch}]
\label{pslem}
We have
\begin{enumerate}
\item
$\dim_{\F_7}J(k)[\pi] = 4$,
\item
$\dim_{\F_7}J(k_\pi)/\pi J(k_\pi) = 16$.
\end{enumerate}
\end{lem}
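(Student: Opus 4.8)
The plan is to obtain both parts from the $\O$-module structure on $J$, along the lines of the general results in \cite{PooSch}. Since $7$ is totally ramified in $\O=\Z[\zeta]$, with $(7)=(\pi)^6$, the ring $\O_\pi:=\O\otimes_{\Z}\Z_7$ is a complete discrete valuation ring with uniformizer $\pi$ and residue field $\F_7$. The Tate module $T_7(J)$ is $\Z_7$-free of rank $2\dim J=24$ and carries a commuting $\O_\pi$-action, hence is $\O_\pi$-free of rank $24/[\O_\pi:\Z_7]=4$; it follows that $J[\pi](\bar{k})$ has $\F_7$-dimension $4$. So to prove (i) it suffices to show that $G_k$ acts trivially on $J[\pi]$, i.e.\ to exhibit $4$ independent $k$-rational points in $J[\pi]$.

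I would do this on the smooth model $C$ of $X$. Let $\gamma\colon(x,y)\mapsto(x,\zeta y)$ be the automorphism realizing $\zeta\in\O$, and let $R_1,\dots,R_6$ be the points of $C$ lying over $x=\alpha_1,\dots,\alpha_6$. Since the degree-$7$ cyclic cover $C\to\BP^1$ given by $x$ is totally ramified over each $\alpha_i$ (and unramified over $x=\infty$), the $R_i$ are exactly the fixed points of $\gamma$, and they are $6$ distinct $k$-rational points. Hence $(\zeta-1)[R_i-R_1]=0$, so $[R_i-R_1]\in J(k)[\pi]$ for all $i$. The function $w$ on $C$ with $w^7=(x^3-2x^2-x+1)/(x^3-x^2-2x+1)$ has divisor $R_1+R_2+R_3-R_4-R_5-R_6$. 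I claim that modulo $7$ every principal divisor on $C$ supported on $\{R_1,\dots,R_6\}$ is a multiple of $\mathrm{div}(w)$: if $g\in k(C)^\times$ has divisor supported on the fixed-point set of $\gamma$, then $\gamma^{*}g/g$ is a constant, necessarily a $7$-th root of unity, so $g\,w^{j}\in k(x)$ for a suitable $j\in\Z/7\Z$; and a rational function of $x$ whose divisor on $C$ is supported on the totally ramified points $R_i$ has all multiplicities divisible by $7$. Therefore the classes $[R_i-R_1]$ span a subgroup of $J[\pi]$ of order $7^{5-1}=7^4$, which by the dimension count above is all of $J[\pi]$. Thus $J(k)[\pi]=J[\pi]$ has dimension $4$, proving (i).

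For (ii) I would analyze $J(k_\pi)$ locally. Pick a kernel-of-reduction subgroup $M\subseteq J(k_\pi)$ (coming from the identity component of the N\'eron model) lying deep enough in the formal-group filtration to be torsion-free; the formal logarithm identifies $M$ with $\O_{k_\pi}^{\dim J}$ as a $\Z_7$-module, so $M$ is a torsion-free finitely generated module over the discrete valuation ring $\O_\pi$, hence $\O_\pi$-free, necessarily of rank $\dim J=12$, and $M/\pi M\cong\F_7^{12}$. Applying the snake lemma to $0\to M\to J(k_\pi)\to F\to 0$ with $F$ finite, and using $\#(F/\pi F)=\#F[\pi]$, gives
\[
  \#\bigl(J(k_\pi)/\pi J(k_\pi)\bigr)=\#J(k_\pi)[\pi]\cdot\#\bigl(M/\pi M\bigr)=7^{\dim_{\F_7}J(k_\pi)[\pi]}\cdot 7^{12}.
\]
By part (i), $J[\pi]\subseteq J(k)\subseteq J(k_\pi)$, so $\dim_{\F_7}J(k_\pi)[\pi]=4$, and therefore $\dim_{\F_7}J(k_\pi)/\pi J(k_\pi)=4+12=16$, proving (ii).

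The main obstacle is the rationality statement in (i). The $\O_\pi$-module count gives $\dim J[\pi]=4$ cheaply, but one must then account for all the $k$-rational $\pi$-torsion, and here it is essential to work with the honest smooth model $C$ rather than the affine model $y^7=f(x)$ (whose point at infinity is singular), to locate the six $\gamma$-fixed points on it, and to verify carefully that $\mathrm{div}(w)$ supplies the only relation modulo $7$ among the $[R_i-R_1]$. The remaining ingredients — the $\O_\pi$-freeness of $T_7(J)$ and the formal-group computation — are routine; indeed, both (i) and (ii) are instances of the general machinery of \cite{PooSch}.
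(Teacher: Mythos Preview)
Your proof is correct and follows the same mathematical route as the paper, which simply cites \cite[Lemmas 12.9 and 12.10]{PooSch}; you have essentially unpacked those lemmas in this specific case. The explicit generation of $J(k)[\pi]$ by the classes $[R_i-R_1]$ of the six branch points, together with the single relation from $\mathrm{div}(w)$, is exactly the content of Lemma~12.9 (given that $f$ splits into six distinct linear factors over~$k$), and your formal-group/snake-lemma computation for~(ii) is the content of Lemma~12.10.
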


\begin{proof}
Assertion (i) is \cite[Lemma 12.9]{PooSch}, since $f(x)$ factors into linear factors in $k[x]$, 
of which $6$ are distinct.

Similarly, \cite[Lemma 12.9]{PooSch} shows that $\dim_{\F_7}J(k_\pi)[\pi] = 4$, 
and then \cite[Lemma 12.10]{PooSch} shows that 
$$
\dim_{\F_7}J(k_\pi)/\pi J(k_\pi) = g + \dim_{\F_7}J(k_\pi)[\pi] = 12 + 4 = 16,
$$
where $g = 12$ is the genus of $X$. 
\end{proof}

\begin{rem}
\label{SigmaDefrem}
We  observe that $C$ has an action of the group $\Sigma \cong S_3$
generated by the two involutions
\[   (v, w) \longmapsto (v^{-1}, w^{-1}) \quad\text{and}\quad
     (v, w) \longmapsto (1-v, w^{-1}).
\]
The group $\Sigma$ has the two orbits $\{P_0, P_1, P_\infty\}$ 
and $\{P_2, P_3, P_4\}$ on the
set $Z$ of known rational points.
\end{rem}

\begin{prop}
\label{rkO}
$\rk_\O J(k) \le 6$.
\end{prop}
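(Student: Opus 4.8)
I will bound $\rk_\O J(k)$ by bounding $\dim_{\F_7} J(k)/\pi J(k)$ and then translating that into a rank bound using the $\O$-module structure and Lemma~\ref{pslem}(i). Since $\pi$ generates the unique prime of $\O$ above $7$ and $\O/\pi\O \cong \F_7$, the Mordell--Weil group $J(k)$ is a finitely generated $\O$-module, so $J(k) \cong \O^r \oplus (\text{torsion})$ as $\O$-modules where $r = \rk_\O J(k)$. Tensoring with $\O/\pi\O$ gives
\[
  \dim_{\F_7} J(k)/\pi J(k) = r + \dim_{\F_7} J(k)[\pi],
\]
and by Lemma~\ref{pslem}(i) the last term is $4$. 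So it suffices to show $\dim_{\F_7} J(k)/\pi J(k) \le 10$.

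**Main step.** By Theorem~\ref{PS}, the map $(x-T)_k$ embeds $J(k)/\pi J(k)$ into
\[
  W := V(\O[1/\pi]^\times) \;\cap\; \ker(N) \;\cap\; \loc_\pi^{-1}\bigl(\im((x-T)_{k_\pi})\bigr),
\]
so I need $\dim_{\F_7} W \le 10$. I would compute this dimension in three stages. First, compute $\dim_{\F_7} V(\O[1/\pi]^\times)$: since $V(R) = (R/R^7)^6/(R/R^7)$, this is $6\dim_{\F_7}(\O[1/\pi]^\times/(\O[1/\pi]^\times)^7) - \dim_{\F_7}(\O[1/\pi]^\times/(\cdots)^7)$. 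The group $\O[1/\pi]^\times$ is generated by the units of $\O$ together with $\pi$; by Dirichlet's unit theorem $\O^\times$ has rank $2$ (since $k = \Q(\zeta_7)$ has $3$ complex places), plus the torsion $\bmu_{14}$, so $\O^\times/(\O^\times)^7$ has $\F_7$-dimension $2 + 1 = 3$, and adjoining $\pi$ gives dimension $4$ for $\O[1/\pi]^\times/(\O[1/\pi]^\times)^7$. Hence $\dim_{\F_7} V(\O[1/\pi]^\times) = 6\cdot 4 - 4 = 20$. Second, intersecting with $\ker(N)$: the norm map $N : V(k^\times) \to k^\times/(k^\times)^7$ restricted to $V(\O[1/\pi]^\times)$ lands in $\O[1/\pi]^\times/(\cdots)^7$ (dimension $4$), and I would check it is surjective onto that space, cutting the dimension down to $20 - 4 = 16$. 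Third, intersect with the preimage of $\im((x-T)_{k_\pi})$: by Theorem~\ref{PS} and Lemma~\ref{pslem}(ii), $\im((x-T)_{k_\pi}) \cong J(k_\pi)/\pi J(k_\pi)$ has dimension $16$ inside $V(k_\pi^\times)$, and I would need to compute $\dim_{\F_7} V(k_\pi^\times)$ and the rank of the composite $W' := V(\O[1/\pi]^\times) \cap \ker(N) \xrightarrow{\loc_\pi} V(k_\pi^\times) \to V(k_\pi^\times)/\im((x-T)_{k_\pi})$; the dimension of $W$ is $16 - \rk(\text{that composite})$, and I expect this composite to have rank exactly $6$, yielding $\dim_{\F_7} W = 10$ as required.

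**The obstacle.** The delicate part is the final local computation: determining $\dim_{\F_7} V(k_\pi^\times)$ and pinning down the image of $(x-T)_{k_\pi}$ precisely enough to see that $\loc_\pi$ on $V(\O[1/\pi]^\times)\cap\ker(N)$ surjects onto a subspace of $V(k_\pi^\times)/\im((x-T)_{k_\pi})$ of dimension exactly $6$ (and not less, which would only help, nor more, which would break the bound). Since $k_\pi/\Q_7$ is totally ramified of degree $6$ with $7 \mid e$, the structure of $k_\pi^\times/(k_\pi^\times)^7$ is larger than in the tame case and requires care; $\dim_{\F_7}(k_\pi^\times/(k_\pi^\times)^7) = [k_\pi:\Q_7] + 1 + \dim_{\F_7}\bmu_7(k_\pi) = 6 + 1 + 1 = 8$, so $\dim_{\F_7} V(k_\pi^\times) = 6\cdot 8 - 8 = 40$, and I would carry out the matching between the global and local computations following the recipe in \cite[\S12--13]{PooSch}, most likely with explicit generators and a linear-algebra computation in PARI/GP or Magma. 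Once $\dim_{\F_7} W \le 10$ is established, Proposition~\ref{rkO} follows immediately from the displayed formula above.
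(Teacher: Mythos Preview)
Your proposal is correct and follows essentially the same approach as the paper: bound $\dim_{\F_7} J(k)/\pi J(k)$ via the Poonen--Schaefer Selmer set $V(\O[1/\pi]^\times)\cap\ker(N)\cap\loc_\pi^{-1}(\im((x-T)_{k_\pi}))$, then subtract $\dim_{\F_7} J(k)[\pi]=4$. The paper carries out exactly the explicit computation you flag as the obstacle --- it exhibits six points $Q_i\in X(k_\pi)$ whose images (together with their Galois conjugates) under $(x-T)_{k_\pi}$ span a $16$-dimensional subspace of $V(k_\pi^\times)$, then does the linear algebra in PARI/GP to get the intersection dimension $10$ --- so your dimension counts ($4$, $20$, $16$, $8$, $40$) and your expectation that the local condition cuts $6$ more are all on the mark.
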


\begin{proof}
We will use Theorem \ref{PS} to bound the $\O$-rank of $J(k)$.  All of the terms in 
Theorem \ref{PS} are $\F_7$-vector spaces, and we need to compute them explicitly.

It follows from Theorem 5.1 of Chapter 3 of \cite{langcycl},
and the fact that $\Q(\zeta + \zeta^{-1})$ has class number one,
that $\O[1/\pi]^\times$ is generated by the roots of unity, 
the cyclotomic units, and $\pi$.  
Thus an $\F_7$-basis of $\O[1/\pi]^\times/(\O[1/\pi]^\times)^7$ is given by 
$\{\zeta, 1+\zeta, 1+\zeta+\zeta^2, \pi\}$.

We need to compute the image of $(x-T)_{k_\pi}$.  
By Theorem \ref{PS} and Lemma \ref{pslem}(ii), 
$\dim_{\F_7}(\im((x-T)_{k_\pi})) = 16$.  Using PARI/GP, we find  
points $Q_i=(x_i,y_i) \in X(k_\pi)$ for $1\le i\le 6$ with $x$-coordinates:
\begin{align*}
x_1 &= 0, \quad x_2 = -1, \\
x_3 &= 3+4\pi^2+5\pi^3+\pi^4+4\pi^5+2\pi^6+6\pi^7+5\pi^8+5\pi^9+5\pi^{10}, \\
x_4 &= 3+\pi^2+5\pi^3+5\pi^4+5\pi^5+5\pi^6+2\pi^8+5\pi^9+\pi^{10}, \\
x_5 &= 3+\pi^2+2\pi^4+4\pi^5+2\pi^6+\pi^7+2\pi^8, \\
x_6 &= 3+2\pi^2+5\pi^3+\pi^4+6\pi^7+2\pi^8+6\pi^9+2\pi^{10} \\
    & \hskip1.5in + 5\pi^{11}+4\pi^{12}+2\pi^{14}+2\pi^{15}+6\pi^{16}+\pi^{17}.
\end{align*}
Using PARI/GP and Lemma \ref{xmTpt}, we compute $(x-T)_{k_\pi}(\sigma(Q_i)-\infty)$ 
for $1 \le i \le 6$ and for all $\sigma \in \Sigma$, and we find that those values generate an 
$\F_7$-subspace of $V(k_\pi^\times)$ of dimension $16$.  
(We work inside the $\F_7$-vector space $k_\pi^\times/(k_\pi^\times)^7$, using the basis 
$$
\{\pi,1+\pi,1+\pi^2,1+\pi^3,1+\pi^4,1+\pi^5,1+\pi^6,1+\pi^7\}.)
$$ 
It follows that we have found the full image of $(x-T)_{k_\pi}$.

Using the above information, a linear algebra computation in PARI/GP now shows that 
$$
\dim_{\F_7} (V(\O[1/\pi]^\times) \cap \ker(N) \cap \loc_{\pi}^{-1}(\im((x-T)_{k_\pi}))) = 10.
$$
Therefore by Theorem \ref{PS} we have $\dim_{\F_7}J(k)/\pi J(k) \le 10$.  
Since 
$$
\dim_{\F_7}J(k)/\pi J(k) = \rk_\O J(k) + \dim_{\F_7}J(k)[\pi], 
$$
and $\dim_{\F_7}J(k)[\pi] = 4$ by Lemma \ref{pslem}(i), we conclude that $\rk_\O J(k) \le 6$.
\end{proof}

\begin{lem}
\label{rkQlem}
$\rk_\Z J(\Q) \le 6$.
\end{lem}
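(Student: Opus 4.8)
The plan is to show that $\rk_\Z J(\Q)$ in fact equals $\rk_\O J(k)$, so that the bound of Proposition \ref{rkO} transfers at once; the naive inequality $\rk_\Z J(\Q)\le\rk_\Z J(k)=6\cdot\rk_\O J(k)$ is far too weak to be useful. The mechanism is Galois descent applied to the $k$-vector space structure on $W:=J(k)\otimes_\Z\Q$ coming from $\O\otimes_\Z\Q=k$; note that $\dim_k W=\rk_\O J(k)$.

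First I would record how $\Delta:=\Gal(k/\Q)$ interacts with the $\O$-action. Since $J$ (and $X$) are defined over $\Q$, the group $\Delta$ acts on $J(k)$ with $J(k)^\Delta=J(\Q)$; but the embedding $\O\hookrightarrow\End_k(J)$ is only defined over $k$, not over $\Q$. Concretely, $[\zeta]$ is induced by the automorphism $(x,y)\mapsto(x,\zeta y)$ of $X$, whose only irrational coefficient is $\zeta$, so for $\delta\in\Delta$ one has $\delta\circ[\zeta]\circ\delta^{-1}=[\delta(\zeta)]$, and hence $\delta\circ[\lambda]\circ\delta^{-1}=[\delta(\lambda)]$ for every $\lambda\in\O$. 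On $W$ this reads $\delta(\lambda\cdot P)=\delta(\lambda)\cdot\delta(P)$ for $\lambda\in k$ and $P\in W$; in other words $\Delta$ acts $k$-semilinearly on $W$.

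Then I would invoke Galois descent (Speiser's theorem): a finite-dimensional $k$-vector space carrying a semilinear action of $\Delta=\Gal(k/\Q)$ is induced from its invariants, so $\dim_\Q W^\Delta=\dim_k W=\rk_\O J(k)$. Since $\Delta$ is finite, taking invariants commutes with $\otimes_\Z\Q$, whence $W^\Delta=J(k)^\Delta\otimes_\Z\Q=J(\Q)\otimes_\Z\Q$, a $\Q$-vector space of dimension $\rk_\Z J(\Q)$. Combining the two equalities gives $\rk_\Z J(\Q)=\rk_\O J(k)$, which is at most $6$ by Proposition \ref{rkO}. There is no genuinely hard step here; the one point to handle with care is the semilinearity — in particular, not to assume that the $\O$-action commutes with $G_\Q$ (it does not), but rather to keep track of the twist $[\zeta]\mapsto[\delta(\zeta)]$ that $\Delta$ induces on the endomorphisms.
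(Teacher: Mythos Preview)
Your argument is correct and is essentially the content of \cite[Lemma 13.4]{PooSch}, which is what the paper invokes (together with Proposition \ref{rkO}) for its one-line proof. You have simply unpacked that citation: the semilinearity of the $\Gal(k/\Q)$-action on $J(k)\otimes_\Z\Q$ and Galois descent give $\rk_\Z J(\Q)=\rk_\O J(k)$, exactly as in Poonen--Schaefer.
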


\begin{proof}
This follows from Proposition \ref{rkO} and \cite[Lemma 13.4]{PooSch}.
\end{proof}

\begin{rem}
\label{Ddefrem}
The involution $(v, w) \mapsto (v^{-1}, w^{-1})$ 
has exactly the two fixed points $P_1$ and~$P_4$,
therefore the quotient of $C$ by this involution is a curve~$D$ of genus~6.
The Jacobian~$J$ of~$C$ is isogenous to a product of two copies of the Jacobian
of~$D$. 
The genus $6$ curve $D$ is 
$$
Y^7 - 7Y^5 + 14Y^3 - 7Y = (2X^3 - 6X^2 - 7X + 24)/(X^3 - 3X^2 - 4X + 13)
$$
which by a change of variables is
$$
(-2 + Y)(-1 - 2Y + Y^2 + Y^3)^2 = X/(1 - 4X + 3X^2 + X^3).
$$
\end{rem}

\begin{rem}
\label{storyrem}
We now elaborate on the path that led to the proof that $|C(\Q)|=6$
(although it isn't actually used in our proof).
The subgroup of~$J(\Q)$ generated by differences of known rational points
on~$C$ has rank~$4$.
Since $J$ is $\Q$-isogenous to $\Jac(D)^2$, the rank of $J(\Q)$ is even, so it must 
be either $4$ or $6$.
The $S_3$-action shows that $|C(\Q)|$ is divisible by $6$.
A Chabauty argument at the prime $2$, using \cite{StollChabauty}, 
then gives that $|C(\Q)|$ is $6$ or $12$.
The argument is as follows.
Consider the pairing
$J(\Q_2) \times \Omega(C/\Z_2) \to \Q_2$
defined by $G, \omega \mapsto  (G,\omega) := \int_G\omega$
where $\Omega(C/L)$ is the set of holomorphic differentials
on $C$ over $L$ and $G$ is a degree 0 divisor on $C$.
View $Z \subset C(\Q) \subset J(\Q)$ (fixing a basepoint in $Z$) and let
$$
V := \{ \omega\in \Omega(C/\Z_2) : (J(\Q),\omega) = 0 \} \subseteq
V_0 := \{ \omega\in \Omega(C/\Z_2) : (Z,\omega) = 0 \}.
$$
Let
$\widetilde{V}$ (resp., $\widetilde{V_0}$) be the image of $V$ 
(resp., $V_0$) under the reduction map
$\Omega(C/\Z_2) \to \Omega(C/\F_2)$.
Then
$\dim_{\F_2}\widetilde{V} = \rk_{\Z_2}V 
\ge 12 - \rk\,{J(\Q)} \ge 6$.
Let 
$$
W := \{ \omega\in \Omega(C/\F_2) : \ord_P(\omega) \ge 2 \\
\text{ for all $P\in \{ P_0, P_1, P_\infty \}$} \}.
$$
If $|C(\Q)| = 12$, each fiber of $C(\Q) \to C(\F_2)$ has size 4, and
it follows (using \cite{StollChabauty}) that
$\widetilde{V} \subseteq W \cap \widetilde{V_0}$.
So if $\dim(W \cap \widetilde{V_0}) < 6$,
then $|C(\Q)|=6$. 
Balakrishnan, using ideas of Kedlaya and Wetherell,
computed $\int_P^Q\omega$ for $\omega\in \Omega(C/\Z_2)$ and $P, Q \in Z$.
This gives $V_0$ and $\widetilde{V_0}$.
Unfortunately, $\dim(W \cap \widetilde{V_0}) = 6$.
However, if $\rk\,{J(\Q)}$ were $4$,
then
$\dim_{\F_2}\widetilde{V} = \rk_{\Z_2}V 
\ge 12 - \rk\,{J(\Q)} =  8$.
This would imply that $|C(\Q)|=6$,
since  $\widetilde{V} \subseteq W \cap \widetilde{V_0}$
when $|C(\Q)| = 12$.
As explained in Appendix A, we had reason to believe that $\rk_\Z J(\Q) = 6$.
This gave motivation to find additional generators of $J(\Q)$, which we do 
in the proof of the following theorem.
\end{rem}

\begin{thm}
\label{rkQ}
$\rk_\Z J(\Q) = 6$.
\end{thm}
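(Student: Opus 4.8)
The plan is to match the upper bound $\rk_\Z J(\Q)\le 6$ of Lemma~\ref{rkQlem} with an equal lower bound by exhibiting enough independent points in $J(\Q)$. Two structural facts make this cheap. First, since $J$ is $\Q$-isogenous to $\Jac(D)^2$, the rank $\rk_\Z J(\Q)$ is even (as already noted in Remark~\ref{storyrem}), so it is either $4$ or $6$. Second, the differences of the six rational points in $Z$ are known to span a subgroup $\Gamma\subseteq J(\Q)$ of rank $4$. Therefore it suffices to produce a single divisor class in $J(\Q)$ that is $\Q$-linearly independent of $\Gamma$: this forces $\rk_\Z J(\Q)\ge 5$, hence $\rk_\Z J(\Q)=6$.

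To find such a class I would run a point search on the fibres of the degree-$7$ map $x\colon C\to\P^1$: one looks for values of $x$ in $\Q$, or in quadratic or cubic fields, for which the fibre of (the nonsingular model of) $C$ consists of points whose Galois orbit, taken together with the rational base point $P_\infty$, gives a $\Q$-rational divisor class; the $S_3$-action on $C$ and the isogeny $J\sim\Jac(D)^2$ guide which fibres to examine and help recognize which resulting classes fall outside $\Gamma$. Equivalently one searches directly on the genus-$6$ quotient $D$, where the search is cheaper, and transports the resulting classes up to $C$. Given the evidence recalled in Remark~\ref{storyrem} that $\Jac(D)$ has analytic rank $3$, one expects such a search to yield two new independent classes, which together with $\Gamma$ already account for the full rank $6$.

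It then remains to certify that a candidate new class $D'$ is genuinely independent of $\Gamma$, and I would do this in one of the two standard ways. One option is to compute the Gram matrix of the N\'eron--Tate canonical height pairing on a generating set of $\Gamma+\Z D'$ and check that its determinant is nonzero; given the upper bound this immediately yields $\rk_\Z J(\Q)=6$. The other option avoids archimedean height computations: reduce modulo a suitable prime $\ell\ne 7$ of good reduction, compute the finite group $J(\F_\ell)$ together with the images of the candidate generators, and exhibit a prime $q$ not dividing $\#J(\Q)_{\tors}$ for which those images are $\F_q$-linearly independent in $J(\F_\ell)[q]$; since any nontrivial integral relation among the generators would, after clearing torsion, reduce to a nontrivial $\F_q$-relation among the images, this rules out all such relations. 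Either computation is carried out with Magma, using the isogeny $J\sim\Jac(D)^2$ to work on the lower-dimensional factor $\Jac(D)$.

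The main obstacle is precisely this independence certification: $C$ is a genus-$12$ curve that is not hyperelliptic, so computing canonical heights on $J$ with provable precision is delicate, and a naive reduction-modulo-$\ell$ calculation in a $24$-dimensional torsion group is heavy. The crux is therefore to push the problem down to the genus-$6$ curve $D$ (or to a simple isogeny factor of $\Jac(D)$, or to a single well-chosen reduction), where the verification becomes both feasible and rigorous. Once one new independent class has been certified in this way, the parity argument closes the proof at once.
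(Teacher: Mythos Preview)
Your plan is correct and matches the paper's approach: search for closed points of higher degree on the genus-$6$ quotient $D$, pull the resulting divisors back to $C$, and certify $\Z$-independence by reduction modulo small good primes (the paper uses $\ell \in \{2,3,11\}$ with $q=5$, after checking via $|J(\F_{13})|$ that $J(\Q)$ has no $5$-torsion, and exhibits explicit prime divisors on $C$ of degrees $8$ and $16$). Your use of the parity $\rk_\Z J(\Q) = 2\,\rk_\Z \Jac(D)(\Q)$ to reduce to finding a single new independent class is a pleasant shortcut the paper does not take---it instead produces enough divisors to witness rank $6$ directly---but otherwise the two arguments coincide.
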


\begin{proof}
By Lemma \ref{rkQlem}, we have $\rk_\Z J(\Q) \le 6$.

We look for closed
points of higher degree on the genus $6$ quotient $D$ of $C$ defined in Remark \ref{Ddefrem}. 
For $D$, we use the model in~$\BP^3$ obtained
as the image of the map sending a point $(v, w)$ on~$C$ to
$(v + \frac{1}{v} : w + \frac{1}{w} : \frac{v}{w} + \frac{w}{v} : 1)$.
This model is a smooth curve of genus~6 (and degree~10) still having bad reduction
only at~$7$. We intersect $D$ with hyperplanes and quadrics of increasing
height and split the resulting divisor as a sum of prime divisors. If it splits,
we check if the new prime divisor leads to a larger subgroup in the Jacobian
of~$D$. For this check, we use the homomorphism
\[ \Pic(D) \longrightarrow \prod_{p \in S} \Pic(D/\F_p) \]
where $S$ is a suitable set of primes (we used $S = \{2,3,5,11,13,17\}$);
compare~\cite{StollXdyn06} for details. 

In this way, we find a point of degree~4 that leads to a larger group that is 
still of rank~$2$, and a point of degree~8 that increases the rank to~$3$.
Pulling back these points to~$C$, we obtain a point of degree~$8$ of the form
\[  (-\alpha^7 + 2 \alpha^6 + \alpha^4 + \alpha^2 + 2, \alpha) \]
with $\alpha$ a root of
\[ t^8 - 2 t^7 - t^5 - t^3 - 2 t + 1 \,, \]
and a point of degree~$16$ of the form
\begin{align*}
  \Bigl(\tfrac{1}{216}(187 &\beta^{15} + 894 \beta^{14} + 98 \beta^{13} + 408 \beta^{12}
                   + 1037 \beta^{11} + 1245 \beta^{10} + 1754 \beta^9 + 1371 \beta^8 \\
                        &{} + 783 \beta^7 + 1702 \beta^6 + 1497 \beta^5 + 793 \beta^4
                            + 708 \beta^3 + 250 \beta^2 + 450 \beta + 575),
                         \beta\Bigr)
\end{align*}
with $\beta$ a root of
\begin{multline*}
 t^{16} + 5 t^{15} + 2 t^{14} + 4 t^{13} + 5 t^{12} + 10 t^{11} + 11 t^{10}
            + 13 t^9 + 6 t^8 \\
        + 13 t^7 + 11 t^6 + 10 t^5 + 5 t^4 + 4 t^3 + 2 t^2 + 5 t + 1 \,.
\end{multline*}
Denote the corresponding prime divisors on~$C$ by $Q_1$ and~$Q_2$, respectively. 
Let $Q_3=\sigma(Q_1)$ and~$Q_4=\sigma(Q_2)$, where $\sigma\in\Sigma$
is the map $(v,w) \mapsto (\frac{1}{1-v},w)$.  
Let $H \subset J(\Q)$ be the subgroup generated by degree-zero 
linear combinations of the ten prime divisors:
\begin{equation}
\label{tpd}
\{P_0, P_1, P_\infty, P_2, P_3, P_4, Q_1, Q_2, Q_3, Q_4\}.
\end{equation}

We compute that $|J(\F_{13})| = 3^6\cdot 7^4\cdot 13^2\cdot 349^2$, and therefore 
$J(\Q)$ has no $5$-torsion.  (By considering additional reductions 
one can show that $|J(\Q)_\tors|$ divides $7^2$.)
In addition, we check that the image of 
$$
H \too J(\F_2) \times J(\F_3) \times J(\F_{11})
$$
has a quotient isomorphic to $(\Z/5\Z)^6$.  It follows that $\rk_\Z H \ge 6$, 
so $\rk_\Z J(\Q) = 6$ and $H$ has finite index in $J(\Q)$.
\end{proof}

\begin{rem}
In the course of the calculations in the proof of Theorem \ref{rkQ}, we showed also 
that the subgroup $H \subset J(\Q)$ is free of rank $6$, and that the relations 
among the divisors \eqref{tpd} are generated by:
\begin{align*}
  P_0 + P_1 + P_\infty &\sim P_2 + P_3 + P_4, \\
  P_1 + P_4 + Q_1 &\sim P_2 + P_\infty + Q_3, \\
  5 P_0 + 3 P_1 + 4 P_\infty + 4 P_2 + 5 P_3 + 3 P_4 &\sim 2 Q_1 + Q_3.
\end{align*}
\end{rem}

\section{Proof of Theorem \ref{final}}
\label{finalpfsect}

Since the Mordell-Weil rank of $J(\Q)$ is smaller than the genus of $C$, we can apply
Chabauty's method (see~\cite{Chabauty,Coleman,StollChabauty})
to calculate $C(\Q)$.

Define divisors on $C$:
\begin{gather*}
D_x = \sum_{\zeta \in \bmu_7} (x,\zeta) ~ \text{for $x \in \{0,1,\infty\}$}, \quad 
G_1 = \sum_{i=1}^3(\alpha_i,0), \quad  G_2 = \sum_{i=4}^6(\alpha_i,\infty),
\end{gather*}
where (as before), $\alpha_1, \alpha_2, \alpha_3$ (resp., $\alpha_4, \alpha_5, \alpha_6$) 
are the roots of $v^3-2v^2-v+1$ (resp., $v^3-v^2-2v+1$).  
One checks easily that the divisors of the rational functions 
$v$, $w$, and $v^3-v^2-2v+1$ are given by
\begin{equation}
\label{pre7.1}
(v) = D_0 - D_\infty, \quad (w) = G_1 - G_2, \quad (v^3-v^2-2v+1) = 7G_2 - 3D_\infty.
\end{equation}

If $L$ is a field of characteristic different from $7$, 
let $\Omega(C/L)$ denote the $L$-vector space of holomorphic differentials 
on $C/L$.  If $\omega \in \Omega(C/L)$, let $(\omega)$ denote the divisor of $\omega$. 

\begin{lem}
\label{pt}
Suppose $L$ is a field of characteristic different from $7$.  
\begin{enumerate}
\item
The divisor of the differential $dv$ is $(dv) = 6G_1 + 6G_2 - 2D_\infty$.
\item
A basis for the $L$-vector space $\Omega(C/L)$ is given by
$$
\omega_{i,j} := \frac{v^i w^j}{(v^3-v^2-2v+1)w^6} dv, \quad 0 \le i \le 1,\; 0 \le j \le 5.
$$
\end{enumerate}
\end{lem}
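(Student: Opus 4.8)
The plan is to read off $(dv)$ from the ramification of the degree-$7$ morphism $v\colon C\to\P$, and then to obtain~(ii) by combining this with~\eqref{pre7.1} and a dimension count. Set $f_1(v)=v^3-2v^2-v+1$ and $f_2(v)=v^3-v^2-2v+1$, so that $w^7=f_1(v)/f_2(v)$ on~$C$; note that $f_1$ and $f_2$ have no common root (since $f_1-f_2=-v(v-1)$ while $f_1(0),f_1(1)\neq0$) and that $\mathrm{disc}(f_1)=\mathrm{disc}(f_2)=49$, which is nonzero in~$L$ because $\ch(L)\neq7$, so the six roots $\alpha_1,\dots,\alpha_6$ of $f_1f_2$ are distinct; also $f_1/f_2$ has a simple zero, hence is not a $7$th power in $\overline{L}(v)$, so $C$ is geometrically integral.

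For part~(i): by~\eqref{pre7.1} the morphism $v$ has degree~$7$, with $v^*(0)=D_0$ and $v^*(\infty)=D_\infty$ both reduced of degree~$7$; in particular $v$ is unramified over~$0$ and~$\infty$. From $w^7=f_1(v)/f_2(v)$ and $(w)=G_1-G_2$ (again~\eqref{pre7.1}) it follows that $v-\alpha_i$ vanishes to order exactly~$7$ at the unique point of~$C$ above $v=\alpha_i$, so $v$ is (tamely) totally ramified there, and these are the only ramified points. Hence $dv$ is regular and nonvanishing off $D_\infty\cup G_1\cup G_2$, has a double pole at each of the seven points of $D_\infty$ (where $v$ has a simple pole), and has a zero of order~$6$ at each point of $G_1$ and $G_2$ (where $w$ is a uniformizer and $v-\alpha_i$ has a zero of order~$7$). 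This gives $(dv)=6G_1+6G_2-2D_\infty$; equivalently, this is the Riemann--Hurwitz identity $(dv)_C=v^*\bigl((dv)_{\P}\bigr)+\mathfrak{R}$ with $(dv)_{\P}=-2(\infty)$ and ramification divisor $\mathfrak{R}=6(G_1+G_2)$.

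For part~(ii): the same Riemann--Hurwitz count as in the proof of Lemma~\ref{cjempty}(i) (valid here since $\ch(L)\neq7$) shows $C$ has genus~$12$, so $\dim_L\Omega(C/L)=12$; and there are exactly~$12$ pairs $(i,j)$ with $0\le i\le1$ and $0\le j\le5$. Thus it suffices to show the $\omega_{i,j}$ are $L$-linearly independent and holomorphic. For independence, $L(C)=L(v)(w)$ with $[L(C):L(v)]=\deg v=7$, so $\{1,w,\dots,w^6\}$ is an $L(v)$-basis of $L(C)$, whence $\{v^iw^j:0\le i\le1,\,0\le j\le5\}$ is $L$-linearly independent; since each $\omega_{i,j}$ equals $v^iw^j$ times the fixed nonzero differential $\mu:=dv/(f_2(v)w^6)$, the $\omega_{i,j}$ are $L$-linearly independent as well. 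For holomorphy, part~(i) together with~\eqref{pre7.1} gives
\[
(\omega_{i,j})=i\,(v)+(j-6)\,(w)-(v^3-v^2-2v+1)+(dv)=i\,D_0+(1-i)\,D_\infty+j\,G_1+(5-j)\,G_2,
\]
which is an effective divisor precisely when $0\le i\le1$ and $0\le j\le5$; hence the $\omega_{i,j}$ form a basis of $\Omega(C/L)$. The one step that needs care is part~(i): one must ensure the $\alpha_i$ really give six distinct, singly occurring, totally ramified fibres (this is exactly where $\ch(L)\neq7$ enters) and that $v=\infty$ is unramified, so that the local orders of $dv$ are as claimed; given that, part~(ii) is a short divisor computation plus the dimension count.
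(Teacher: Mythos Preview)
Your argument is correct and follows essentially the same route as the paper: compute the local orders of $dv$ from the ramification of the degree-$7$ map $v\colon C\to\P$, then combine with \eqref{pre7.1} to read off $(\omega_{i,j})$ and finish with the dimension count $\dim_L\Omega(C/L)=g=12$ and linear independence of $\{v^iw^j\}$ over $L$. The only minor slip is the parenthetical ``where $w$ is a uniformizer'': at points of $G_2$ it is $1/w$, not $w$, that uniformizes (since $(w)=G_1-G_2$), though your conclusion that $v-\alpha_i$ vanishes to order~$7$ there is unaffected. The paper closes part~(i) by the degree count $\deg(dv)=2g-2=22$ rather than by arguing directly that $v$ is unramified elsewhere; your Riemann--Hurwitz phrasing achieves the same thing.
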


\begin{proof}
The function $v$ has (simple) poles 
at each of the $7$ points $\{(\infty,\zeta) : \zeta \in \bmu_7\}$, and no other poles.  Hence 
$\ord_{(\infty,\zeta)}(dv) = -2$, and $\ord_P(dv) \ge 0$ for all other points $P$.  
If $\alpha$ is a root of $v^3-2v^2-v+1$, then the equation 
for $C$ shows that $\ord_{(\alpha,0)}(v-\alpha)$ is a (positive) multiple of $7$. 
Since the polar divisor of $v$ is $D_\infty$, we conclude that $\ord_{(\alpha,0)}(v-\alpha) = 7$, 
and 
$$
\ord_{(\alpha,0)}(dv) = \ord_{(\alpha,0)}(d(v-\alpha)) = 6.
$$
Similarly, if $\alpha$ is a root of $v^3-v^2-2v+1$ then $\ord_{(\alpha,\infty)}(dv) = 6$.  
Since the divisor $(dv)$ has degree $2g-2 = 22$, we conclude that 
$(dv) = 6G_1+6G_2-2D_\infty$, giving (i).

It now follows from \eqref{pre7.1} that the differential $\omega_{i,j}$ has divisor
$$
(\omega_{i,j}) = i D_0 + (1-i) D_\infty + jG_1 + (5-j)G_2.
$$
In particular $\omega_{ij}$ is holomorphic if (and only if) $0 \le i \le 1$ and 
$0 \le j \le 5$.

Since $v$ is transcendental over $L$ and $w$ has degree $7$ over $L(v)$, the set 
$\{v^i w^j : 0 \le i \le 1, 0 \le j \le 5\}$ is linearly independent over $L$, 
so $\{\omega_{ij} : 0 \le i \le 1, 0 \le j \le 5 \}$ is linearly independent over $L$.
Since $\dim_L(\Omega(C/L))= \genus(C) = 12$, we have (ii).
\end{proof}

Let $\Omega(C/\Z_5)$ be the $\Z_5$-span of the differentials $\omega_{i,j}$ with
$0 \le i \le 1$ and $0 \le j \le 5$.
Consider the bilinear pairing 
$$
J(\Q_5)/J(\Q_5)_\tors \times \Omega(C/\Z_5) \to \Q_5
$$
of free $\Z_5$-modules of rank $12$ with trivial left and right kernel
that is used on p.~1210 of \cite{StollChabauty}.
Let $V \subset \Omega(C/\Z_5)$ be the orthogonal complement under this pairing of 
(the closure of) $J(\Q) \subset J(\Q_5)$.  
By Theorem \ref{rkQ} we have $\rk_\Z J(\Q) = 6$.

Let $\tilde{V} \subset \Omega(C/\F_5)$ be the image of $V$ 
under the (surjective) reduction map $\red_5: \Omega(C/\Z_5) \to \Omega(C/\F_5)$.  
Since $\rk_{\Z_5}(\Omega(C/\Z_5)) = 12 = \dim_{\F_5}(\Omega(C/\F_5))$,
we have $\ker(\red_5) = 5\Omega(C/\Z_5)$.
Since $\Omega(C/\Z_5)/V$ is torsion-free, we have 
$5V= V \cap 5\Omega(C/\Z_5)=\ker(\red_5|_V)$.
Thus $\tilde{V} \cong V/5V$, so  $\dim_{\F_5}(\tilde{V}) = \rk_{\Z_5}(V) = 6$.

We will show that for each point $P \in C(\F_5)$, there exists a 
differential $\omega_P \in \tilde{V}$ that does not vanish at~$P$.
Proposition 6.3 of \cite{StollChabauty} then shows that there is at most
one point in $C(\Q)$ that reduces to $P$.
Since the set $Z\subseteq C(\Q)$ bijects onto~$C(\F_5)$ via
the reduction map, that will show $Z= C(\Q)$, as desired.

Next we determine the space $\tilde{V}$ explicitly.

Using lattice basis reduction (with higher weights on the higher-degree
points), we find the following generators of the intersection of the known
finite-index subgroup of~$J(\Q)$ and the kernel of reduction mod~$5$:
\begin{align*}
  B_1 &= P_1 - P_\infty + P_2 - P_4 - 6 Q_2  + 6 Q_4, \\
  B_2 &= -4 P_0 + 9 P_\infty+ 5 P_2 - 3 P_3 + P_4  + 4 Q_1 - 7 Q_3 + Q_4, \\
  B_3 &= -4 P_0 + 9 P_1 + P_2 - 3 P_3 + 5 P_4 - 7 Q_1 + Q_2 + 4 Q_3, \\
  B_4 &= -2 P_0 - 2 P_1 + 4 P_\infty - 11 P_2 + 13 P_3 - 2 P_4 - 4 Q_1 + Q_2 - 2 Q_3 + 2 Q_4, 
         \\
  B_5 &=  - 6 P_1 + 6 P_\infty - 9 P_2 + 9 P_4 - 2 Q_1 - Q_2 + 2 Q_3 + Q_4, \\
  B_6 &= 10 P_0- 8 P_1 - 14 P_\infty + P_2 - 7 P_3  - 6 P_4
          - 12 Q_1 + 11 Q_2 - 15 Q_3 + 4 Q_4.
\end{align*}
For each of these divisors~$B_m$, we compute the Riemann-Roch space of
$\pm B_m + 12 P_0$ (with sign chosen in an attempt to make the computation
more efficient). We check that it is of dimension one and that the same
is true for the corresponding Riemann-Roch space over~$\F_5$. Let
$f_m$ be a function spanning the space; then it follows that the divisor
of~$f_m$ must be $\mp B_m - 12 P_0 + D_m$ with an effective divisor~$D_m$
of degree~$12$ supported on points having the same reduction mod~$5$ as~$P_0$.
(Note that the divisor of the reduction of~$f_m$ mod~$5$ must be the reduction
of~$\mp B_m$, since this reduction is a principal divisor and the Riemann-Roch
space of the reduction of $\pm B_m + 12 P_0$ has dimension one.)

We write each basis element $\omega_{ij}\in\Omega(C/\Z_5)$ 
as a power series in~$t$ times~$dt$,
where $t = v$ is a uniformizer at~$P_0$ (and also a uniformizer at the reduction
of~$P_0$ mod~$5$). We then integrate formally to obtain the corresponding
logarithms $\lambda_{ij}$ as power series in~$t$. These power series converge
5-adically on points in the same residue class as~$P_0$. The expansion of~$f_m$
as a Laurent series in~$t$ is of the form $t^n F_m(t)$ where $n$ is
$-12 \mp{}\!\!$ the multiplicity of~$P_0$ in~$B_m$. After scaling
by a power of~$5$ if necessary, the series $F_m(t)$ is in~$\Z_5[\![t]\!]$, the coefficients
of~$t^k$ with $k < 12$ have positive valuation, and the coefficient of~$t^{12}$
is a 5-adic unit. This reflects the facts that $D_m$ has the same reduction
mod~$5$ as~$12 P_0$, and $P_0$ is not in the support of the reductions mod~$5$ 
of any of the $Q_i$ or $P_j$ with $j \neq 0$ (as is easily checked).
We multiply this series by an invertible series in~$\Z_5[\![t]\!]$
such that we obtain a polynomial of degree~$12$ (to sufficient 5-adic precision).
The roots of this polynomial are then the values~$t_1, \ldots, t_{12}$
of~$t$ at the points in the
support of~$D_m$. We can then easily compute the power sums of these
values (by looking at the logarithmic derivative of the reverse of the
polynomial, see again~\cite{StollXdyn06}) and therefore evaluate the integrals
\[ \int_{[D_m - 12 P_0]} \omega_{ij} = \sum_{k=1}^{12} \lambda_{ij}(t_k) \,. \]
The values are in~$5\Z_5$. Dividing by~$5$ and reducing mod~$5$, we obtain
a linear relation that every differential in $\tilde{V}$ has to satisfy.

We used Magma~\cite{MAGMA} for these computations. A recent version (2.17-1)
spends less than 16~hours on current hardware to produce the relations we
are looking for. We obtain a 6-dimensional space of relations generated
by the following elements (in terms of the basis dual to
$\omega_{00}, \ldots, \omega_{05}, \omega_{10}, \ldots, \omega_{15}$):
\begin{align*}
  &(1, 0, 0, 0, 0, 0, 4, 1, 1, 3, 3, 4), \\
  &(0, 1, 0, 0, 0, 0, 1, 1, 0, 2, 3, 0), \\
  &(0, 0, 1, 0, 0, 0, 3, 4, 0, 0, 4, 2), \\
  &(0, 0, 0, 1, 0, 0, 3, 1, 0, 1, 1, 2), \\
  &(0, 0, 0, 0, 1, 0, 0, 2, 3, 0, 0, 4), \\
  &(0, 0, 0, 0, 0, 1, 1, 2, 2, 4, 4, 2).
\end{align*}
As stated above, Theorem \ref{final}
would follow if we can find, for $P \in C(\F_5)$, a 
differential $\omega_P \in \tilde{V}$ that does not vanish at~$P$.
Next we give an explicit differential $\omega$ that works for all $P \in C(\F_5)$.
Let $h(v,w)=3+w+3w^3+2w^4+v(w^2+2w^3)$, $f_2(v)=v^3-v^2-2v+1$, and
$$
\omega := 3\omega_{00}+\omega_{01}+3\omega_{03}+2\omega_{04}+\omega_{12}+2\omega_{13} 
   = \frac{h(v,w)}{w^6}\cdot\frac{dv}{f_2(v)} \in \Omega(C/\F_5).
$$
The coefficients of the $\omega_{ij}$ that define $\omega$ satisfy the relations above, so $\omega \in \tilde{V}$.

If $0 \le i \le 4$ then \eqref{pre7.1} and Lemma \ref{pt}(i) show that 
$dv/f_2(v)$ does not vanish at $P_i$.
By inspection $h(v,w)/w^6$ does not vanish at $P_i$, 
so $\omega$ does not vanish at $P_i$.

Similarly, $dv/f_2(v)$ vanishes at $P_\infty$ 
but $v\,dv/f_2(v)$ does not,
by \eqref{pre7.1} and Lemma \ref{pt}(i).  
Since $w^2 + 2w^3$ also does not vanish at $P_\infty$, 
it follows that $\omega$ does not vanish at $P_\infty$. 
This concludes the proof of Theorem \ref{final}. 

As an independent check, we have performed a similar computation on the curve~$D$.
There are four points in~$D(\F_5)$ that are images of points in~$C(\F_5)$. For
each of these points, we find a differential that kills the Mordell-Weil group
of~$D$ and whose reduction mod~$5$ does not vanish at the given point. This
shows that $D$ can have at most four rational points that are in the image
of~$C(\Q)$; this number is accounted for by the known points.


\begin{appendix}

\section{Conjectural determination of the rank}

Recall that $J$ is isogenous to the square of the Jacobian~$J_D$ of a curve~$D$
over~$\Q$ of genus~6 defined in Remark \ref{Ddefrem}.
In this appendix, we explain the computations that led us to believe that
the Mordell-Weil rank of~$J_D$ is~3 and hence that the
rank of~$J(\Q)$ is~6, assuming standard conjectures on $L$-series and the
conjecture of Birch and Swinnerton-Dyer for~$J_D$.

Since we already knew
that the rank of~$J_D(\Q)$ must be either 2 or~3, it would be sufficient to verify
that a positive sign in the functional equation for the $L$-series of~$J_D$
is not consistent with standard conjectures. The result would be even more
convincing if it could also be shown that a negative sign, and 
analytic rank~3 in particular, 
\emph{is} consistent with the conjectures. 

Two of the
main ingredients for the computation are the conductor of $J_D$ and the 
Fourier coefficients of its $L$-series.  
Since $D$ (and therefore~$J_D$) has bad reduction only at~7, the conductor is of the form 
$7^n$ for some $n$.  Since the reduction of~$J_D$ at~7 is totally
unipotent (this is shown by computing a proper regular model of~$D$ over~$\Z_7$,
whose special fiber turns out to be tree-like and without components of positive
genus), it follows that $n \ge 2\cdot\genus(D) = 12$.  An upper bound  $n \le 26$ 
follows from~\cite{LRS} or \cite{BrumerKramer}.

The Fourier coefficients of the
$L$-series can be obtained by counting points on~$D$ or~$C$
over all finite fields~$\F_{p^e}$
for $p^e$ below some bound~$N$ (and $e \le \genus(D) = 6$, making use of the Weil
conjectures). This provides the first~$N$ coefficients. In our case, Balakrishnan,
Sutherland, and Kedlaya provided these coefficients for $N = 10^7$ and $p \neq 7$.
The Euler factor at~7 is trivial, since
the reduction is totally unipotent. This many coefficients turned out to
be enough to produce satisfactory results (``a handful'' of digits of accuracy,
according to Rubinstein).

Rubinstein performed the $L$-series computations for us,
using his \texttt{lcalc} package~\cite{lcalc}. He first checked 
that the rank of~$J_D(\Q)$ being~2 is not consistent with the standard
conjectures, by computing the central $L$-value assuming the root number to
be~$+1$ and the conductor to be $7^{12}, 7^{13}, \ldots, 7^{26}$. In each
case the result is clearly non-zero.  
Since we know that the rank is $2$ or $3$, the Birch and Swinnerton-Dyer conjecture 
leads to the conclusion that the rank must be $3$.  This computation is based
on the approximate functional equation; see~\cite[Thm.~1]{Rubinstein}.

As a further check, Rubinstein used the approximate functional equation to
compute the first 17 zeros on the critical line (normalized to be $s = 1/2$) 
with positive imaginary part, assuming the root number to be $-1$ and the 
conductor to be $7^{26}$.  He then compared the two sides of the ``explicit
formula'', assuming a triple zero at the critical point and using the function
$\phi(x) = \left(\frac{\sin 4x}{4x}\right)^4$ (whose Fourier transform has
compact support of a size that allows essentially exact computation of the
right hand side of the explicit formula with the known Fourier coefficients). 
The values obtained
for $\sum_{\gamma} \phi(\gamma - t)$, where $\gamma$ runs through the zeros
and $0 \le t \le 3$, agree almost perfectly.  See Figure~\ref{plot} for the corresponding 
calculation using only $16$ zeros away from the real axis, where the 
two graphs can be seen to begin to diverge for $t > 2.8$.

\begin{figure}[htb]

\vspace{-5mm}

\begin{center}
\includegraphics[width=\textwidth]{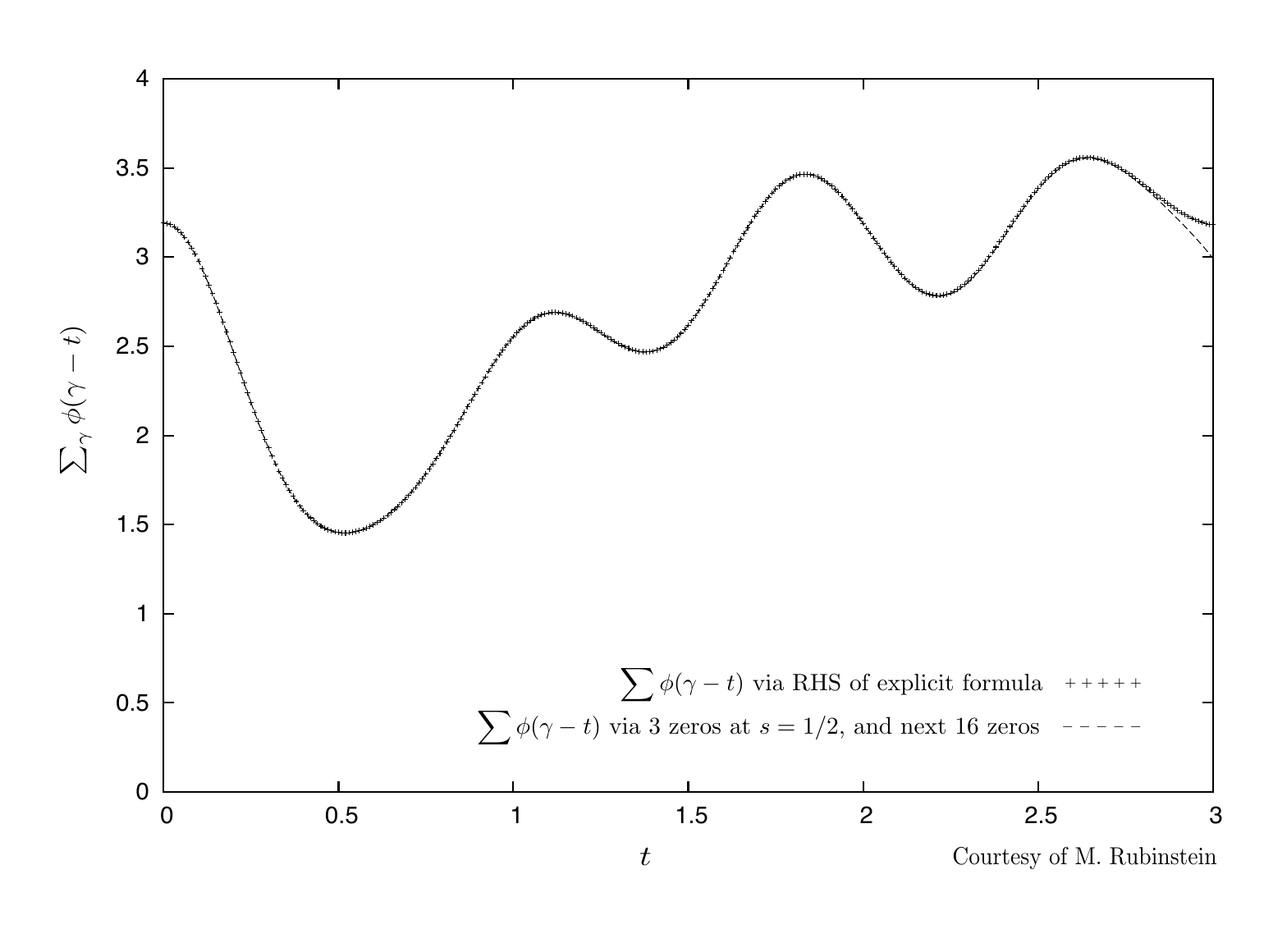}
\end{center}

\vspace{-10mm}
 
\caption{\label{plot}Comparison of left and right sides of the explicit formula.}
\end{figure}

Heuristic considerations indicate that the discriminant of $C$ should be 
$7^{52}$. The given model of $C$ is already regular at 7, so the exponent of 
the conductor should equal that of the discriminant. Since the conductor of 
$C$ is the square of that of $D$, this provides further evidence that $D$ has 
conductor $7^{26}$.

Note that the global root number of the $L$-function of $J_D$
should be the product of local root numbers,
which in our case will all be~$+1$ except at $p = 7$. So it would be sufficient
to determine the local root number at~7 in order to get the parity of the rank.
However, computing such a local root number when the reduction is unipotent 
seems to be rather hard.

\end{appendix}



\begin{thebibliography}{99}

\bibitem{MAGMA}
  {\sc W.\ Bosma, J.\ Cannon, C.\ Playoust},
  {\it The Magma Algebra System I: The User Language},
  J. Symb. Comp. {\bf 24} (1997), 235--265.
  (See also {\tt http://www.maths.usyd.edu.au/})

\bibitem{bostan}
  {\sc A.\ Bostan, F.\ Morain, B.\ Salvy, \'E.\ Schost}, Fast algorithms for 
  computing isogenies between elliptic curves,
  {\em Math.\ Comp.} {\bf 77} (2008), 1755--1778.

\bibitem{BrumerKramer}
  {\sc A.\ Brumer, K.\ Kramer},
  {\it The conductor of an abelian variety},
  Compositio Math., {\bf 92}:2 (1994), 227--248.

\bibitem{Chabauty}
  {\sc C. Chabauty},
  {\it Sur les points rationnels des courbes alg\'ebriques de genre sup\'erieur 
   \`a l'unit\'e} (French),
  C. R. Acad. Sci. Paris {\bf 212} (1941), 882--885.

\bibitem{Coleman}
  {\sc R.F. Coleman}, {\it Effective Chabauty}, Duke Math. J. {\bf 52} (1985),
  765--770.

\bibitem{cremona}   
   {\sc J.\ Cremona},  Algorithms for modular elliptic curves, Cambridge University Press (1992). 

\bibitem{dirichlet}
   {\sc G.\ L.\ Dirichlet}, Sur l'usage des s\'eries infinies dans la th\'eorie des nombres,
   {\em J.\ Reine Angew.\ Math.} {\bf 18} (1938), 259--274; Werke I (1889), 357--370.

\bibitem{greenberg}  
	{\sc R.\ Greenberg},  
	The image of Galois representations attached to elliptic curves with an 
   isogeny, 
   {\em Amer.\ J.\ Math.} {\bf  134} (2012),   1167--1196.

\bibitem{kenku13}    
   {\sc M.\  A.\ Kenku}, The modular curves $X_0(169)$ and rational isogeny,   
   {\em J.\ London Math.\ Soc.}  {\bf 22} (1981), 239--244.

\bibitem{kenku}    
   {\sc M.\  A.\ Kenku}, On the modular curves $X_0(125), X_1(25)$, and $X_1(49)$,   
   {\em J.\ London Math.\ Soc.}  {\bf 23} (1981), 415--427.

\bibitem{kubert}
   {\sc D.\ Kubert}, Universal bounds on the torsion of elliptic curves, 
   {\em Proc. London Math.\ Soc.} {\bf 33} (1976), 193--237.
   
\bibitem{langcycl}       
	{\sc S.\ Lang}, Cyclotomic fields,  Graduate Texts in Mathematics, Vol.\ 59,
 Springer-Verlag, New York-Heidelberg, 1978.

\bibitem{ligozat}    
	{\sc G.\ Ligozat}, Courbes modulaires de genre 1, 
   {\em  Bull.\ Soc.\ Math.\ France, M\'emoire} {\bf 43} (1975), 1--80. 

\bibitem{LRS} 
	{\sc P.\ Lockhart, M.\ Rosen, J.\ H.\ Silverman}, 
	An upper bound for the conductor of an abelian variety, 
	{\em  J.\ Algebraic Geom.} {\bf 2} (1993), 569--601.

\bibitem{mazur} 
	{\sc B.\ Mazur},  Rational isogenies of prime degree, {\em  Invent.\ Math.} 
   {\bf 44} (1978), 129--162. 

 \bibitem{pari}
    PARI/GP,  \url{http://pari.math.u-bordeaux.fr}.
    
\bibitem{PooSch} 
   {\sc B.\ Poonen, E.\ F.\ Schaefer}, Explicit descent for Jacobians of cyclic covers of the 
   projective line, {\em J.\ Reine Angew.\ Math.} {\bf 488} (1997), 141--188. 

\bibitem{Rubinstein}
  {\sc M.\ Rubinstein},
  {\it Computational methods and experiments in analytic number theory.}
  Recent perspectives in random matrix theory and number theory, 425--506,
  London Math. Soc. Lecture Note Ser., 322, Cambridge Univ. Press, Cambridge, 2005.

\bibitem{lcalc}
  {\sc M.\ Rubinstein},
  {\it C++ L-function class library and command line interface}, \\
  http://oto.math.uwaterloo.ca/\verb+~+mrubinst/L\_function\_public/L.html

\bibitem{serre}
	{\sc J-P.\ Serre}, Propri\'et\'es galoisiennes des points d'ordre fini des 
   courbes elliptiques, {\em Invent. \ Math.} {\bf 15} (1972), 259--331.
   
\bibitem{serretate}  
	{\sc J-P.\ Serre, J.\ Tate}, Good reduction of abelian varieties, 
{\em Ann.\ of Math.} (2) {\bf 88} (1968), 492--517. 

\bibitem{silverman}
   {\sc J.\ Silverman}, The arithmetic of elliptic curves, 
  {\em Grad.\ Texts in Math.}  {\bf 106}, Springer, New York, 1986.

\bibitem{silverman2}
   {\sc J.\ Silverman}, Advanced topics in the arithmetic of elliptic curves, 
  {\em Grad.\ Texts in Math.}  {\bf 151}, Springer, New York, 1994.
  
 \bibitem{StollChabauty}
   {\sc M.\ Stoll}, Independence of rational points on twists of a given curve,
   {\em Compos.\ Math.} {\bf 142} (2006), 1201--1214. 

\bibitem{StollXdyn06}
  {\sc M.\ Stoll},
  {\it Rational 6-cycles under iteration of quadratic polynomials},
  London Math.~Soc. J.~Comput.~Math. {\bf 11} (2008), 367--380.

\bibitem{velu}
  {\sc J.\ V\'elu}, Isog\'enies entre courbes elliptiques,  
  {\em C.\ R.\ Acad.\ Sci.\ Paris S\'er. A-B} {\bf 273} (1971), A238--A241.

\bibitem{washington}
  {\sc L.\ Washington},  Abelian number fields of small degree, in Algebra and 
  Topology (Taejon, 1990), Korea Adv.\ Inst.\ Sci.\ Tech., Taejon (1990), 63--78.

\end{thebibliography}
\end{document}